\DeclareFontFamily{U}{mathb}{\hyphenchar\font45}
\DeclareFontShape{U}{mathb}{m}{n}{
      <5> <6> <7> <8> <9> <10> gen * mathb
      <10.95> mathb10 <12> <14.4> <17.28> <20.74> <24.88> mathb12
      }{}
\DeclareSymbolFont{mathb}{U}{mathb}{m}{n}
\let\dot\relax
\DeclareMathAccent{\dot}{0}{mathb}{"39}
\let\ddot\relax
\DeclareMathAccent{\ddot}{0}{mathb}{"3A}
\let\dddot\relax
\DeclareMathAccent{\dddot}{0}{mathb}{"3B}
\let\ddddot\relax
\DeclareMathAccent{\ddddot}{0}{mathb}{"3C}
\theoremstyle{plain}
\newtheorem*{theorem*}{Theorem}
\newtheorem{theorem}{Theorem}[section]
\newtheorem*{lemma*}{Lemma}
\newtheorem{lemma}[theorem]{Lemma}
\newtheorem*{proposition*}{Proposition}
\newtheorem{proposition}[theorem]{Proposition}
\newtheorem*{corollary*}{Corollary}
\newtheorem{corollary}[theorem]{Corollary}
\newtheorem*{claim*}{Claim}
\newtheorem*{conjecture*}{Conjecture}
\newtheorem*{question*}{Question}
\theoremstyle{definition}
\newtheorem*{definition*}{Definition}
\newtheorem{definition}[theorem]{Definition}
\newtheorem*{example*}{Example}
\newtheorem{example}[theorem]{Example}
\newtheorem*{algorithm*}{Algorithm}
\newtheorem*{remark*}{Remark}
\newtheorem*{remarks*}{Remarks}
\newtheorem{remark}[theorem]{Remark}
\newtheorem*{convention*}{Convention}
\numberwithin{equation}{section}
\def\al{\alpha}
\def\be{\beta}
\def\ga{\gamma}
\def\de{\delta}
\def\ep{\epsilon}
\def\et{\eta}
\def\th{\theta}
\def\vt{\vartheta}
\def\la{\lambda}
\def\rh{\rho}
\def\si{\sigma}
\def\ta{\tau}
\def\ph{\phi}
\def\vh{\varphi}
\def\ps{\psi}
\def\om{\omega}
\def\Ga{\Gamma}
\def\Si{\Sigma}
\def\N{\mathbb{N}}
\def\R{\mathbb{R}}
\def\cB{\mathcal{B}}
\def\cC{\mathcal{C}}
\def\cJ{\mathcal{J}}
\def\fM{\mathfrak{M}}
\def\fN{\mathfrak{N}}
\def\fT{\mathfrak{T}}
\def\fW{\mathfrak{W}}
\def\p{\partial}
\def\<{\langle}
\def\>{\rangle}
\def\supp{\on{supp}}
\let\on=\operatorname
\newcommand{\sr}[1]%
{\ifmmode{}^\dagger\else${}^\dagger$\fi\ifvmode
\vbox to 0pt{\vss
 \hbox to 0pt{\hskip\hsize\hskip1em
 \vbox{\hsize3cm\raggedright\pretolerance10000
 \noindent #1\hfill}\hss}\vss}\else
 \vadjust{\vbox to0pt{\vss%
 \hbox to 0pt{\hskip\hsize\hskip1em%
 \vbox{\hsize3cm\raggedright\pretolerance10000%
 \noindent #1\hfill}\hss}\vss}}\fi%
}
\def\RR{\mathbb R}
\def\NN{\mathbb N}
\def\A{\;\forall}
\def\E{\;\exists}
\providecommand{\mapsfrom}{\kern.2em%
\setbox0=\hbox{$\leftarrow$\kern-.10em\rule[0.26mm]{0.1mm}{1.3mm}}\box0%
\kern.3em}
\title[Extension of Whitney jets of controlled growth]
{Extension of Whitney jets of controlled growth}
\author[A.~Rainer]{Armin Rainer}
\address{A.~Rainer: 
Fakult\"at f\"ur Mathematik, Universit\"at Wien, 
Oskar-Morgenstern-Platz~1, A-1090 Wien, Austria}
\email{armin.rainer@univie.ac.at}
\author[G.~Schindl]{Gerhard Schindl}
\address{G.~Schindl: Fakult\"at f\"ur Mathematik, Universit\"at Wien, 
Oskar-Morgenstern-Platz~1, A-1090 Wien, Austria}
\email{gerhard.schindl@univie.ac.at}
\begin{document}

\begin{abstract}
   We revisit Whitney's extension theorem in the ultradifferentiable Roumieu setting. 
   Based on the description of ultradifferentiable classes by weight matrices, we extend results 
   on how growth constraints on Whitney jets on arbitrary compact subsets in $\R^n$ are 
   preserved by their extensions to $\R^n$. 
   More precisely, for any admissible class $\cC$ of ultradifferentiable functions on $\R^n$
   we determine a class $\cC'$ such that 
   all ultradifferentiable Whitney jets of class $\cC'$ on arbitrary compact subsets admit 
   extensions in $\cC$. The class $\cC'$ can be explicitly computed from $\cC$.  
\end{abstract}

\thanks{Supported by FWF-Projects P~26735-N25 and J 3948-N35}
\keywords{Whitney extension theorem in the ultradifferentiable setting, Roumieu type classes}
\subjclass[2010]{26E10, 30D60, 46E10}
\date{\today}

\maketitle

\section{Introduction}

Whitney's classical extension theorem \cite{Whitney34a} provides conditions for the extension of jets defined in 
a closed subset of $\R^n$ to infinitely differentiable functions on $\R^n$. 
The present paper focuses on the question how growth constraints on the jets are preserved by their extension:
Let $E \subseteq \R^n$ be any compact subset and let $M = (M_k)$ be a positive sequence.  
A jet $F= (F^\al)_{\al} \in C^0(E,\R)^{\N^n}$ is said to be a \emph{Whitney jet of class $\cB^{\{M\}}$} if 
there exist $C,\rh>0$ such that 
\begin{gather}
  |F^\al(a)| \le C \rh^{|\al|} \,  M_{|\al|}, \quad \al \in \N^n,~ a \in E,
   \label{jets10}
  \\
  |(R^p_a F)^\al(b)| \le C \rh^{p+1} \, M_{p+1}\,  \frac{|b-a|^{p+1-|\al|}}{(p+1-|\al|)!}, \quad p \in \N,\, |\al| \le p,~ a,b \in E,  
  \label{jets20}
\end{gather}
where $(R^p_a F)^\al (x) := F^\al(x) - \sum_{|\be| \le p-|\al|} \be!^{-1} (x-a)^\be F^{\al+\be}(a)$.
Characterize the sequences $N=(N_k)$ with the property that 
every Whitney jet $F=(F^\al)_\al$ of class $\cB^{\{M\}}$ on $E$ admits an 
extension $f \in C^\infty(\R^n)$ such that there exist $\rh>0$ and $C\ge 1$ with  
\begin{equation}
  |f^{(\al)}(x)| \le C \rh^{|\al|} N_{|\al|},  \quad \al \in \N^n,~ x \in \R^n. 
\end{equation}
We denote the space of all such functions by $\cB^{\{N\}}(\R^n)$.

There is a vast literature on this problem and its variations. 
The problem as formulated was solved by Chaumat and Chollet \cite{ChaumatChollet94}: under the assumptions
\begin{enumerate}
  \item $M_k/k!$ is logarithmically convex,
  \item $M$ has \emph{moderate growth}, i.e., $M_{j+k}\le C^{j+k} M_j M_k$ for all $j,k$ and some constant $C$, 
  \item $N_k/k!$ is logarithmically convex,
  \item $N$ is \emph{non-quasianalytic}, i.e., $\sum_k N_{k-1}/N_k < \infty$,
\end{enumerate}
all Whitney jets of class $\cB^{\{M\}}$ on any compact $E \subseteq \R^n$ have extensions in $\cB^{\{N\}}(\R^n)$ if and only if 
\begin{equation} \label{condition}
  \sum_{\ell \ge k} \frac{N_{\ell-1}}{N_\ell} \lesssim \frac{k M_{k-1}}{M_k}.   
\end{equation} 
(For real valued functions $f$ and $g$ we write $f \lesssim g$ if and only if $f \le C g$ for some positive constant $C$.
We write $f \sim g$ if and only if  $f \lesssim g$ and $g \lesssim f$.)

Our main goal was to prove an analogous extension theorem without the rather restrictive assumptions of log-convexity and 
moderate growth. We were motivated by the fact that the related ultradifferentiable classes introduced 
by Beurling \cite{Beurling61} and Bj\"orck \cite{Bjoerck66} (see also Braun, Meise, and Taylor \cite{BMT90}) 
which are described by weight functions $\om$ can be equivalently represented by one parameter 
families $\fW = \{W^x\}_{x>0}$ of weight sequences (so-called \emph{weight matrices}) associated with $\om$; 
see \cite{RainerSchindl12}. 
The sequences $W^x$ typically do not have moderate 
growth and $W^x_k/k!$ is not log-convex.

We managed to completely dispense from the log-convexity condition and to replace the moderate growth assumption by 
some weaker conditions which are satisfied by weight matrices $\fW = \{W^x\}_{x>0}$ associated with suitable 
weight functions $\om$.
In fact, we replace the single sequence $N$ in the above problem 
by an \emph{admissible} weight matrix $\fN$ which incorporates these
weaker conditions and define the \emph{descendant} $M$ of $N \in \fN$ (see \ref{descendant}) 
which turns out to satisfy (1) and to be 
maximal with property \eqref{condition}. Our main result, Theorem \ref{extensiontheorem}, 
states that, for every descendant $M$ of $\fN$, 
all Whitney jets of class $\cB^{\{M\}}$ on any compact $E \subseteq \R^n$ have extensions in 
$\bigcup_{N \in \fN} \cB^{\{N\}}(\R^n)$. 
By a standard partition of unity argument, this can be generalized to arbitrary closed subsets of $\R^n$; therefore 
we restrict to compact sets.  
Combining our theorem with a result of Schmets and Valdivia \cite{SchmetsValdivia04}, 
we obtain a characterization of the extension property in the special case that the class is preserved by the extension:
all Whitney jets of class $\bigcup_{N \in \fN} \cB^{\{N\}}$ on any compact $E \subseteq \R^n$ have extensions in 
$\bigcup_{N \in \fN} \cB^{\{N\}}(\R^n)$ if and only if for each $M \in \fN$ there is $N \in \fN$ such that \eqref{condition}
holds; see Theorem \ref{characterization}.

Our main theorem generalizes the result of Chaumat and Chollet \cite{ChaumatChollet94}. Moreover, we 
reprove the extension theorem of Bonet, Braun, Meise, and Taylor \cite{BBMT91} for \emph{admissible} weight functions $\om$ 
by different methods. Beyond that, we deduce an extension result in the mixed weight function setting, see Corollary 
\ref{cor:mixed}, 
which to our knowledge  
was so far only considered in the special cases that either $E = \{0\}$ (by Bonet, Meise, and Taylor \cite{BonetMeiseTaylor92}) 
or that $E$ is convex with non-empty interior 
(by Langenbruch \cite{Langenbruch94}). 
Our method builds on the approach of Chaumat and Chollet \cite{ChaumatChollet94} who in turn 
combined the construction of optimal partitions of unity of Bruna \cite{Bruna80} with an extension procedure 
due to Dynkin \cite{Dynkin80}. 
This approach is quite direct and reproves the result for the special case $E = \{0\}$. 
By contrast, \cite{BBMT91} follows more closely Bruna's observation that the extension theorem for arbitrary compact sets $E$ 
is essentially a consequence of the result for $E = \{0\}$ and the existence of special cut-off functions   
which, in \cite{BBMT91}, are constructed using H\"ormanders $\overline \p$-method; the case $E = \{0\}$ 
for weight functions was treated by Bonet, Meise, and Taylor \cite{BonetMeiseTaylor89} and \cite{BonetMeiseTaylor92}. 
We do not know whether the approach of \cite{BBMT91} can be adapted to the mixed weight function setting.

In this paper we exclusively consider Roumieu type spaces.

\bigskip

\section{Weights}

\subsection{Weight sequences} \label{weights}

Let $\mu = (\mu_k)$ be a positive increasing sequence,  
$1 = \mu_0 \le \mu_1 \le \mu_2 \le \cdots$.
We associate the sequence $M=(M_k)$ given by 
\begin{equation} \label{def:M}
  M_k := \mu_0 \mu_1 \mu_2 \cdots \mu_k, 
\end{equation}
and the sequence $m = (m_k)$ defined by 
\begin{equation}
   M_k =: k!\, m_k.
\end{equation} 
We call $M$ a \emph{weight sequence} if $M_k^{1/k} \to \infty$. 

\begin{remark}
  We wish to warn the reader that some authors (e.g.\ \cite{ChaumatChollet94}, \cite{RainerSchindl12}) 
  prefer to work with \emph{``sequences without factorials''}, that is 
  $m_k$ instead of $M_k$. Consequently, conditions on weight sequences sometimes look slightly different 
  depending on the used convention. 
\end{remark}

Note that $\mu$ uniquely determines $M$ and $m$, and vice versa. In analogy we shall use 
$\nu \leftrightarrow N \leftrightarrow n$, $\si \leftrightarrow S \leftrightarrow s$, etc.
That $\mu$ is increasing means precisely that $M$ is logarithmically convex (log-convex for short). 
Log-convexity of $m$ is a stronger condition.

Since $\mu$ is increasing, \eqref{def:M} entails  
\begin{equation}\label{mucompare}
\A k\in \N_{>0} : M_k^{1/k} \le \mu_k,
\end{equation}
or equivalently, $M_k^{1/k}$ is increasing. Another consequence is $M_jM_k\le M_{j+k}$ for all $k,j$.

A weight sequence $M$ is called \emph{non-quasianalytic} if $\sum_k 1/\mu_k < \infty$; by the Denjoy--Carleman theorem 
(e.g.\ \cite[Theorem 1.3.8]{Hoermander83I})
this is the case if and only if the associated class of ultradifferentiable functions contains non-trivial elements 
with compact support.

Two weight sequences $M$ and $N$ are said to be \emph{equivalent} if 
$M_k^{1/k} \sim N_k^{1/k}$; a sufficient condition for this is $\mu \sim \nu$.
This means that the associated classes of ultradifferentiable functions coincide; see Section \ref{DC}.

\subsection{Associated functions} \label{hGaSi}

The following facts are well-known; we refer to \cite{Mandelbrojt52} and \cite{ChaumatChollet94}.
With a weight sequence $M$ we associate the function  
\begin{equation} \label{h}
  h_M(t) := \inf_{k \in \N} M_k t^k, \quad t > 0, \quad h_M(0):=0.
\end{equation}
Then $h_M$ is increasing, continuous, and positive for $t>0$. For $t \ge 1/\mu_1$ we have $h_M(t) = 1$. From $h_M$ 
we may recover the sequence $M$ by $M_k = \sup_{t>0} t^{-k} h_M(t)$.
We associate the counting function $\Ga_M$ by setting 
\begin{equation} \label{counting2}
  \Ga_M(t) := \min\{k : h_M(t) =  M_k t^k\} = \min\{k : \mu_{k+1} \ge t^{-1} \};
\end{equation}
for this identity we need that $\mu$ is increasing. Then:
\begin{gather}
	\text{$k \mapsto M_k t^k$ is decreasing for $k \le \Ga_M(t)$,} \label{GaProp1}
	\\
	\text{$h_M(t) = M_{\Ga_M(t)} t^{\Ga_M(t)} \le M_k t^k$ for all $k$.} \label{GaProp2} 
\end{gather}
We shall also use  
\begin{align} \label{counting}
  \Si_M(t) := |\{k \ge 1 : \mu_k \le t\}| = \max \{k : \mu_k \le t\}.
\end{align}
Note that 
\begin{equation*} 
  \Ga_M(t^{-1}) \le \Si_M(t) \text{ for all } t>0, \text{ and } \Ga_M(t^{-1}) = \Si_M(t) \text{ if } t \not \in \{\mu_k\}_k.
\end{equation*}
It is well-known (cf.\ \cite{Mandelbrojt52} and \cite{Komatsu73}) that $\om_M$ defined by $h_M(t) = \exp(-\om_M(1/t))$ satisfies
\begin{equation} \label{omM}
  \om_M(t) = \int_{0}^t\frac{\Si_M(u)}{u} \,du.
\end{equation}

\subsection{Moderate growth}

A weight sequence $M$ is said to have \emph{moderate growth} if the equivalent conditions of 
the following lemma are satisfied; some authors call it  
\emph{stability under ultradifferentiable operators}, e.g.\ \cite[(M.2)]{Komatsu73}.

\begin{lemma}\label{ChaumatCholletmod}
Let $M$ be a weight sequence. The following are equivalent:
\begin{enumerate}
\item[(0)] $\exists C\ge 1 \A j,k\in\N : m_{j+k}\le C^{j+k} m_j m_k$.
\item $\exists C\ge 1 \A j,k\in\N : M_{j+k}\le C^{j+k} M_j M_k$.
\item $\mu_k \lesssim M_k^{1/k}$. 
\item $\mu_{2k} \lesssim \mu_k$. 
\item $\exists C\ge 1 \A t >0 :  2\Si_M(t) \le \Si_M(C t)$.
\item $\exists C\ge 1 \A t\ge 0 : 2\om_M(t) \le \om_M(Ct) + C$.
\end{enumerate}
\end{lemma}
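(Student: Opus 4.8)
The plan is to establish the cycle (1) $\Rightarrow$ (2) $\Rightarrow$ (3) $\Rightarrow$ (1) among the three ``sequence'' conditions, then to link (3) $\Leftrightarrow$ (4) $\Leftrightarrow$ (5) through the counting function $\Si_M$ and its integral $\om_M$, and finally to dispose of the purely formal equivalence (0) $\Leftrightarrow$ (1). For the latter, I would only use $M_k = k!\, m_k$ together with the elementary bounds $1 \le \binom{j+k}{j}\le 2^{j+k}$: dividing $M_{j+k}\le C^{j+k}M_jM_k$ by $M_jM_k$ and rewriting everything through $m$ shows that (1) and (0) differ only by replacing $C$ by $2C$ (or vice versa), so they are equivalent.

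For the core cycle I would argue directly from $M_k = \mu_0\cdots\mu_k$. To get (1) $\Rightarrow$ (2), set $j=k$, so $M_{2k}\le C^{2k}M_k^2$; since $M_{2k}=M_k\,\mu_{k+1}\cdots\mu_{2k}\ge M_k\,\mu_{k+1}^k$, this yields $\mu_k\le\mu_{k+1}\le C^2 M_k^{1/k}$. For (2) $\Rightarrow$ (3), from $M_{2k}=M_k\prod_{i=k+1}^{2k}\mu_i\le \mu_k^k\mu_{2k}^k$ (using $M_k\le\mu_k^k$ and $\mu_i\le\mu_{2k}$) I get $M_{2k}^{1/2k}\le(\mu_k\mu_{2k})^{1/2}$, and feeding this into (2) gives $\mu_{2k}\le A^2\mu_k$. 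For (3) $\Rightarrow$ (1), assume $\mu_{2k}\le D\mu_k$ and take $j\ge k$ by symmetry; then $M_{j+k}/(M_jM_k)=\prod_{i=1}^k \mu_{j+i}/\mu_i$, and iterating the doubling estimate $\mu_{j+i}\le D^{\lceil\log_2((j+i)/i)\rceil}\mu_i\le D\,((j+i)/i)^{\log_2 D}\mu_i$ turns the product into $D^k\binom{j+k}{k}^{\log_2 D}\le (D^2)^{j+k}$, where I use $\prod_{i=1}^k\frac{j+i}{i}=\binom{j+k}{k}\le 2^{j+k}$.

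Next I would treat the counting function. The equivalence (3) $\Leftrightarrow$ (4) is just the statement that $\mu$ and $\Si_M(t)=\max\{k:\mu_k\le t\}$ are generalized inverses: putting $k=\Si_M(t)$ converts $\mu_{2k}\le D\mu_k$ into $\Si_M(Dt)\ge 2\Si_M(t)$, and putting $t=\mu_k$ converts it back (the values of $t$ with $\Si_M(t)=0$ being trivial). The implication (4) $\Rightarrow$ (5) then follows by integrating: substituting $u=Cv$ in $\om_M(Ct)=\int_0^{Ct}\Si_M(u)\,du/u$ and using $\Si_M(Cv)\ge 2\Si_M(v)$ gives $\om_M(Ct)\ge 2\om_M(t)$, which is even stronger than (5).

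The main obstacle is the converse (5) $\Rightarrow$ (4), where pointwise doubling of $\Si_M$ must be recovered from the merely ``logarithmically averaged'' doubling of $\om_M$. My plan is to pass to $\phi(s):=\om_M(e^s)$, a convex increasing function with nondecreasing derivative $\psi(s)=\Si_M(e^s)$, in which (5) reads $\phi(s+\kappa)\ge 2\phi(s)-A$ with $\kappa=\log C$ and $A=C$. Monotonicity of $\psi$ gives the difference-quotient bounds $\kappa\psi(s)\le\phi(s+\kappa)-\phi(s)\le\kappa\psi(s+\kappa)$, and iterating (5) gives $\phi(s+3\kappa)\ge 4\phi(s+\kappa)-3A$. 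Combining $\psi(s+4\kappa)\ge\kappa^{-1}(\phi(s+3\kappa)-A)$ with $\psi(s)\le\kappa^{-1}\phi(s+\kappa)$ then forces $\psi(s+4\kappa)\ge\big(4-4A/\phi(s+\kappa)\big)\psi(s)\ge 2\psi(s)$ as soon as $\phi(s+\kappa)\ge 2A$, i.e.\ for all $t\ge t_0$. The remaining bounded range $\mu_1\le t<t_0$, on which $\Si_M$ is bounded, and the trivial range $t<\mu_1$ where $\Si_M=0$, are absorbed by enlarging the constant $C$, using $\Si_M(t)\to\infty$. I expect the bookkeeping in this last step --- in particular producing a single constant that works uniformly for all $t>0$ --- to be the only delicate point; everything else reduces to a short computation.
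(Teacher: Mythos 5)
Your proposal is correct, and it takes a genuinely different---and notably more self-contained---route than the paper. The paper proves $(0)\Leftrightarrow(1)$, $(1)\Rightarrow(2)$, $(3)\Rightarrow(4)$, and $(4)\Rightarrow(5)$ directly (your arguments for these coincide with the paper's), but it closes the equivalence cycle by citation: $(2)\Rightarrow(1)$ is reduced via iteration to $M_{2k}\le C^{2k}M_k^2$ and then outsourced to a theorem of Matsumoto, $(1)\Leftrightarrow(3)$ is quoted from Matsumoto, and $(1)\Leftrightarrow(5)$ is quoted from Komatsu. You replace every one of these citations by an elementary argument: $(2)\Rightarrow(3)$ from $M_{2k}\le(\mu_k\mu_{2k})^k$ fed back into $(2)$; $(3)\Rightarrow(1)$ by iterating the doubling estimate, which is sound since $\mu_{j+i}\le D\,((j+i)/i)^{\log_2 D}\mu_i$ and $\binom{j+k}{k}^{\log_2 D}\le D^{j+k}$ give the bound $D^{j+2k}\le(D^2)^{j+k}$; both directions of $(3)\Leftrightarrow(4)$ via the generalized-inverse relations $k=\Si_M(t)$ and $t=\mu_k$ (the paper only needs, and only proves, one direction); and, most substantially, $(5)\Rightarrow(4)$ via convexity of $\phi(s)=\om_M(e^s)$ with nondecreasing derivative $\psi(s)=\Si_M(e^s)$. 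I checked that last argument in detail: the chain $\kappa\psi(s+4\kappa)\ge\phi(s+3\kappa)-A\ge 4\phi(s+\kappa)-4A$ together with $\kappa\psi(s)\le\phi(s+\kappa)$ does yield $\psi(s+4\kappa)\ge 2\psi(s)$ once $\phi(s+\kappa)\ge 2A$, and the remaining ranges are indeed absorbed by enlarging the constant (take $\max\{C^4,\mu_{2K}/\mu_1\}$ with $K=\Si_M(t_0)$), using that $\mu_k\to\infty$ and $\Si_M(t)\to\infty$, both guaranteed for a weight sequence by \eqref{mucompare}. What each approach buys: the paper's proof is short and leans on the literature; yours is fully self-contained, produces explicit constants, and in particular gives a direct proof of the nontrivial upgrade from the logarithmically averaged doubling of $\om_M$ in $(5)$ to the pointwise doubling of $\Si_M$ in $(4)$, which the paper obtains only through Komatsu's result.
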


\begin{proof}
  Most of this is well-known. $(0) \Leftrightarrow (1)$ since $1 \le \binom{k+j}{j} \le 2^{k+j}$.
  For $(1) \Leftrightarrow (3)$ see \cite[Appendix B]{Matsumoto87} and for
  $(1) \Leftrightarrow (5)$ see \cite[Proposition 3.6]{Komatsu73}.

  $(1) \Rightarrow (2)$ We have $\mu_{k}^k \le  \mu_{k+1}^k\le\mu_{k+1}\cdots\mu_{2k}= M_{2k}/M_k\le C^{2k}M_k$.

  $(2) \Rightarrow (1)$ Note that $\mu_k \le C\, M_k^{1/k}$ if and only if $M_k^{1/k}\le C^{1/(k-1)}\,M_{k-1}^{1/(k-1)}$. 
  By iteration,  
  $M_{2k}^{1/(2k)}\le  C\, M_k^{1/k}$ and thus $M_{2k}\le C^{2k} M_k^2$. By \cite[Theorem 1]{Matsumoto84}, this implies $(1)$.

  $(3) \Rightarrow (4)$ follows from the definition of $\Si_M$.

  $(4)  \Rightarrow (5)$ By \eqref{omM},
  \begin{equation*}
  2\omega_M(t) = \int_{0}^t\frac{2 \Si_M(s)}{s}ds\le\int_{0}^t\frac{\Si_M(Cs)}{s}ds = \int_{0}^{Ct}\frac{\Si_M(\sigma)}{\sigma}d\sigma=\omega_M(Ct).
  \qedhere
  \end{equation*} 
\end{proof}

\begin{remark}
  In \cite{ChaumatChollet94} and \cite{ChaumatChollet98} a weight sequence is said to have moderate growth 
  if it satisfies $(1)$ and 
  $M_{k+1}^k \le C^k \, M_k^{k+1}$ for all $k$. It is easy to see that the latter condition is equivalent to $(2)$; 
  so it is superfluous by the 
  lemma.  
\end{remark}

The proof of the lemma shows that $M$ having moderate growth  
is also equivalent to $\mu_{k+1} \lesssim M_k^{1/k}$, and with \eqref{mucompare} we obtain,
\begin{equation} \label{wmg}
   \mu_{k+1} \lesssim \mu_k.  
\end{equation} 
This condition means that the sequence $\log M_k$ is \emph{almost concave} in the sense that there is a constant $C>0$ such that 
\begin{equation*}
  \log M_{k-1} + \log M_{k+1} \le 2 \log M_k + C
\end{equation*}
for all $k$.   
There are weight sequences of non-moderate growth that satisfy \eqref{wmg}, 
for instance, $M_k= A^{k^p}$, where $A>1$ and $0< p \le 2$; cf.\ Section \ref{example}.

The weight sequences associated with a weight function $\om$ (see Section \ref{associatedmatrix}) 
do in general not have moderate growth
(nor is the weaker condition \eqref{wmg} guaranteed); cf.\ Section \ref{example}.
However, we shall see in Lemma \ref{lemma4} below that all associated sequences \emph{together} fulfill a 
\emph{moderate growth condition}.  
In the next lemma we 
show how this condition translates to the functions $h_M$ and $\Ga_M$. 
  
\begin{lemma} \label{claim1}
  Let $M=(M_k)$ and $\dot M=(\dot M_k)$ be weight sequences such that
  \begin{equation} \label{eq31}
    \mu_{2k} \lesssim \dot \mu_k.   
  \end{equation}
  Then
  \begin{gather}
      \E C \ge 1 \A t >0 : h_M(t) \le h_{\dot M}(Ct)^2, \label{hmg}
      \\
      \E \la < 1 \A t>0 : 2  \Ga_{\dot M}(t) \le \Ga_M(\la t).   \label{eq32}
  \end{gather}
\end{lemma}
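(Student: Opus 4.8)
The plan is to distill the hypothesis $\mu_{2k}\lesssim\dot\mu_k$ into a single product estimate on the underlying sequences and then read both conclusions off the definitions \eqref{h} and \eqref{counting2}. Fix $C\ge 1$ with $\mu_{2k}\le C\dot\mu_k$ for all $k$; enlarging $C$ if necessary we may assume $C>1$. The estimate I want is
\begin{equation*}
  M_{2k}\le C^{2k}\,\dot M_k^2,\qquad k\in\N .
\end{equation*}
To get it I would pair consecutive factors: since $\mu$ is increasing, $\mu_{2j-1}\mu_{2j}\le\mu_{2j}^2\le C^2\dot\mu_j^2$ for every $j\ge 1$, and multiplying these for $j=1,\dots,k$ (using $\mu_0=\dot\mu_0=1$) collects the left-hand sides into $M_{2k}$ and the right-hand sides into $C^{2k}\dot M_k^2$.

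For \eqref{hmg} I would restrict the infimum defining $h_M$ to even indices. For every $k$,
\begin{equation*}
  h_M(t)\le M_{2k}\,t^{2k}\le C^{2k}\dot M_k^2\,t^{2k}=\big(\dot M_k\,(Ct)^k\big)^2 ,
\end{equation*}
so $\sqrt{h_M(t)}\le\dot M_k\,(Ct)^k$ for all $k$. Taking the infimum over $k$ gives $\sqrt{h_M(t)}\le h_{\dot M}(Ct)$, which is \eqref{hmg} with this same $C$.

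For \eqref{eq32} I would work directly from $\Ga_M(t)=\min\{k:\mu_{k+1}\ge t^{-1}\}$ and set $\la:=1/C<1$. Fix $t>0$ and write $m:=\Ga_{\dot M}(t)$. If $m=0$ the inequality is trivial, so assume $m\ge 1$. Minimality of $m$ forces $\dot\mu_m<t^{-1}$, and then the hypothesis gives $\mu_{2m}\le C\dot\mu_m<Ct^{-1}=(\la t)^{-1}$. Because $\mu$ is increasing, $\mu_{k+1}\le\mu_{2m}<(\la t)^{-1}$ for every $k\le 2m-1$, so no index below $2m$ realizes $\Ga_M(\la t)$; hence $\Ga_M(\la t)\ge 2m=2\Ga_{\dot M}(t)$.

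I do not anticipate a serious obstacle: once the pairing estimate $M_{2k}\le C^{2k}\dot M_k^2$ is available, the factor $2$ in its exponent is precisely what supplies the square in \eqref{hmg} and the doubling in \eqref{eq32}, so both halves reduce to bookkeeping. The only points needing a little care are the trivial case $m=0$, keeping the inequalities in the $\Ga$-argument strict so that the index $2m$ is genuinely excluded from below, and noting at the outset that $C>1$ may be assumed so that $\la=1/C$ satisfies $\la<1$.
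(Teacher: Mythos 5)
Your proof is correct, but for \eqref{hmg} it takes a genuinely different route than the paper. The paper derives \eqref{hmg} by passing through the counting function: condition \eqref{eq31} gives $2\Si_{\dot M}(t)\le\Si_M(Ct)$ via \eqref{counting}, this is integrated using the representation \eqref{omM} to get $2\om_{\dot M}(t)\le\om_M(Ct)$, and that inequality is then translated back into the statement about $h$; the paper handles \eqref{eq32} by an analogous (unwritten) counting argument. You instead prove the explicit product estimate $M_{2k}\le C^{2k}\dot M_k^2$ by pairing consecutive factors $\mu_{2j-1}\mu_{2j}\le C^2\dot\mu_j^2$, and then obtain \eqref{hmg} by restricting the infimum in \eqref{h} to even indices; note that your estimate is exactly the diagonal case $j=k$ of condition \eqref{mg0} from Remark \ref{rem:claim1}, and your argument shows the diagonal case already implies \eqref{hmg} without the log-convex minorant lemma of Komatsu that the remark uses for the general implication. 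Your treatment of \eqref{eq32} is essentially the paper's counting argument, just carried out explicitly from \eqref{counting2}, with the correct attention to the strictness $\dot\mu_m<t^{-1}$ coming from minimality and to the trivial case $m=0$. What your route buys: it is self-contained (no appeal to the integral formula \eqref{omM}) and gives explicit constants, the same $C$ working in \eqref{hmg} and $\la=1/C$ in \eqref{eq32}. What the paper's route buys: brevity, and it isolates the structural fact that \eqref{eq31} doubles the counting function $\Si$, which is the form in which such conditions appear elsewhere in the paper (e.g.\ Lemma \ref{ChaumatCholletmod}).
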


\begin{proof}
  Condition \eqref{eq31} implies $2 \Si_{\dot M}(t) \le \Si_M (Ct)$ for all $t>0$ and some $C \ge 1$, by \eqref{counting}.
  Using \eqref{omM}, we obtain $2\omega_{\dot M}(t) \le \omega_M(Ct)$ for all $t>0$, which is clearly equivalent to 
  \eqref{hmg}.
  Similarly, \eqref{eq31} implies $2 \Ga_{\dot M}(Ct) \le \Ga_M (t)$ for all $t>0$, i.e., 
  \eqref{eq32}.
\end{proof}

\begin{remark} \label{rem:claim1}
  Note that \eqref{hmg} is equivalent to 
  \begin{equation}
    \E C \ge 1 \A k,j \in \N : M_{k+j} \le C^{k+j} \dot M_j \dot M_{k}. \label{mg0}
  \end{equation}
  In fact, that \eqref{hmg} implies \eqref{mg0} was shown in \cite[Proposition 3.6]{Schindl15}.
  For the opposite direction, note that $N_k := \min_{0 \le j \le k} \dot  M_j  \dot M_{k-j}$ is log-convex, 
  by \cite[Lemma 3.5]{Komatsu73}. By \eqref{mg0}, $M_k \le C^k N_k$ for all $k$, and thus
  $h_{M}(t) =   \inf_{k}  M_k t^k \le \inf_{k}  N_k (Ct)^k =  h_{N}(C t)$.
  Now \eqref{hmg} follows from the fact that
  $h_{N} = (h_{\dot M})^2$, see \cite[Lemma 3.5]{Komatsu73}.
\end{remark}

\subsection{Weight functions} \label{weightfunction}

A \emph{weight function} is a continuous increasing function $\om : [0,\infty) \to [0,\infty)$ with $\om|_{[0,1]} =0$ and 
$\lim_{t \to \infty} \om(t) = \infty$ that satisfies
\begin{align}
   & \om(2t) = O(\om(t)) \quad\text{ as } t \to \infty, \label{om1}\\
   & \om(t) = O(t) \quad\text{ as } t \to \infty, \label{om2}\\
   & \log t = o(\om(t)) \quad\text{ as } t \to \infty, \label{om3}\\
   & \vh(t) := \om(e^t) \text{ is convex}.  \label{om4}
\end{align}
For a weight function $\om$ we consider the \emph{Young conjugate} $\vh^*$ of $\vh$,
\[
  \vh^*(x) := \sup_{y\ge 0} xy-\vh(y), \quad x \ge 0,
\] 
which is a convex increasing function satisfying $\vh^*(0)=0$, $\vh^{**}=\vh$, and $x/\vh^*(x) \to 0$ as $x \to \infty$; 
cf.\ \cite{BMT90}.

\subsection{The weight matrix associated with a weight function} \label{associatedmatrix}

With a weight function $\om$ we associate a \emph{weight matrix} $\fW = \{W^x\}_{x>0}$ by setting 
\begin{equation*}
  W^x_k := \exp(\tfrac{1}{x}\vh^*(x k)), \quad k \in \N;
\end{equation*}
cf.\ \cite[5.5]{RainerSchindl12}.
By the properties of $\vh^*$, each $W^x$ is a weight sequence (in the sense of Section \ref{weights}). 
Moreover, setting $\vt^x_k := W^x_k/W^x_{k-1}$ we have $\vt^x \le \vt^y$ 
if $x \le y$, which entails $W^x \le W^y$.

\begin{lemma} \label{lemma4} 
  For all $x>0$ and all $k \in \N_{\ge 2}$,
  $\vt^x_{2k} \le   \vt^{4x}_{k}$.
\end{lemma}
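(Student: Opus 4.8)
The plan is to unfold all the definitions in terms of the Young conjugate $\vh^*$ and reduce the claimed inequality $\vt^x_{2k}\le \vt^{4x}_k$ to a convexity statement about $\vh^*$. Recall that $W^x_k = \exp(\tfrac1x \vh^*(xk))$, so by definition
\[
  \vt^x_{2k} = \frac{W^x_{2k}}{W^x_{2k-1}} = \exp\Bigl(\tfrac1x\bigl(\vh^*(2xk) - \vh^*(2xk - x)\bigr)\Bigr),
\]
\[
  \vt^{4x}_k = \frac{W^{4x}_k}{W^{4x}_{k-1}} = \exp\Bigl(\tfrac{1}{4x}\bigl(\vh^*(4xk) - \vh^*(4xk - 4x)\bigr)\Bigr).
\]
Taking logarithms, the statement becomes the purely real-variable inequality
\[
  \tfrac1x\bigl(\vh^*(2xk) - \vh^*(2xk - x)\bigr) \le \tfrac{1}{4x}\bigl(\vh^*(4xk) - \vh^*(4xk - 4x)\bigr),
\]
i.e.\ after multiplying through by $4x$,
\[
  4\bigl(\vh^*(2xk) - \vh^*(2xk - x)\bigr) \le \vh^*(4xk) - \vh^*(4xk - 4x).
\]

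**Next I would** interpret each side as a difference of $\vh^*$ over an interval and exploit the convexity of $\vh^*$. Writing $a := 2xk$, the left-hand side is $4$ times the increment of $\vh^*$ over the interval $[a-x, a]$ of length $x$, while the right-hand side is the increment over $[2a - 4x, 2a]$, an interval of length $4x$. Since $\vh^*$ is convex (hence has increasing difference quotients), and the interval $[2a-4x,2a]$ lies entirely to the right of $[a-x,a]$ whenever $k\ge 2$ (which guarantees $2a - 4x = 4xk - 4x \ge 4x \ge a - x$, so the right interval starts no earlier than the left one ends), each of the four unit-$x$ subintervals partitioning $[2a-4x,2a]$ contributes an increment at least as large as the single increment over $[a-x,a]$. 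Concretely, partitioning $[2a-4x, 2a]$ into the four consecutive intervals of length $x$ and applying monotonicity of the slope of the convex function $\vh^*$ to compare each piece with the increment over $[a-x,a]$ yields the factor of $4$.

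**The step I expect to require the most care** is verifying that the convexity comparison is applied to intervals in the correct order, i.e.\ that the leftmost of the four subintervals of $[2a-4x,2a]$ still lies to the right of $[a-x,a]$; this is exactly where the hypothesis $k\ge 2$ enters, and it is worth checking the boundary case $k=2$ explicitly to confirm the intervals abut rather than overlap in a way that breaks the slope comparison. Beyond that, the argument is essentially a one-line application of the fact that for a convex function the average rate of change over an interval is monotone in the position of the interval; the only subtlety is bookkeeping the dilation by the factor of $2$ in the argument (from $x$ to $4x$ the points double) against the factor of $4$ in front. I would present this by first reducing to the displayed inequality in $\vh^*$, then stating the convexity comparison cleanly, perhaps isolating it as the observation that $\vh^*(s+x) - \vh^*(s)$ is nondecreasing in $s$, and summing this over the four shifted copies.
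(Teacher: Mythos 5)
Your proof is correct and takes essentially the same route as the paper: reduce $\vt^x_{2k}\le\vt^{4x}_k$ to the inequality $\tfrac1x\bigl(\vh^*(2kx)-\vh^*((2k-1)x)\bigr)\le\tfrac1{4x}\bigl(\vh^*(4kx)-\vh^*(4(k-1)x)\bigr)$ and deduce it from the convexity of $\vh^*$; the paper leaves that last step as a one-line appeal to convexity, whereas you spell out the four-subinterval slope comparison. One small slip: the parenthetical chain $2a-4x=4xk-4x\ge 4x\ge a-x$ is false for $k\ge 3$ (the second inequality requires $k\le 5/2$), but the fact you actually need, namely $4x(k-1)\ge 2xk-x$ (indeed even $4x(k-1)\ge 2xk$), is immediate from $k\ge 2$, so the argument stands.
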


\begin{proof}
  The inequality $\vt^x_{2k} \le   \vt^{4x}_{k}$ is equivalent to
  \begin{equation*}
    \frac1x (\vh^*(2k x) - \vh^*((2k-1)x) ) \le \frac1{4x} (\vh^*(4k x) - \vh^*(4(k-1)x) ),    
  \end{equation*} 
  which follows from the convexity of $\vh^*$. 
\end{proof}

\section{Spaces of functions and jets}

\subsection{Ultradifferentiable functions} \label{DC}

Let $M=(M_k)$ be a weight sequence and $\rh >0$.
We consider the Banach space $\cB^{M}_\rh(\R^n) := \{f \in C^\infty (\R^n) : \|f\|^M_{\rh}< \infty\}$, where 
\[
  \|f\|^M_{\rh} := \sup_{x \in \R^n,\, \al \in \N^n} \frac{|\p^\al f(x)|}{\rh^{|\al|} M_{|\al|}},
\]
and the inductive limit 
\begin{equation} \label{DCclass}
  \cB^{\{M\}}(\R^n) := \on{ind}_{\rh \in \N} \cB^{M}_\rh(\R^n).  
\end{equation}
For weight sequences $M$ and $N$ we have $\cB^{\{M\}} \subseteq \cB^{\{N\}}$ if and only if 
$M_k^{1/k} \lesssim N_k^{1/k}$; one implication is obvious, the other follows from the existence of 
\emph{characteristic} $\cB^{\{M\}}$-functions, cf. \cite[Lemma 2.9 and Proposition 2.12]{RainerSchindl12}.

Let $\om$ be a weight function and $\rh >0$.  
We consider the Banach space $\cB^{\om}_\rh(\R^n) := \{f \in C^\infty (\R^n) : \|f\|^\om_{\rh}< \infty\}$, 
where  
\[
  \|f\|^\om_{\rh} := \sup_{x \in \R^n,\,\al \in \N^n} |\p^\al f(x)| \exp(-\tfrac{1}{\rh} \vh^*(\rh |\al|)),
\]
and the inductive limit 
\begin{equation}
  \cB^{\{\om\}}(\R^n) := \on{ind}_{\rh \in \N} \cB^{\om}_\rh(\R^n).
\end{equation}
For weight functions $\om$ and $\si$ we have $\cB^{\{\om\}} \subseteq \cB^{\{\si\}}$ if and only if
$\si(t) = O(\om(t))$ as $t \to \infty$;
cf.\ \cite[Corollary 5.17]{RainerSchindl12}.

The associated weight matrix $\fW$ allows us to describe any class $\cB^{\{\om\}}(\R^n)$ as 
a union of spaces of type \eqref{DCclass}:

\begin{theorem}[{\cite[Corollaries 5.8 and 5.15]{RainerSchindl12}}] \label{representation}
  Let $\om$ be a weight function and let $\fW = \{W^x\}_{x>0}$ be the associated weight matrix. 
  Then, as locally convex spaces,
  \begin{align*}
    \cB^{\{\om\}}(\R^n) &= \on{ind}_{x > 0} \cB^{\{W^x\}}(\R^n) = \on{ind}_{x > 0} \on{ind}_{\rh>0} \cB^{W^x}_\rh(\R^n).  
  \end{align*}
  We have $\cB^{\{\om\}}(\R^n) = \cB^{\{W^x\}}(\R^n)$ for all $x>0$ if and only if 
  \begin{align}
     \E H\ge 1 \A t\ge 0 : 
      2\om(t) \le \om(Ht) + H.   \label{om6}
  \end{align} 
  Moreover, \eqref{om6} holds if and only if some (equivalently each) $W^x$ has moderate growth. 
\end{theorem}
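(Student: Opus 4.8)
The plan is to prove Theorem \ref{representation} by establishing the three claimed facts in turn: the locally convex identification of $\cB^{\{\om\}}(\R^n)$ as an inductive limit over $x>0$, the equivalence of the single-sequence description with condition \eqref{om6}, and the equivalence of \eqref{om6} with moderate growth of the $W^x$. Since the first assertion is cited verbatim from \cite[Corollaries 5.8 and 5.15]{RainerSchindl12}, the real content to organize is the characterization via \eqref{om6}, and here the key bookkeeping device is the associated function $h_M$ together with the dictionary between seminorms and infima of the form $\inf_k M_k t^k$.

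First I would unwind the two seminorms involved. Using the definition $W^x_k = \exp(\tfrac1x \vh^*(xk))$, the weight-matrix seminorm $\|f\|^{W^x}_\rh$ and the weight-function seminorm $\|f\|^\om_\rh$ are tied together by the Young-conjugate identity: one has, after taking logarithms, that $\tfrac1x \vh^*(x|\al|)$ and $\vh^*(\rh|\al|)/\rh$ differ only through a rescaling of the parameter, so that $f \in \cB^{W^x}_\rh$ for some $\rh$ is equivalent to a bound of the form $|\p^\al f(x)| \le C \exp(\tfrac1x \vh^*(xc|\al|))$. The inclusion $\cB^{\{\om\}} \subseteq \cB^{\{W^x\}}$ for every fixed $x$ is then the assertion that the single weight sequence $W^x$ already dominates the whole scale, and by the inclusion criterion $\cB^{\{M\}} \subseteq \cB^{\{N\}} \Leftrightarrow M_k^{1/k} \lesssim N_k^{1/k}$ recalled in Section \ref{DC}, this reduces to comparing $(W^{y}_k)^{1/k}$ with $(W^x_k)^{1/k}$ for all $y$. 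The crux is thus to show that \eqref{om6} is exactly the self-improvement allowing $W^{2x}$, say, to be controlled by $W^x$ up to an equivalence of sequences.

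The main step is therefore the equivalence between \eqref{om6} and the moderate growth of the $W^x$, and I would route it through the associated functions. By Theorem \ref{representation}'s own statement we may use $h_{W^x}(t) = \exp(-\tfrac1x\vh^*$-type expressions$)$; concretely, the definition $h_M(t) = \inf_k M_k t^k$ together with $M_k = \sup_t t^{-k} h_M(t)$ lets me translate the sequence-level moderate-growth inequality $W^x_{j+k} \le C^{j+k} W^x_j W^x_k$ of Lemma \ref{ChaumatCholletmod}(1) into a subadditivity statement $h_{W^x}(t) \ge h_{W^x}(Ct)^2$, which is precisely the doubling condition \eqref{om1}–\eqref{om6} after passing from $h$ to $\om$ via $h_M(t) = \exp(-\om_M(1/t))$ and the integral formula \eqref{omM}. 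Here I would invoke Lemma \ref{claim1} and Remark \ref{rem:claim1} with $\dot M = M = W^x$: they give the exact equivalence between $h_{W^x}(t) \le h_{W^x}(Ct)^2$ and $W^x_{k+j} \le C^{k+j} W^x_j W^x_k$, i.e.\ between a doubling condition on $\om_{W^x}$ and moderate growth. It remains to compare $\om_{W^x}$ with the original $\om$, and by construction of the matrix these are equivalent weight functions, so their doubling conditions coincide; this is what matches Lemma \ref{ChaumatCholletmod}(6).

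The hard part will be the careful handling of the rescaling constants when moving between the parameter $x$ of the matrix and the dilation constant $H$ in \eqref{om6}: because $W^x_k = \exp(\tfrac1x\vh^*(xk))$, changing $x$ to $4x$ (as in Lemma \ref{lemma4}) corresponds to a controlled deterioration of the quotients $\vt^x_k$, and one must verify that the convexity of $\vh^*$ converts the abstract doubling $2\om(t) \le \om(Ht)+H$ into the comparison of $W^x$ with $W^{Hx}$ that makes the inductive limit collapse to a single step. I expect that Lemma \ref{lemma4}, which already packages the convexity estimate $\vt^x_{2k} \le \vt^{4x}_k$, is the tool that closes this gap cleanly, so that the statement $\cB^{\{\om\}} = \cB^{\{W^x\}}$ for all $x$ follows once \eqref{om6} is known to force $(W^{Hx}_k)^{1/k} \sim (W^x_k)^{1/k}$. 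Everything else is the routine translation between seminorm bounds and the $h_M$/$\Si_M$/$\om_M$ formalism assembled in Section \ref{hGaSi}.
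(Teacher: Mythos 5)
The paper never proves Theorem \ref{representation} at all: it imports the result wholesale from \cite[Corollaries 5.8 and 5.15]{RainerSchindl12}. So the only question is whether your sketch stands on its own, and it does not, for two concrete reasons. First, you discharge the first assertion ($\cB^{\{\om\}}(\R^n) = \on{ind}_{x>0}\cB^{\{W^x\}}(\R^n)$ as locally convex spaces) by declaring it ``cited verbatim'' from the reference --- but the entire theorem is that citation, and in a blind proof this identification is precisely what must be established; it is the hardest part. The missing ingredient is the convexity estimate recorded in the paper as \eqref{5.10}: for every $\rh>0$ there is $H\ge 1$ such that $\rh^k W^x_k \le C\, W^{Hx}_k$ for all $x$ and $k$. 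This is what lets one trade the geometric factor $\rh^{|\al|}$ in $\|\cdot\|^{W^x}_\rh$ against an increase of the matrix parameter, so that the double inductive limit over $(x,\rh)$ matches the single limit defining $\cB^{\{\om\}}$. Your phrase ``differ only through a rescaling of the parameter'' is exactly this lemma, and it requires the Young-conjugate computation (convexity of $\vh^*$ and $\vh^{**}=\vh$), not bookkeeping. Similarly, your step ``$\om_{W^x}$ and $\om$ are equivalent weight functions by construction'' is not a formality: it is a separate lemma of \cite{RainerSchindl12} (their Lemma 5.7), whose proof needs \eqref{om3} to pass from the supremum over integers $k$ implicit in the sequences $W^x$ back to $\vh^{**}=\vh$.

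Second, the tool you nominate for collapsing the inductive limit under \eqref{om6} cannot do that job. Lemma \ref{lemma4}, i.e.\ $\vt^x_{2k}\le \vt^{4x}_k$, holds for \emph{every} weight function with no hypothesis whatsoever, so it cannot characterize when $\cB^{\{\om\}}=\cB^{\{W^x\}}$ for all $x$; if it sufficed, the limit would always collapse, which is false --- the functions $\om_s$ of Section \ref{example} violate \eqref{om6} and their limits do not collapse. What the forward direction actually needs is: \eqref{om6} gives $2\vh(t)\le \vh(t+\log H)+H$, hence by conjugation $\vh^*(y)\le 2\vh^*(y/2)+y\log H+H$, hence $W^{2x}_k \le e^{H/(2x)}H^k\, W^x_k$, i.e.\ $(W^{2x}_k)^{1/k}\lesssim (W^x_k)^{1/k}$, and then one iterates and invokes the inclusion criterion of Section \ref{DC}; the converse again passes through $\om\sim\om_{W^x}$. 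Your outline for the equivalence of \eqref{om6} with moderate growth via Lemma \ref{claim1}, Remark \ref{rem:claim1}, and Lemma \ref{ChaumatCholletmod}(5)--(6) is sound in structure (modulo a reversed inequality: moderate growth corresponds to $h_{W^x}(t)\le h_{W^x}(Ct)^2$, not $\ge$), but the two unproved inputs above are where the real content of the theorem sits.
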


\begin{remark}
  Let us emphasize that the fact that $\cB^{\{\om\}} = \cB^{\{M\}}$ for some weight sequence $M$ 
  if and only if $\om$ satisfies \eqref{om6} is due to \cite{BMM07}.
\end{remark}

Motivated by this result we define a \emph{weight matrix} to be a  
family $\fM$ of weight sequences which is totally ordered with respect to the pointwise order relation on sequences, i.e., 
\begin{enumerate}
   \item $\fM \subseteq \R^\N$,
   \item each $M \in \fM$ is a weight sequence in the sense of Section \ref{weights},
   \item for all $M,\dot M \in \fM$ we have $M \le \dot M$ or $M \ge \dot M$. 
 \end{enumerate} 
For a weight matrix $\fM$  
we consider 
\begin{align*}
    \cB^{\{\fM\}}(\R^n) &:= \on{ind}_{M \in \fM} \cB^{\{M\}}(\R^n) = \on{ind}_{M \in \fM} \on{ind}_{\rh>0} \cB^{M}_\rh(\R^n). 
\end{align*}
For weight matrices $\fM$ and $\fN$ we have $\cB^{\{\fM\}} \subseteq \cB^{\{\fN\}}$ if and only if
$\A M \in \fM \E N \in \fN : M_k^{1/k} \lesssim N_k^{1/k}$; 
cf.\ \cite[Proposition 4.6]{RainerSchindl12}.

\subsection{Whitney jets of controlled growth}

Let $E$ be a compact subset of $\R^n$. 
We denote by $\cJ^\infty(E)$ the vector space of all jets $F= (F^\al)_{\al\in \N^n} \in C^0(E,\R)^{\N^n}$ on $E$. 
For $a \in E$ and $p \in \N$ we associate the Taylor polynomial
\begin{gather*}
  T^p_a : \cJ^\infty(E) \to C^\infty (\R^n,\R), ~ F \mapsto T^p_a F(x) := \sum_{|\al|\le p} \frac{(x-a)^\al}{\al!} F^\al(a),    
\end{gather*}
and the remainder $R^p_a F = ((R^p_a F)^\al)_{|\al| \le p}$ with
\begin{gather*}
  (R^p_a F)^\al (x) := F^\al(x) - \sum_{|\be| \le p-|\al|} \frac{(x-a)^\be}{\be!} F^{\al+\be}(a), \quad a,x \in E.      
\end{gather*}
Let us denote by 
$j^\infty_E$ the mapping which assigns to a $C^\infty$-function $f$ on $\R^n$
the jet $j^\infty_E(f) := (\p^\al f|_E)_\al$. 
By Taylor's formula, $F  =j^\infty_E(f)$ satisfies
\begin{equation*}\label{Whitneyjets}
  (R^p_a F)^\al (x) = o(|x-a|^{p-|\al|}) \quad \text{ for $a,x \in E$, $p\in \N$, $|\al| \le p$ as $|x-y|\to 0$.}
\end{equation*}
Conversely, if a jet $F \in \cJ^\infty(E)$ has this property, then it admits a 
$C^\infty$-extension to $\R^n$,
by Whitney's extension theorem \cite{Whitney34a} (for modern accounts see e.g.\ 
\cite[Ch.~1]{Malgrange67}, \cite[IV.3]{Tougeron72}, or 
\cite[Theorem 2.3.6]{Hoermander83I}).

\begin{definition}[Ultradifferentiable Whitney jets]
Let $E \subseteq \R^n$ be compact.
Let $M=(M_k)$ be a weight sequence.
For fixed $\rh>0$ we denote by $\cB^{M}_\rh(E)$ the set of all jets $F$ satisfying \eqref{jets10} and \eqref{jets20}; 
the smallest constant $C$ defines a complete norm on $\cB^{M}_\rh(E)$. We define 
$$\cB^{\{M\}}(E) := \on{ind}_{\rh \in \N} \cB^{M}_\rh(E).$$
An element of $\cB^{\{M\}}(E)$ is called a \emph{Whitney jet of class $\cB^{\{M\}}$ on $E$}.

Let $\fM$ be a weight matrix. 
A jet $F$ is said to be a \emph{Whitney jet of class $\cB^{\{\fM\}}$ on $E$} if $F \in \cB^{\{M\}}(E)$ 
for some $M \in \fM$; we set 
$$\cB^{\{\fM\}}(E)= \on{ind}_{M \in \fM} \cB^{\{M\}}(E) = \on{ind}_{M \in \fM} \on{ind}_{\rh>0} \cB^{M}_\rh(E).$$

Let $\om$ be a weight function and $\fW$ the associated weight matrix.
A jet $F$ is said to be a \emph{Whitney jet of class $\cB^{\{\om\}}$ on $E$} if $F \in \cB^{\{\fW\}}(E)$; 
the topology on $\cB^{\{\om\}}(E)$ is given by the identification 
$\cB^{\{\om\}}(E) = \cB^{\{\fW\}}(E)$. 
\end{definition}

\begin{remark}
  This definition of Whitney jet of class $\cB^{\{\om\}}$ on $E$ coincides with the one given in 
  \cite{BBMT91}. This follows from the fact that
  any weight matrix $\fW$ associated with a weight function has the following property: 
  \begin{equation} \label{5.10}
    \A \rh>0 \E H\ge 1 \A x >0 \E C \ge 1 \A k \in \N : \rh^k W^x_k \le C W^{Hx}_k;
  \end{equation}
  cf.\ \cite[Lemma 5.9]{RainerSchindl12}.
\end{remark}

\section{A special partition of unity}

\subsection{The descendant of a non-quasianalytic weight sequence} \label{descendant}

Following an idea in the proof of \cite[Proposition 1.1]{Petzsche88} we associate with any non-quasianalytic 
weight sequence $N$ a weight sequence $S$ with many good properties.

\begin{definition}[Descendant]
  Let $\nu=(\nu_k)$ be an increasing positive sequence with $\nu_0 =1$ and  $\sum_k 1/\nu_k < \infty$. Let us associate a 
positive sequence $\si = \si(\nu)$ in the following way. 
We define 
  \begin{equation} \label{tau}
    \ta_k := \frac{k}{\nu_k} + \sum_{j\ge k} \frac 1 {\nu_j}, \quad k \ge 1,
  \end{equation}
and set 
  \begin{equation} \label{sigma}
    \si_k := \frac{\ta_1 k}{\ta_k}, \quad k \ge 1, \quad \si_0 := 1.  
  \end{equation}
We say that $\si$ is \emph{the descendant of $\nu$}; we shall also say that 
$S_k=\si_0 \si_1 \cdots \si_k$ is the descendant of $N_k = \nu_0 \nu_1 \cdots \nu_k$.  
\end{definition}

We shall also use the abbreviations $\si^*_k := \si_k/k$ and $\nu^*_k := \nu_k/k$.

\begin{lemma} \label{lem:log-convex}
  Let $\si$ be the descendant of $\nu$. Then:
  \begin{enumerate}
    \item $\si \lesssim \nu$.
    \item $\sum_{j\ge k}  1 /\nu_j \lesssim k/\si_k$.
    \item $1 \le  \si^*_k$ is increasing to $\infty$. 
    \item $\si_{k+1} \lesssim \si_k$ if and only if 
    \begin{equation} \label{ugly}
     \E C>0 \A k \in \N : \frac{\nu_{k+1}}{\nu_k} \le (C+1) + C \nu_{k+1}^* \sum_{j \ge k+1} \frac 1{\nu_j},
    \end{equation}
    in particular, if $\nu_{k+1} \lesssim \nu_k$.
    \item If $\mu$ is an increasing positive sequence satisfying $\mu  \lesssim \nu$ 
    and $\sum_{j\ge k}  1 /\nu_j \lesssim k/\mu_k$, then $\mu \lesssim \si$, i.e., $\si$ is the 
    \emph{largest} sequence satisfying \thetag{1} and \thetag{2}. 
    \item Let $\dot \nu$ be another increasing positive sequence with $\sum_k 1/\dot \nu_k < \infty$ and $\dot \si$ 
    its descendant. Then $\nu_{2k} \lesssim \dot \nu_{k}$ implies $\si_{2k} \lesssim \dot \si_{k}$. 
  \end{enumerate} 
\end{lemma}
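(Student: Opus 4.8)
The plan is to reduce everything to the single auxiliary sequence $\ta=(\ta_k)$. Write $r_k := \sum_{j\ge k}1/\nu_j$ for the (finite) tail, so that $\ta_k = k/\nu_k + r_k$, and observe the clean identity
\[
  \si^*_k = \frac{\si_k}{k} = \frac{\ta_1}{\ta_k},\qquad k\ge 1,
\]
which converts every assertion about $\si$ into an assertion about $\ta$. First I would record the elementary features of $\ta$: using $r_k - r_{k+1}=1/\nu_k$, a telescoping computation gives
\[
  \ta_{k+1}-\ta_k=(k+1)\Big(\tfrac1{\nu_{k+1}}-\tfrac1{\nu_k}\Big)\le 0,
\]
so $\ta$ is decreasing (the inequality being exactly monotonicity of $\nu$); the same manipulation yields the rewriting $\ta_k=(k+1)/\nu_k+r_{k+1}$; and trivially $\ta_k\ge k/\nu_k$ and $\ta_k\ge r_k$.

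From here (1), (2), (3) and (5) are almost immediate. Part (1) follows from $\ta_k\ge k/\nu_k$, giving $\si_k=\ta_1 k/\ta_k\le\ta_1\nu_k$; part (2) follows from $\ta_k\ge r_k$ together with $k/\si_k=\ta_k/\ta_1$, giving $r_k\le\ta_1\,k/\si_k$. For (3), $\si^*_k=\ta_1/\ta_k\ge 1$ because $\ta$ is decreasing (so $\ta_k\le\ta_1$), and $\si^*_k$ is increasing for the same reason; the only genuine estimate in the whole lemma is $\si^*_k\to\infty$, i.e.\ $\ta_k\to0$. Since $r_k\to0$ automatically, this reduces to $k/\nu_k\to0$, which I would extract from the Cauchy criterion: bounding $1/\nu_j\ge1/\nu_k$ for $\lceil k/2\rceil\le j\le k$ gives $r_{\lceil k/2\rceil}-r_{k+1}\ge k/(2\nu_k)$, and the left side tends to $0$. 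For (5), if $\mu\le A\nu$ and $r_k\le Bk/\mu_k$ then $\ta_k=k/\nu_k+r_k\le(A+B)k/\mu_k$, whence $\si_k\ge\ta_1\mu_k/(A+B)$; since the hypotheses on $\mu$ are precisely (1) and (2) with $\si$ replaced by $\mu$, this is the asserted maximality.

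For (4) I would start from $\si_{k+1}/\si_k=\tfrac{k+1}{k}\,\ta_k/\ta_{k+1}$; as $\tfrac{k+1}{k}\in[1,2]$, the condition $\si_{k+1}\lesssim\si_k$ is equivalent to $\ta_k\le(C+1)\ta_{k+1}$ for some $C>0$. Substituting $\ta_k=(k+1)/\nu_k+r_{k+1}$ and $\ta_{k+1}=(k+1)/\nu_{k+1}+r_{k+1}$, subtracting, then multiplying by $\nu_{k+1}$ and dividing by $k+1$, turns this inequality verbatim into \eqref{ugly}; the special case $\nu_{k+1}\lesssim\nu_k$ is covered by taking $C$ so large that $C+1$ dominates the bounded ratio $\nu_{k+1}/\nu_k$. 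Part (6) runs parallel: $\si_{2k}/\dot\si_k=\tfrac{2\ta_1}{\dot\ta_1}\,\dot\ta_k/\ta_{2k}$, so it suffices to prove $\dot\ta_k\lesssim\ta_{2k}$. Assuming $\nu_{2k}\le A\dot\nu_k$, I would bound the two summands of $\dot\ta_k$ separately: $k/\dot\nu_k\le\tfrac A2(2k/\nu_{2k})\le\tfrac A2\ta_{2k}$, and $\dot r_k:=\sum_{j\ge k}1/\dot\nu_j\le A\sum_{j\ge k}1/\nu_{2j}\le A\,r_{2k}\le A\ta_{2k}$, the middle step holding because $\{2j:j\ge k\}\subseteq\{i:i\ge 2k\}$ with all terms positive.

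I do not anticipate a real obstacle: the identity $\si^*_k=\ta_1/\ta_k$ collapses the problem, and apart from the limit $k/\nu_k\to0$ in (3) everything is bookkeeping with the two lower bounds for $\ta_k$. The only points to watch are administrative — tracking which constant is which, and ensuring $C>0$ can be chosen in (4), which uses that $\nu$ cannot be eventually constant since $\sum_k1/\nu_k<\infty$.
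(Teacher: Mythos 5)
Your proof is correct and takes essentially the same route as the paper: everything is reduced to the auxiliary sequence $\ta$, using the same identity $\ta_k-\ta_{k+1}=(k+1)\bigl(\tfrac1{\nu_k}-\tfrac1{\nu_{k+1}}\bigr)$ for (4), the bound $\ta_k\lesssim k/\mu_k$ for (5), and the same tail comparison $\dot\ta_k\lesssim\ta_{2k}$ for (6). The only difference is that you spell out what the paper calls ``immediate,'' in particular the Cauchy-criterion argument for $k/\nu_k\to0$ in (3), which is a worthwhile detail but not a different method.
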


\begin{proof}
  (1), (2), and (3) are immediate. 

  (4) This follows by a straightforward computation using that $\si_{k+1} \lesssim \si_k$ is equivalent to $\ta_k \lesssim 
  \ta_{k+1}$ and
  \begin{align*}
   \ta_{k} - \ta_{k+1} =(k+1) \Big(\frac{1}{\nu_k} - \frac{1}{\nu_{k+1}}\Big).
  \end{align*}

  (5) The assumptions on $\mu$ imply 
  \begin{align*}
    \ta_k \lesssim \frac{k}{\nu_k} +  \frac{k}{\mu_k} \lesssim \frac{k}{\mu_k}. 
  \end{align*}

  (6) If $\nu_{2k} \lesssim \dot \nu_k$, then 
  \begin{align*}
    \dot \ta_k = \frac{k}{\dot \nu_k} + \sum_{j\ge k} \frac 1 {\dot \nu_j} 
    \lesssim \frac{k}{\nu_{2k}} + \sum_{j\ge k} \frac 1 {\nu_{2j}} 
    \le \frac{2k}{\nu_{2k}} + \sum_{j\ge 2k} \frac 1 {\nu_{j}} = \ta_{2k}
  \end{align*}
 which implies $\si_{2k} \lesssim \dot \si_{k}$. 
\end{proof}

\begin{remark}
  Notice that the descendant $S$ of $N$ has the property that even $s_k = S_k/k!$ is a weight sequence 
  (not only $S$). Hence we can work with the functions $h_s$, $\Ga_s$, and $\Si_s$ introduced in Section \ref{hGaSi}. 

  The descendent $\si$ is equivalent to $\nu$, i.e., $\si \sim \nu$, if and only if
  $\sum_{j\ge k}  1 /\nu_j \lesssim k/\nu_k$; this follows from (1), (2), and (5) in Lemma \ref{lem:log-convex}.
  The latter is the so-called \emph{strong non-quasianalyticity} condition (cf.\ \cite{Komatsu73}, \cite{Bruna80}, 
  \cite{ChaumatChollet94}, and \cite{Petzsche88}, where it is condition ($\ga_1$)). 
  It is well-known that, if $\nu^*$ is increasing and $N$ has moderate growth, then the strong non-quasianalyticity 
  condition is equivalent to $j_E^\infty : \cB^{\{N\}}(\R^n) \to \cB^{\{N\}}(E)$ being surjective for every 
  compact $E \subseteq \R^n$; see e.g.\ \cite[Theorem 30]{ChaumatChollet94} and Section \ref{ssec:characterization} below.

  Moreover, we want to remark that one can recover a \emph{predecessor} $\nu$ from its descendant $\si$. 
  Provided that a positive sequence $\si$ satisfying \ref{lem:log-convex}(3) is given one can choose $\ta_1 := 2$ 
  and $\nu_1 :=1$ 
  and then solve the equations \eqref{tau} and \eqref{sigma} recursively for $\nu_k$. 
  The resulting sequence $\nu$ is increasing, satisfies $\sum_{k\ge 1} 1/\nu_k =1$, and its descendant is $\si$.	
\end{remark}

\subsection{A special partition of unity}

We construct a partition of unity which will be crucial for the proof of the extension theorem.
The idea goes back to Bruna \cite{Bruna80} who considered a single weight sequence $M$ satisfying 
$\sum_{\ell \ge k} 1/\mu_\ell \lesssim k/\mu_k$. 
Chaumat and Chollet \cite{ChaumatChollet94} extended the construction to the case of two weight sequences $M$ and $N$
satisfying $\sum_{\ell \ge k} 1/\nu_\ell \lesssim k/\mu_k$.
We make adjustments to this construction in order to compensate for the moderate growth condition which was heavily used 
in \cite{ChaumatChollet94} and \cite{Bruna80}.  

Recall that $\si^*_k = \si_k/k$ and $s_k = \si^*_1 \cdots \si^*_k$.

\begin{lemma}\label{sequenceconstruction}
Let $\nu$ be an increasing positive sequence satisfying $\nu_0 =1$, $\sum_k 1/\nu_k < \infty$, and \eqref{ugly}.   
Let $\dot \nu$ be another increasing positive sequence such that $\nu_k \lesssim \dot N_k^{1/k}$.
Let $\si$ be the descendant of $\nu$.
There is a constant $A\ge 1$ such that for all integers $p\ge 1$ there exists a sequence $(\alpha^p_k)_{k\in\NN}$ satisfying
\begin{align}
&\sum_{k\ge 0}\frac{\alpha^p_k}{\alpha^p_{k+1}}\le 1, \quad \alpha_0^p=1,\\
&0<\alpha^p_k\le \Big(h_{s}\Big(\frac{1}{3\si^*_{p}}\Big)\Big)^{-1}\Big(\frac{A}{\si^*_{p+1}}\Big)^k \dot N_k. 
\label{alpha3}
\end{align}
\end{lemma}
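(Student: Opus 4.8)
The plan is to pass to the multiplicative ratios and then to build $\alpha^p$ in two stages. Abbreviate the right-hand side of \eqref{alpha3} by $B^p_k$, set $t_p:=1/(3\si^*_p)$ and $P:=\Ga_s(t_p)$. By \eqref{counting2}, $P=\min\{k:\si^*_{k+1}\ge 3\si^*_p\}$, so that $\si^*_P<3\si^*_p\le\si^*_{P+1}$ and in particular $P\ge p$; and by \eqref{GaProp2}, $h_s(t_p)^{-1}=(3\si^*_p)^P/s_P=\sup_m(3\si^*_p)^m/s_m$. Writing $\rho_k:=\alpha^p_{k+1}/\alpha^p_k$, the two requirements become $\sum_{k\ge 0}\rho_k^{-1}\le 1$ together with $\prod_{j<k}\rho_j\le B^p_k$ for all $k$ (and $\alpha^p_0=1\le B^p_0=h_s(t_p)^{-1}$). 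I would choose $\rho_k$ so that the product \emph{saturates} $B^p_k$ for $k\ge P$ and \emph{ramps up geometrically} from $1$ to the level $B^p_P$ on the range $0\le k\le P$.

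Three inputs drive the estimates. (i) $\si^*_{p+1}\sim\si^*_p$: indeed \eqref{ugly} yields $\si_{k+1}\lesssim\si_k$ through Lemma~\ref{lem:log-convex}(4), so consecutive $\si^*$ are comparable. (ii) $\dot\nu$ is non-quasianalytic with a \emph{quantitative} tail bound: since $\dot\nu$ is increasing, \eqref{mucompare} gives $\dot N_k^{1/k}\le\dot\nu_k$, hence $\dot\nu_k\ge\dot N_k^{1/k}\gtrsim\nu_k$ by hypothesis; combined with Lemma~\ref{lem:log-convex}(2), $\sum_{j\ge k}1/\dot\nu_j\lesssim\sum_{j\ge k}1/\nu_j\lesssim k/\si_k=1/\si^*_k$. (iii) A lower bound for $\dot N$: again by hypothesis and Lemma~\ref{lem:log-convex}(1), $\dot N_k^{1/k}\gtrsim\nu_k\gtrsim\si_k=k\,\si^*_k$, while \eqref{mucompare} applied to the weight sequence $s$ gives $s_k^{1/k}\le\si^*_k$.

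For $k\ge P$ I set $\rho_k:=(A/\si^*_{p+1})\dot\nu_{k+1}$, so the product saturates $B^p_k$ and the growth bound holds there automatically once $\alpha^p_P\le B^p_P$ (the quotient $\alpha^p_k/B^p_k$ is then the constant $\alpha^p_P/B^p_P\le 1$). The matching tail is where input (ii) pays off: $\sum_{k\ge P}\rho_k^{-1}=(\si^*_{p+1}/A)\sum_{j\ge P+1}1/\dot\nu_j\lesssim(\si^*_{p+1}/A)(1/\si^*_{P+1})\lesssim 1/A$, because $\si^*_{P+1}\ge 3\si^*_p\gtrsim\si^*_{p+1}$; enlarging $A$ makes this $\le 1/2$, \emph{uniformly in $p$}. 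On the ramp $0\le k\le P$ I take $\alpha^p_k:=\Lambda^k$ with $\Lambda:=(B^p_P)^{1/P}$ (or, to make the intermediate caps automatic, $\Lambda:=\inf_{1\le k\le P}(B^p_k)^{1/k}$); then $\alpha^p_P=\Lambda^P\le B^p_P$ and the ramp contributes $\sum_{k<P}\rho_k^{-1}=P/\Lambda$. Thus everything reduces to the single estimate $\Lambda\gtrsim P$ with a constant independent of $p$.

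This lower bound is the heart of the argument and the step I expect to fight hardest for. At the endpoint it is clean: by (i) and (iii), $\Lambda=(B^p_P)^{1/P}=\frac{3\si^*_p}{s_P^{1/P}}\cdot\frac{A}{\si^*_{p+1}}\,\dot N_P^{1/P}\gtrsim\frac{3A\,\nu_P}{s_P^{1/P}}\gtrsim\frac{3A\,\si_P}{\si^*_P}=3A\,P$, using $s_P^{1/P}\le\si^*_P$ and $\nu_P\gtrsim\si_P=P\si^*_P$. The difficulty is to upgrade this to the \emph{uniform} statement $(B^p_k)^{1/k}\gtrsim P$ for \emph{every} $1\le k\le P$ (equivalently $h_s(t_p)^{-1}\dot N_k\gtrsim(cP\si^*_p)^k$). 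The estimate is of harmonic-critical type: saturating $B^p_k$ throughout the ramp would only give ratios $\rho_k\sim A\,k$ and a divergent $\sum k^{-1}\sim\log P$, so the argument must genuinely exploit that the large factor $h_s(t_p)^{-1}=(3\si^*_p)^P/s_P$ inflates $B^p_k$ well above $(Ak)^k$ at the small and intermediate indices. Concretely I would show that $k\mapsto(B^p_k)^{1/k}=e^{L/k}(A/\si^*_{p+1})\dot N_k^{1/k}$, with $L=\log h_s(t_p)^{-1}=\sum_{j=1}^P\log(3\si^*_p/\si^*_j)$, is decreasing on $[1,P]$ — whence its infimum is the endpoint value $\gtrsim AP$ — by checking $\log(\dot\nu_{k+1}/\dot N_k^{1/k})<L/k$ for $k<P$; here log-convexity of $\dot N$ (as $\dot\nu$ is increasing) reduces the verification to a single threshold, and the defining features of the descendant — maximality in Lemma~\ref{lem:log-convex}, $\si\lesssim\nu$, and $s_k^{1/k}\le\si^*_k\le\si^*_P<3\si^*_p$ along the whole ramp — are exactly what make the per-step budget $L/k$ dominate. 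Once $\Lambda\ge 2P$ is secured the ramp sum is $P/\Lambda\le 1/2$, and together with the tail this gives $\sum_k\rho_k^{-1}\le 1$ with one constant $A$ serving all $p$.
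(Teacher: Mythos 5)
Your overall architecture---tail ratios $\rho_k=(A/\si^*_{p+1})\dot\nu_{k+1}$ controlled via Lemma~\ref{lem:log-convex}(2), plus a geometric ramp whose ratio must dominate its length---matches the paper's proof, and your tail estimate is fine. But your breakpoint is different, and the estimate you yourself single out as the heart of the argument is false. You stop the ramp at $P=\Ga_s(t_p)$, $t_p=1/(3\si^*_p)$, i.e.\ at the index where the supremum $h_s(t_p)^{-1}=\sup_m(3\si^*_p)^m/s_m$ is attained, and you then need $(B^p_k)^{1/k}\gtrsim P$ for \emph{every} $1\le k\le P$ (your claimed monotonicity of $k\mapsto (B^p_k)^{1/k}$ on $[1,P]$ would give exactly this). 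This fails: nothing in the hypotheses prevents $\si^*$ from rising quickly to just below $3\si^*_p$ right after $p$ and then sitting on that plateau for an arbitrarily long stretch. In that case $P$ (the end of the plateau) is arbitrarily large, while $h_s(t_p)^{-1}=\prod_{j=1}^{P}(3\si^*_p/\si^*_j)$ gains only factors close to $1$ along the plateau and so remains of size roughly $(3\si^*_p)^p$. Hence $B^p_1=h_s(t_p)^{-1}(A/\si^*_{p+1})\dot\nu_1\approx(3\si^*_p)^{p-1}A\dot\nu_1$, and since your cap forces $\Lambda\le B^p_1$ while your sum condition forces $\Lambda\ge 2P$, no constant $A$ uniform in $p$ can exist once such plateaus occur for infinitely many $p$. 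Such $\nu$ do satisfy all hypotheses: take $\nu$ constant on huge blocks with jumps tuned so that $\ta_p/\ta_{p+1}\approx 3-\ep$; then $\nu$ is increasing and non-quasianalytic, \eqref{ugly} holds because the jumps of $\si^*$ stay bounded by $3$, and an increasing $\dot\nu$ with $\nu_k\lesssim\dot N_k^{1/k}$ can always be manufactured recursively. Your endpoint computation $(B^p_P)^{1/P}\gtrsim AP$ is correct, but in this scenario $k\mapsto(B^p_k)^{1/k}$ is \emph{increasing} rather than decreasing, so the infimum sits at small $k$, far below $P$.

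The repair is precisely the paper's choice: stop the ramp at $p$, not at $P$. The paper takes $\al^p_k=(2p)^k$ for $k\le p$ and $\al^p_k=(A/\si^*_{p+1})^k\dot N_k$ for $k>p$. The ramp then contributes exactly $p\cdot\tfrac1{2p}=\tfrac12$, the tail is your estimate with $P$ replaced by $p$ (which is what Lemma~\ref{lem:log-convex}(2) gives directly, since $\si^*_{p+1}\sum_{j>p}1/\dot\nu_j\lesssim 1$), and the cap \eqref{alpha3} for $k\le p$ needs only the \emph{trivial} part of the supremum, namely $h_s(t_p)^{-1}\ge(3\si^*_p)^p/s_p$: one checks $(2p)^k(\si^*_{p+1}/A)^k/\dot N_k\le\si_{p+1}^k/((A/2)^k\dot N_k)\le\si_p^k/S_k\le\si_p^p/S_p\le(3\si^*_p)^p/s_p$, using $p\si^*_{p+1}\le\si_{p+1}\lesssim\si_p$, $\si\lesssim\dot\nu$, and that $k\mapsto\si_p^k/S_k$ increases for $k\le p$. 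In short, the large factor $h_s(t_p)^{-1}$ never needs to be saturated; it only has to absorb a ramp of length $p$, and insisting on saturation at $\Ga_s(t_p)$ is exactly what breaks your argument.
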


\begin{proof}
  Let $A \ge 1$ be a constant which shall be specified later. Define 
  \[
    \al^p_k := 
    \begin{cases}
        (A/\si^*_{p+1})^k \dot N_k & \text{ if } k >p, \\
        (2p)^k & \text{ if } k \le p.
    \end{cases}
  \]
  By Lemma \ref{lem:log-convex},  $\si_{k+1} \lesssim \si_k \lesssim \nu_k \lesssim \dot N_k^{1/k}$ and thus, 
  for some constant $C \ge 1$, 
  \begin{align*}
    \alpha^p_p= 2^pp^p\le  \Big(\frac{2C}{\si^*_{p+1}}\Big)^p \dot N_p. 
  \end{align*}
  So, for $k \ge p$,
  \[
    \frac{\alpha_k^p}{\alpha^p_{k+1}} \le \frac{\si^*_{p+1}}{ A\,  \dot\nu_{k+1}},
  \]
  provided that $A \ge 2C$.
  Hence, since $\sum_{j\ge k}  1 /\dot \nu_j \lesssim \sum_{j\ge k}  1 /\nu_j \lesssim k/\si_k$ 
  by \eqref{mucompare} and Lemma~\ref{lem:log-convex},
  \begin{align*}
      \sum_{k\ge 0} \frac{\alpha_k^p}{\alpha^p_{k+1}}
      &\le\sum_{k<p}\frac{1}{2p}+ \frac{\si^*_{p+1}}{A}\sum_{k\ge p}\frac{1}{\dot \nu_{k+1}}
      \le 1,
  \end{align*}
  if $A$ is chosen large enough. Since always $h_s\le 1$, \eqref{alpha3} is obvious for $k>p$. 
  If $k\le p$, then, using $\si_{k+1} \lesssim \si_k$ and $\si \lesssim \dot \nu$,
  \begin{align*}
  \frac{\al^p_k}{(A/\si^*_{p+1})^k\,  \dot N_k}
  &=\frac{2^k p^k}{(A/\si^*_{p+1})^k\,  \dot N_k}
  \le \frac{\si_{p+1}^k}{(A/2)^{k} \dot N_k} \le \frac{\si_{p}^k}{S_k}, 
  \end{align*}
  provided that $A$ is large enough. 
  Since $\si_{p}^k/S_k \le \si_{p}^p/S_p$ if $k\le p$, we obtain
  \begin{align*}
  \frac{\al^p_k}{(A/\si^*_{p+1})^k\,  \dot N_k}
  &\le \frac{\si_{p}^p}{S_p} = \frac{\si_{p}^p}{p!\,s_p}
  \le \frac{(3 \si^*_{p})^p}{s_p} \le \Big(h_{s}\Big(\frac{1}{3\si^*_{p}}\Big)\Big)^{-1}, 
  \end{align*} 
  by \eqref{h}.
  The proof is complete.
\end{proof}

\begin{proposition}\label{testfunctionconstruction}
In the setup of Lemma \ref{sequenceconstruction}
there is a constant $B\ge 1$ such that for all $\ep>0$ and all $t>1$ there exists a $C^\infty$-function $\ph_{\ep,t}$ satisfying
\begin{enumerate}
\item $0\le\ph_{\ep,t}(x)\le 1$ for all $x\in\RR^n$,
\item $\ph_{\ep,t}(x)=1$ for all $x\in\RR^n$ with $|x|\le 1$,
\item $\ph_{\ep,t}(x))=0$ for all $x\in\RR^n$ with $|x|\ge t$,
\item for all $\be \in\NN^n$ and all $x\in\RR^n$,
\begin{equation*} 
  |\ph^{(\be)}_{\ep,t}(x)|\le \frac{\ep^{|\be|} \dot N_{|\be|}}{h_{s}(B\ep(t-1))}.
\end{equation*}
\end{enumerate}
\end{proposition}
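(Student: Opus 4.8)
The plan is to build $\ph_{\ep,t}$ by the classical infinite-convolution method of Bruna and Chaumat--Chollet. Fix the indicator $\psi := \mathbf 1_{\{|x|\le (1+t)/2\}}$ and a fixed nonnegative radial $\theta\in C^\infty_c(\RR^n)$ with $\int\theta=1$, $\supp\theta\subseteq\{|x|\le 1\}$, and $\|\p_j\theta\|_{L^1}\le C_0$ for a dimensional constant $C_0$. Given radii $a_k>0$ with $\sum_{k\ge 1}a_k\le (t-1)/2$, set $\theta_k(x):=a_k^{-n}\theta(x/a_k)$ and $\ph_{\ep,t}:=\psi*\theta_1*\theta_2*\cdots$. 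Since each $\theta_k$ is a probability density supported in $\{|x|\le a_k\}$, properties (1)--(3) are immediate: $0\le\ph_{\ep,t}\le 1$, $\ph_{\ep,t}=1$ on $\{|x|\le (1+t)/2-\sum a_k\}\supseteq\{|x|\le 1\}$, and $\supp\ph_{\ep,t}\subseteq\{|x|\le (1+t)/2+\sum a_k\}\subseteq\{|x|\le t\}$. For the derivative bound I would, for $\be$ with $|\be|=m$, write $\p^\be=\p_{j_1}\cdots\p_{j_m}$ and route one derivative onto each of $\theta_1,\dots,\theta_m$ (using commutativity of convolution); with $\|f*g\|_\infty\le\|f\|_\infty\|g\|_{L^1}$, $\|\theta_k\|_{L^1}=1$, and $\|\p_j\theta_k\|_{L^1}\le C_0/a_k$ this gives
\[
  |\p^\be\ph_{\ep,t}(x)| \le \prod_{k=1}^{m}\frac{C_0}{a_k}.
\]
Everything then reduces to a good choice of the radii $a_k$.

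In the regime where $\ep(t-1)$ is not too large I would take $a_k := \tfrac{t-1}{2}\,\alpha^p_{k-1}/\alpha^p_k$ with the sequence $(\alpha^p_k)$ from Lemma \ref{sequenceconstruction} and $p$ fixed below; the first bound there yields $\sum_k a_k=\tfrac{t-1}{2}\sum_{j\ge0}\alpha^p_j/\alpha^p_{j+1}\le(t-1)/2$, and the product telescopes, $\prod_{k=1}^m (C_0/a_k)=(2C_0/(t-1))^m\,\alpha^p_m$, using $\alpha^p_0=1$. Feeding in \eqref{alpha3} turns the derivative bound into
\[
  |\p^\be\ph_{\ep,t}(x)| \le \Big(\frac{2AC_0}{(t-1)\,\si^*_{p+1}}\Big)^{m}\,\frac{\dot N_m}{h_{s}(1/(3\si^*_p))}.
\]
Comparing with the target $\ep^m\dot N_m/h_{s}(B\ep(t-1))$, it suffices to arrange $2AC_0/((t-1)\si^*_{p+1})\le\ep$ and $h_{s}(B\ep(t-1))\le h_{s}(1/(3\si^*_p))$. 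Since $\si^*_k$ is increasing to $\infty$ by Lemma \ref{lem:log-convex}(3), I would choose $p$ to be the largest index with $\si^*_p\le\tfrac{1}{3B\ep(t-1)}$; then $\si^*_{p+1}>\tfrac{1}{3B\ep(t-1)}$, the second inequality holds by monotonicity of $h_{s}$, and the first holds provided $B$ is a small enough constant (forcing $6AC_0B\le 1$). Such $p$ exists with $p\ge 1$ precisely when $\tfrac{1}{3B\ep(t-1)}\ge\si^*_1=1$, i.e.\ when $\ep(t-1)\le\tfrac{1}{3B}$.

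The main obstacle is the complementary regime $\ep(t-1)>\tfrac{1}{3B}$: there no admissible $p$ exists (already $\si^*_1=1$ is too large), and moreover $h_{s}(B\ep(t-1))$ saturates at $1$, so the inequality to prove sharpens to $|\p^\be\ph_{\ep,t}|\le\ep^m\dot N_m$, with no help from the denominator. The key observation unlocking this case is that $\dot\nu$ is itself non-quasianalytic: from $\nu_k\lesssim\dot N_k^{1/k}\le\dot\nu_k$, the last step by \eqref{mucompare}, together with $\sum_k 1/\nu_k<\infty$, one gets $\sum_k 1/\dot\nu_k\lesssim\sum_k 1/\nu_k<\infty$. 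I would therefore abandon $(\alpha^p_k)$ here and choose directly $a_k:=C_0/(\ep\,\dot\nu_k)$. Then $\sum_k a_k=\tfrac{C_0}{\ep}\sum_k 1/\dot\nu_k\le(t-1)/2$ precisely because $\ep(t-1)$ is large, while the product telescopes to $\prod_{k=1}^m(C_0/a_k)=\ep^m\,\dot\nu_1\cdots\dot\nu_m=\ep^m\dot N_m$, which is $\le\ep^m\dot N_m/h_{s}(B\ep(t-1))$ since $h_{s}\le 1$. Finally I would fix $B$ small enough that $\tfrac{1}{3B}$ exceeds $2C_0\sum_k 1/\dot\nu_k$, so that the two regimes overlap and cover all $\ep>0,\ t>1$; the remaining verifications are the routine convolution estimates indicated above.
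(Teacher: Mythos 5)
Your proof is correct, and at its core it follows the same strategy as the paper: both arguments rest on Lemma \ref{sequenceconstruction} together with the H\"ormander-type iterated-convolution construction of cut-off functions, with the index $p$ tuned to the size of $\ep(t-1)$, and both must treat separately the regime where $\ep(t-1)$ is too large for any admissible $p$ to exist. The differences are in execution, and they are worth recording. The paper reduces to $n=1$, $t=2$ (recovering general $(n,t)$ by composition with a diffeomorphism that is affine where it matters), invokes H\"ormander's Theorem 1.3.5 as a black box to produce $\psi_\eta$ with $|\psi_\eta^{(k)}|\le 2^{k-1}\al_k^p$, and disposes of the large regime ($\eta>2A$) simply by reusing the threshold function $\psi_{2A}$ and absorbing the loss into the constant $\de=1/h_s(1/3)$, at the price of the rescaling $\ep=\de\eta$, $B=1/(6\de A)$. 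You instead carry out the convolution construction directly in $\R^n$ for general $t$ (inlining the proof of the cited theorem), and in the large regime you build a genuinely different function with radii $a_k=C_0/(\ep\,\dot\nu_k)$, which requires the correct observation that $\dot\nu$ inherits non-quasianalyticity from $\nu$ via $\nu_k\lesssim\dot N_k^{1/k}\le\dot\nu_k$ and \eqref{mucompare}; this is a bit more work than the paper's recycling trick, but it is self-contained and avoids both the dimensional reduction and the external citation. Two minor remarks: in the intermediate range $1/(3B)<\ep(t-1)<1/B$ the denominator $h_s(B\ep(t-1))$ need not actually equal $1$, but your argument only uses $h_s\le 1$ there, so nothing breaks; and your $B$ comes out small rather than $\ge 1$ as the statement demands, but this is immaterial --- the paper's own proof also yields $B=1/(6\de A)<1$, and every later use of the proposition (Proposition \ref{Proposition6matrix} and the extension theorem) only needs some positive $B$.
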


\begin{proof}
It suffices to consider the case $n=1$ and $t=2$; the general case follows by composition with suitable functions, e.g., 
$\ph_{\ep,t}(x) := \ph_{(t-1)\ep,2}(\th(x))$ where $\th$ is an odd diffeomorphism of $\R$ satisfying 
$\th(x) = (x+t-2)/(t-1)$ for $x\ge 1$.

Let $A$ be the constant from Lemma \ref{sequenceconstruction}.
Fix $0<\eta\le 2 A/\si^*_1 = 2A$. 
Since $\si^*$ is increasing and tends to $\infty$, by Lemma \ref{lem:log-convex}, 
there is an integer $p \ge 1$ such that
\begin{equation}\label{testfunctionconstructionequ1}
\frac{2A}{\si^*_{p+1}}\le \et \le \frac{2A}{\si^*_p}.
\end{equation}
By Lemma \ref{sequenceconstruction} and \cite[Theorem 1.3.5]{Hoermander83I} (cf.\ \cite[p.14-15]{ChaumatChollet94}), 
there exists a smooth function $\psi_{\et}$ with support contained in $[-2,2]$ satisfying 
$0 \le \ps_\et \le 1$, $\ps_\et(t) = 1$ if $t \in [-1,1]$, and
\begin{align*}
  |\ps_\et^{(k)}(t)| 
    \le 2^{k-1}\alpha_k^p &\le 
    \Big(h_{s}\Big(\frac{1}{3\si^*_{p}}\Big)\Big)^{-1}\Big(\frac{2A}{\si^*_{p+1}}\Big)^k \dot N_k
  \le \frac{\eta^k \dot N_k}{h_s(\et/(6A))}, 
\end{align*}
by \eqref{testfunctionconstructionequ1}.
For $\eta>2A$ we put $\ps_{\eta}:=\psi_{2A}$; then since $h_s \le1$,
\begin{align*}
|\ps^{(k)}_{\eta}(t)| &\le \frac{(2A)^k \dot N_k}{h_s(2A/(6A))} 
  \le \frac{1}{h_s(1/3))}  \frac{\eta^k \dot N_k}{h_s(\et/(6A))}.
\end{align*}
If $\de:= 1/h_s(1/3)$ then for every $\eta>0$,
\begin{equation*}
|\psi^{(k)}_{\eta}(t)| \le \frac{(\de \eta)^k \dot N_k}{h_s(\et/(6A))}.
\end{equation*}
The statement follows with $B:= 1/(6\de A)$ if we set $\ep = \de \et$ and $\ph_{\ep,2}:= \ps_{\ep/\de}$.
\end{proof}

Before we continue the construction of the partition of unity 
let us specify suitable weight matrices.

\begin{definition}[Admissible weight matrix] \label{admissiblematrix}
  A weight matrix $\fN$ is called \emph{admissible} if the following conditions hold.
  \begin{enumerate}
    \item For all $N, \dot N \in \fN$ we have $\nu \lesssim \dot \nu$ or $\dot \nu \lesssim \nu$. 
    \item $\sum_k 1/\nu_k < \infty$ for each $N \in \fN$.  
    \item \eqref{ugly} holds for each $N \in \fN$.
    \item For each $N \in \fN$ there is $\dot N \in \fN$ such that $\nu_k \lesssim \dot N_k^{1/k}$. 
    \item For each $N \in \fN$ there is $\dot N \in \fN$ such that $\nu_{2k} \lesssim \dot \nu_k$.
  \end{enumerate} 
\end{definition}

We remark that (4) and (5) imply that for each $N \in \fN$ there is $\ddot N \in \fN$ such that 
$\nu_k \lesssim \ddot N_k^{1/k}$ \emph{and} $\nu_{2k} \lesssim \ddot \nu_k$ which we shall frequently use; 
indeed, 
$\nu_k \le \nu_{2k} \lesssim \dot \nu_k \lesssim \ddot N_k^{1/k} \le \ddot \nu_k$, by \eqref{mucompare}. 

\begin{remark}
	The relatively strong condition (3) is needed for technical reasons 
	(in Lemma \ref{sequenceconstruction} and Proposition \ref{testfunctionconstruction}). 
	There are situations in which the extension result holds although (3) is violated; see the end of Section \ref{example}. 
\end{remark}

\begin{example} \label{single}
  A weight matrix $\fN=\{N\}$ which consists of a single weight sequence $N$ is admissible if and only if $N$ 
  is non-quasianalytic and has moderate growth. This follows from Lemma \ref{ChaumatCholletmod}.
\end{example}

\begin{definition}[Admissible weight function]
  We say that a weight function $\om$ is \emph{admissible} if and only if the associated  
  weight matrix $\fW$ is admissible. That means that $\om$ is 
  non-quasianalytic (i.e.\ $\int_1^\infty t^{-2}\om(t) \,dt < \infty$) and $\fW$ satisfies \ref{admissiblematrix}(3)\&(4);
  the other conditions in Definition \ref{admissiblematrix} hold automatically, 
  see Lemma \ref{lemma4} and \cite[Corollary 5.8]{RainerSchindl12}.
  Note that a non-quasianalytic weight function $\om$ is admissible if it satisfies \eqref{om6} (see 
  Lemma \ref{ChaumatCholletmod} and \cite[Lemma 5.7]{RainerSchindl12}).
\end{definition}

\begin{lemma} \label{iteration}
  Let $\fN$ be an admissible weight matrix, and let $n_0 \in \N$. 
  For every $N \in \fN$ there exists $\dot N \in \fN$ such that the descendants $S$ of $N$ and $\dot S$ of $\dot N$ 
  satisfy
  \begin{equation} \label{test2}
       h_s(t) \le h_{\dot s}(A t)^{n_0}, \quad t>0,
  \end{equation}
  for some constant $A = A(n_0,N)$. 
\end{lemma}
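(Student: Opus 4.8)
The plan is to bootstrap the single inequality supplied by Lemma~\ref{claim1} (which produces a square, i.e.\ the case $n_0=2$) into an arbitrary power $n_0$ by iterating along a chain of sequences in $\fN$ and then exploiting the bound $h\le 1$. The crucial preliminary observation is that, since $s_k=\si^*_1\cdots\si^*_k$, the sequence $s$ is a weight sequence whose associated increasing $\mu$-sequence is exactly $\si^*$; hence the quantities $h_s$, $\Ga_s$, $\Si_s$ from Section~\ref{hGaSi} are at our disposal, and for two descendants $s$, $\dot s$ the comparison $\si^*_{2k}\lesssim\dot\si^*_k$ is equivalent to $\si_{2k}\lesssim\dot\si_k$ (the factor $2$ being absorbed into the implicit constant).

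First I would produce the case $n_0=2$. Given $N\in\fN$, admissibility condition \ref{admissiblematrix}(5) furnishes $\dot N\in\fN$ with $\nu_{2k}\lesssim\dot\nu_k$; both $N$ and $\dot N$ are non-quasianalytic by \ref{admissiblematrix}(2), so their descendants $\si$, $\dot\si$ exist. By Lemma~\ref{lem:log-convex}(6) this gives $\si_{2k}\lesssim\dot\si_k$, i.e.\ $\si^*_{2k}\lesssim\dot\si^*_k$, which is precisely the hypothesis of Lemma~\ref{claim1} applied to the weight sequences $s$ and $\dot s$. Lemma~\ref{claim1} then yields a constant $C\ge 1$ with $h_s(t)\le h_{\dot s}(Ct)^2$ for all $t>0$.

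Next I would iterate. Starting from $N_0:=N$, repeatedly apply the previous step to obtain a chain $N_0,N_1,\dots,N_r\in\fN$ with descendants $s_0,\dots,s_r$ and constants $C_0,\dots,C_{r-1}\ge 1$ satisfying $h_{s_i}(t)\le h_{s_{i+1}}(C_i t)^2$. Substituting these into one another gives
\[
  h_{s_0}(t)\le h_{s_r}(C_{r-1}\cdots C_0\,t)^{2^r}, \quad t>0.
\]
Now choose $r:=\lceil\log_2 n_0\rceil$, set $A:=C_{r-1}\cdots C_0$ and $\dot N:=N_r$. Since $0\le h_{s_r}\le 1$ and $2^r\ge n_0$, raising the base to the smaller exponent only enlarges the right-hand side, so $h_s(t)=h_{s_0}(t)\le h_{s_r}(At)^{2^r}\le h_{\dot s}(At)^{n_0}$, which is \eqref{test2}; the constant $A$ depends only on $n_0$ and the chain emanating from $N$, as required.

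The routine parts are the translation $\si^*_{2k}\lesssim\dot\si^*_k\Leftrightarrow\si_{2k}\lesssim\dot\si_k$ and the bookkeeping of constants; the one point that must be handled with care is the direction of the final estimate. Because $h\le 1$, a higher power is the \emph{stronger} statement, so the argument must build the exponent \emph{up} to at least $n_0$ (via the doubling in Lemma~\ref{claim1}) and only then drop from $2^r$ down to $n_0$. This is exactly why iteration, rather than a single application, is needed, and why admissibility condition \ref{admissiblematrix}(5), which keeps the whole chain inside $\fN$, is essential.
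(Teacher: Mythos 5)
Your proof is correct and follows essentially the same route as the paper: build a chain in $\fN$ via admissibility condition \ref{admissiblematrix}(5), pass the relation $\nu_{2k}\lesssim\dot\nu_k$ to the descendants by Lemma \ref{lem:log-convex}(6), iterate the squaring inequality of Lemma \ref{claim1}, and finally use $h_{\dot s}\le 1$ to lower the exponent $2^r$ to $n_0$. In fact you spell out the last exponent-dropping step more explicitly than the paper, which simply states that finitely many iterations suffice.
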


\begin{proof}
  Let $N\in \fN$ be fixed. There exist $\dot N, \ddot N, \ldots \in \fN$ such that 
  \begin{equation*}
    \nu_{2k} \lesssim \dot \nu_k \le \dot \nu_{2k} \lesssim \ddot \nu_k \le \ddot \nu_{2k} \lesssim \cdots   
  \end{equation*}
  and the same relations hold for the respective descendants, by Lemma \ref{lem:log-convex}, and so 
  \begin{equation*}
    \si^*_{2k} \lesssim \dot \si^*_k \le \dot \si^*_{2k} \lesssim \ddot \si^*_k 
    \le \ddot \si^*_{2k} \lesssim \cdots   
  \end{equation*}
  By Lemma \ref{claim1}, there are constants $A,\dot A, \ldots \ge 1$ such that 
  \begin{align*}
    h_s(t) \le h_{\dot s}(A t)^2 \le h_{\ddot s}(A \dot A t)^4 \le h_{\dddot s}(A \dot A \ddot A t)^8 \le \cdots   
  \end{align*}
  After finitely many iterations we obtain \eqref{test2}.
\end{proof}

Now we are ready to finish the construction. We will use the following lemma.
We denote by $B(x,r) = \{y \in \R^n : |x-y| <r\}$ the open ball centered at $x \in \R^n$ with radius $r>0$ 
and 
by $d(x,E) = \inf\{|x-y| : y \in E\}$ the Euclidean distance of $x$ to some set $E \subseteq \R^n$. 

\begin{lemma}[{\cite[Proposition 5]{ChaumatChollet94},\cite{CoifmanWeiss71}}] \label{balls}
  There exist constants $0<a<1$, $b>1$, $c>1$, $n_0\in \N_{>1}$, 
  such that for all compact $E \subseteq \R^n$ there is a family of open balls 
  $\{B(x_i,r_i)\}_{i \in \N}$ 
  with the following properties:
  \begin{enumerate} 
    \item $\R^n \setminus E = \bigcup_i B(x_i,r_i) = \bigcup_i B(x_i,cr_i)$,
    \item for $x \in B(x_i,c r_i)$ we have $a r_i \le d(x,E) \le b r_i$ and $a\, d(x,E) \le d(x_i,E) \le b\, d(x,E)$,
    \item for each $j$ the ball $B(x_j,cr_j)$ intersects at most $n_0$ balls of the collection $\{B(x_i,cr_i)\}_{i \in \N}$.
  \end{enumerate} 
\end{lemma}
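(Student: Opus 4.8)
The plan is to realize $\{B(x_i,r_i)\}$ as a Whitney-type covering of the open set $\Om := \R^n \setminus E$ subordinate to the distance function $d(\cdot,E)$; this is precisely the classical covering lemma in a space of homogeneous type (here $\R^n$ equipped with Lebesgue measure, which is doubling), whence the references to Chaumat--Chollet and Coifman--Weiss. We may assume $E \neq \emptyset$, since otherwise $\Om$ is empty or all of $\R^n$ and there is nothing to prove.

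First I would stratify $\Om$ into dyadic shells
\[
  S_k := \{ x \in \Om : 2^k \le d(x,E) < 2^{k+1} \}, \qquad k \in \Z,
\]
so that $\Om = \bigcup_k S_k$ and, since $E$ is compact, each $S_k$ is bounded. Fix once and for all a constant $c>1$ together with $\theta$ satisfying $\tfrac{1}{1+c} < \theta < \tfrac{1}{c}$. On every nonempty shell $S_k$ I would choose a maximal subset $\{x_i\}$ of points that are pairwise at distance $\ge \theta 2^k$, and set $r_i := \theta 2^k$. By maximality each $x \in S_k$ lies within $\theta 2^k$ of some chosen $x_i$, so the balls $B(x_i,r_i)$ already cover $S_k$; as each bounded shell contributes finitely many centers and there are countably many shells, the total family is countable and may be indexed by $\N$.

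The distance estimates rest only on the $1$-Lipschitz bound $|d(x,E)-d(y,E)| \le |x-y|$. For $x \in B(x_i,cr_i)$ with $x_i \in S_k$ one gets $(1-c\theta)2^k \le d(x,E) \le (2+c\theta)2^k$; writing $2^k = r_i/\theta$ and comparing with $d(x_i,E) \in [2^k,2^{k+1})$ yields both inequalities of (2) with $a := \min\{1/\theta - c,\ 1/(2+c\theta)\}$ and $b := \max\{2/\theta + c,\ 2/(1-c\theta)\}$; the condition $\tfrac{1}{1+c} < \theta < \tfrac{1}{c}$ is exactly what makes $0 < a < 1 < b$. In particular $d(x,E) \ge a r_i > 0$ on $B(x_i,cr_i)$, so every such ball lies in $\Om$ and both unions in (1) coincide with $\Om$.

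The delicate point, and the step I expect to require the most care, is the bounded-overlap property (3). If $B(x_j,cr_j) \cap B(x_i,cr_i) \neq \emptyset$ with $x_j \in S_k$ and $x_i \in S_l$, then a common point $y$ satisfies $d(y,E) \approx 2^k$ and $d(y,E) \approx 2^l$ simultaneously, which forces $|k-l| \le L$ for a constant $L = L(c,\theta)$; hence the radii are comparable and all competing centers $x_i$ lie in a single ball $B(x_j,\kappa 2^k)$ with $\kappa = \kappa(c,\theta,L)$. For each of the at most $2L+1$ admissible levels $l$, the centers lying on $S_l$ are $\theta 2^l \ge \theta 2^{k-L}$-separated, so the disjointness of the balls $B(x_i,\theta 2^l/2)$ together with a volume comparison in $\R^n$ bounds their number inside $B(x_j,\kappa 2^k)$ by a constant independent of $k$, $j$, and $E$. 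Summing over the finitely many levels produces a uniform bound $n_0 \in \N_{>1}$, which completes the construction.
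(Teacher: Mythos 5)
The paper does not actually prove this lemma: it is quoted as a black box from \cite[Proposition 5]{ChaumatChollet94} together with the covering techniques of \cite{CoifmanWeiss71}, so your proposal is not competing with an argument in the text but supplying the one the references contain. What you write is correct and is essentially the classical Whitney/Coifman--Weiss construction: stratify $\R^n\setminus E$ into dyadic shells $S_k$ by the distance function, take a maximal $\theta 2^k$-separated net in each shell with $r_i=\theta 2^k$, use the $1$-Lipschitz property of $d(\cdot,E)$ for (1) and (2), and prove (3) by first forcing intersecting balls to live on comparably many levels ($|k-l|\le L$) and then packing, per level, disjoint balls of comparable radius into a fixed dilate of $B(x_j,c r_j)$ by volume comparison. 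I checked the constants: on $B(x_i,cr_i)$ one has $(1/\theta-c)r_i\le d(x,E)\le(2/\theta+c)r_i$ and $d(x,E)/(2+c\theta)\le 2^k\le d(x,E)/(1-c\theta)$, so your $a=\min\{1/\theta-c,\,1/(2+c\theta)\}$ and $b=\max\{2/\theta+c,\,2/(1-c\theta)\}$ do satisfy (2), and $\theta<1/c$ is exactly what keeps the dilated balls inside $\R^n\setminus E$ (hence both unions in (1) equal $\R^n\setminus E$) and makes $a>0$; the resulting $n_0$ depends only on $n$, $c$, $\theta$, never on $E$ or on the index $j$, as required. Two cosmetic remarks: the lower restriction $\theta>1/(1+c)$ is not actually needed, since $a<1$ already follows from $a\le 1/(2+c\theta)$; and for the indexing by $\N$ note that, $E$ being compact, all shells of sufficiently large level are nonempty, so the family of balls is automatically countably infinite (the degenerate case $E=\emptyset$, where $d(\cdot,E)=\infty$, is irrelevant for the paper, which applies the lemma to nonempty compact sets).
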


\begin{proposition}[Partition of unity]\label{Proposition6matrix}
Let $E \subseteq \R^n$ be a compact set and let $\{B(x_i,r_i)\}_{i \in \N}$ be the family of balls 
provided by Lemma \ref{balls}.
Let $\fN$ be an admissible weight matrix, $N \in \fN$, and 
$S$ the descendant of $N$.
Then there exists $\ddot N \in \fN$ and  
$B_1\ge 1$ such that for all $\ep>0$ there is a family of $C^\infty$-functions 
$\{\vh_{i,\ep}\}_{i \in \N}$ satisfying
\begin{enumerate}
\item $0\le\vh_{i,\ep}\le 1$ for all $i\in\N$,
\item $\supp\vh_{i,\ep}\subseteq B(x_i,cr_i)$ for all $i\in\N$,
\item $\sum_{i\in\N}\vh_{i,\ep}(x)=1$ for all $x \in \R^n\setminus E$,
\item for all $\be \in\N^n$ and $x\in\R^n\setminus E$,
\begin{equation*}
  |\vh^{(\be)}_{i,\ep}(x)| \le \frac{\ep^{|\be|} \ddot N_{|\be|}}{h_s(B_1\ep\, d(x,E))}.  
\end{equation*}
\end{enumerate}
\end{proposition}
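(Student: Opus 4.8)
\emph{Proof plan.}
The plan is to rescale the cut-offs furnished by Proposition~\ref{testfunctionconstruction} to the balls $B(x_i,r_i)$ and then to glue them into a partition of unity by the \emph{telescoping product}
\[
  \vh_{i,\ep}:=\th_i\prod_{j<i}(1-\th_j),
\]
where the $\th_i$ are the rescaled bumps, rather than by the normalisation $\th_i/\sum_j\th_j$. This choice is the crux of the matter. Differentiating a quotient $\th_i/\Theta$ with $\Theta=\sum_j\th_j\ge 1$ would, via the Fa\`a di Bruno formula applied to $1/\Theta$, generate terms with up to $|\be|$ factors $\Theta^{(\de_l)}$, each carrying a factor $1/h_{\dot s}$; since each such bump bound has a denominator that is \emph{independent of the order} $|\de_l|$, one ends up with $h_{\dot s}(\cdot)^{-|\be|}$, an unbounded power that no change of weight sequence can absorb. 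The product construction instead only ever involves the $\le n_0$ factors that are non-constant near a given point, so the number of $1/h$-factors is capped by the fixed overlap constant $n_0$ of Lemma~\ref{balls}; this is precisely the power that Lemma~\ref{iteration} is designed to convert back into a single factor.

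First I would fix the weight sequences. Let $n_0$ be the overlap constant of Lemma~\ref{balls}. Applying Lemma~\ref{iteration} to $N$ with this $n_0$ yields $\dot N\in\fN$ whose descendant $\dot S$ (with $\dot s_k=\dot S_k/k!$) satisfies $h_s(t)\le h_{\dot s}(At)^{n_0}$ for all $t>0$. Since $\fN$ is admissible, $\dot N$ again satisfies \ref{admissiblematrix}(2)\&(3), and by \ref{admissiblematrix}(4) there is $\ddot N\in\fN$ with $\dot\nu_k\lesssim\ddot N_k^{1/k}$; this $\ddot N$ is the sequence claimed in the statement. Now Lemma~\ref{sequenceconstruction} and Proposition~\ref{testfunctionconstruction}, applied with base sequence $\dot\nu$ (descendant $\dot s$) and auxiliary sequence $\ddot N$, provide $B\ge 1$ and, for each $\ep>0$ and $t>1$, cut-offs $\ph_{\ep,t}$ with $|\ph_{\ep,t}^{(\be)}|\le \ep^{|\be|}\ddot N_{|\be|}/h_{\dot s}(B\ep(t-1))$.

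Next, fix $c'\in(1,c)$ and set $\th_i(x):=\ph_{\ep r_i,\,c'}((x-x_i)/r_i)$. Then $0\le\th_i\le1$, $\th_i\equiv 1$ on $B(x_i,r_i)$, and $\supp\th_i\subseteq\overline{B(x_i,c'r_i)}\subseteq B(x_i,cr_i)$; the chain rule gives $|\th_i^{(\be)}(x)|\le \ep^{|\be|}\ddot N_{|\be|}/h_{\dot s}(B\ep r_i(c'-1))$. For $x\in\supp\th_i$, Lemma~\ref{balls}(2) yields $r_i\ge d(x,E)/b$, so, $h_{\dot s}$ being increasing, the denominator is bounded below by $h_{\dot s}(B_1\ep\,d(x,E))$ with $B_1:=B(c'-1)/b$; thus all locally active cut-offs obey one and the same bound in terms of $d(x,E)$. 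Properties (1)–(3) then follow from the telescoping identity $\sum_{i\le K}\vh_{i,\ep}=1-\prod_{i\le K}(1-\th_i)$ together with Lemma~\ref{balls}(1) (every $x\in\R^n\setminus E$ lies in some $B(x_i,r_i)$, where $\th_i=1$, forcing the product to vanish); local finiteness, Lemma~\ref{balls}(3), makes each $\vh_{i,\ep}$ a genuinely finite, hence smooth, product near every point, with support in $B(x_i,cr_i)$.

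The heart of the argument is property (4). At $x\in\R^n\setminus E$ only the $\le n_0$ factors with $x\in\supp\th_j$ are non-constant, and on each of their supports the scale $r_j$ is comparable to $d(x,E)$, so each obeys the uniform bound above. Differentiating this product of $\le n_0$ factors by the Leibniz rule, collapsing the resulting products of sequence values by super-multiplicativity $\prod_l\ddot N_{|\be_l|}\le\ddot N_{|\be|}$ (a consequence of \eqref{mucompare}), and using $h_{\dot s}\le 1$ to replace the $r'\le n_0$ differentiated factors by the full power $n_0$, I obtain
\[
  |\vh_{i,\ep}^{(\be)}(x)|\le C^{|\be|}\,\frac{\ep^{|\be|}\ddot N_{|\be|}}{h_{\dot s}(B_1\ep\,d(x,E))^{n_0}},
\]
where $C$ depends only on $n_0$ (bounding the number of Leibniz terms and the multinomial coefficients). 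Finally, Lemma~\ref{iteration} turns the $n_0$-th power into a single factor: setting $u=B_1\ep\,d(x,E)$, the relation $h_s(u/A)\le h_{\dot s}(u)^{n_0}$ gives $h_{\dot s}(u)^{-n_0}\le h_s(u/A)^{-1}$. Absorbing the harmless constant $C^{|\be|}$ by running the construction with $\ep/C$ in place of $\ep$ (permissible since the assertion is for all $\ep>0$) and adjusting $B_1$ accordingly produces the stated estimate. I expect the single genuine obstacle to be the one already isolated: one must assemble the partition so that the number of $1/h$-factors remains bounded by the covering's overlap constant $n_0$ — this is exactly what forces the product construction over normalisation and what dictates the exponent appearing in Lemma~\ref{iteration}.
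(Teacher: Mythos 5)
Your proposal is correct and follows essentially the same route as the paper's proof: Lemma~\ref{iteration} to obtain $\dot N$ with $h_s(t)\le h_{\dot s}(At)^{n_0}$, Proposition~\ref{testfunctionconstruction} applied to $\dot\nu,\ddot\nu$, rescaling to the balls of Lemma~\ref{balls}, the telescoping product $\vh_{i,\ep}=\th_i\prod_{j<i}(1-\th_j)$, and the observation that at most $n_0$ factors are non-constant so that the power $h_{\dot s}^{-n_0}$ can be converted back to $h_s^{-1}$. The only differences are cosmetic (you absorb the multinomial growth by replacing $\ep$ with $\ep/C$ at the end, where the paper builds the factor $1/n_0$ into the scaling $\ph_{\ep r_i/n_0,c}$ from the start).
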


\begin{proof} 
  By Lemma \ref{iteration}, 
  there is $\dot N \in \fN$ such that its descendant $\dot S$ satisfies \eqref{test2}. 
  There is $\ddot N \in \fN$ such that $\dot \nu_k \lesssim \ddot N_k^{1/k}$
  and $\dot \nu_{2k} \lesssim \ddot \nu_k$. 
  Let $\ph_{\ep,t}$ be the functions from Proposition \ref{testfunctionconstruction} applied to $\dot \nu$ and $\ddot \nu$, 
  in particular, 
  \begin{equation*} 
    |\ph^{(\be)}_{\ep,t}(x)|\le \frac{\ep^{|\be|} \ddot N_{|\be|}}{h_{\dot s}(B\ep(t-1))}.
  \end{equation*}
   Set 
   \[
      \ps_{i,\ep}(x):=\ph_{\ep r_i/n_0,c}\Big(\frac{x-x_i}{r_i}\Big),
   \]
   where $n_0$ and $c$ are the constants from Lemma \ref{balls}, and define 
  \[
      \vh_{1,\ep}:=\psi_{1,\ep}, \quad \vh_{j,\ep}:=\ps_{j,\ep} \prod_{k=1}^{j-1}(1-\psi_{k,\ep}), ~ j\ge 2.
  \] 
  It is easy to check that (1)--(3) are satisfied (cf.\ \cite{ChaumatChollet94} for details). 
  To see (4) observe that, by Lemma \ref{balls},  
  \begin{align*}
    |\ps^{(\be)}_{i,\ep}(x)| \le \frac{(\ep/n_0)^{|\be|} \ddot N_{|\be|}}{h_{\dot s}(B\ep r_i (c-1)/n_0)} 
    \le \frac{(\ep/n_0)^{|\be|} \ddot N_{|\be|}}{h_{\dot s}(B\ep (c-1) (n_0 b)^{-1} d(x,E))}.
  \end{align*}
  Since in the product defining $\vh_{j,\ep}$ at most $n_0$ factors are different from $1$, we get
  \begin{equation*}
    |\vh^{(\be)}_{i,\ep}(x)|\le \frac{\ep^{|\be|} \ddot N_{|\be|}}{h_{\dot s}(B\ep (c-1) (n_0 b)^{-1} d(x,E))^{n_0}}. 
  \end{equation*}
  By \eqref{test2}, we obtain (4) with $B_1 = B(c-1)/(A n_0 b)$.
\end{proof}

\section{The extension theorem}

\subsection{Preliminaries}

Let $E \subseteq \R^n$ be a compact set.
Let $S=(S_k)$ be a weight sequence such that $\si_k^* = \si_k/k$ is increasing and let
$F= (F^\al)_{\al}$ be a Whitney jet of class $\cB^{\{S\}}$ on $E$, i.e.,  
there exist $C>0$ and $\rh \ge 1$ 
such that 
\begin{gather}
  |F^\al(a)| \le C \rh^{|\al|} \,  S_{|\al|}, \quad \al \in \N^n,~ a \in E,
   \label{jets1}
  \\
  |(R^p_a F)^\al(b)| \le C \rh^{p+1} \, |\al|!\, s_{p+1}\,  |b-a|^{p+1-|\al|}, \quad p \in \N,\, |\al| \le p,~ a,b \in E,  
  \label{jets2}
\end{gather}
where $s_k = \si^*_1 \cdots \si^*_k$. The next lemma is straightforward; for details see 
\cite[Proposition 10]{ChaumatChollet94}.

\begin{lemma} \label{proposition10}
  For $a_1,a_2 \in E$, $x \in \R^n$ and $|\al| \le p$,
  \begin{equation}\label{prop101}
    |(T^p_{a_1}F-T^p_{a_2}F)^{(\al)}(x)|\le C (2n^2 \rh)^{p+1}| \al|! \, s_{p+1}(|a_1-x|+|a_1-a_2|)^{p+1-|\al|}.
  \end{equation}
\end{lemma}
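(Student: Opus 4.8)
The plan is to reduce the difference of two Taylor polynomials to a single sum of remainder terms, each governed by the jet estimate \eqref{jets2}, and then to absorb the resulting combinatorial factors into the constant $(2n^2\rh)^{p+1}$. The key structural observation is that $P := T^p_{a_1}F - T^p_{a_2}F$ is a polynomial of degree $\le p$, so $P^{(\al)}$ has degree $\le p-|\al|$ and therefore coincides with its own Taylor expansion about $a_1$ of order $p-|\al|$:
\[
  P^{(\al)}(x) = \sum_{|\ga| \le p-|\al|} \frac{(x-a_1)^\ga}{\ga!}\, P^{(\al+\ga)}(a_1).
\]
Everything then hinges on identifying the coefficients $P^{(\al+\ga)}(a_1)$.

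First I would record two elementary identities obtained by differentiating a Taylor polynomial term by term: for $|\be| \le p$ one has $(T^p_a F)^{(\be)}(a) = F^\be(a)$ (only the constant term of the shifted sum survives), and, directly from the definition of the remainder, $(T^p_{a_2}F)^{(\be)}(b) = F^\be(b) - (R^p_{a_2}F)^\be(b)$ for every $b \in E$. Applying both with $\be = \al+\ga$ (admissible since $|\al+\ga| \le p$) and $b = a_1 \in E$, the two occurrences of $F^{\al+\ga}(a_1)$ cancel, yielding the key identity
\[
  P^{(\al)}(x) = \sum_{|\ga| \le p-|\al|} \frac{(x-a_1)^\ga}{\ga!}\, (R^p_{a_2}F)^{\al+\ga}(a_1).
\]

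Next I would feed in \eqref{jets2}, bounding each $|(R^p_{a_2}F)^{\al+\ga}(a_1)|$ by $C\rh^{p+1}|\al+\ga|!\,s_{p+1}\,|a_1-a_2|^{p+1-|\al|-|\ga|}$ and using $|(x-a_1)^\ga| \le |a_1-x|^{|\ga|}$. Collecting terms according to $\ell = |\ga|$ and evaluating $\sum_{|\ga|=\ell} 1/\ga! = n^\ell/\ell!$ turns the multi-index sum into a single sum over $\ell$ whose coefficient is $\frac{(|\al|+\ell)!}{\ell!}\,n^\ell$; the estimate $\frac{(|\al|+\ell)!}{\ell!} = \binom{|\al|+\ell}{|\al|}|\al|! \le 2^{|\al|+\ell}|\al|!$ then extracts the factor $|\al|!$ required by the statement. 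What remains is the one-line geometric-sum inequality $\sum_{\ell=0}^{q}(2n|a_1-x|)^\ell |a_1-a_2|^{q-\ell} \le (2n)^q(|a_1-x|+|a_1-a_2|)^q$ with $q = p-|\al|$, after which I would compare the accumulated constant $2^{|\al|}(2n)^q = 2^p n^q$ against $(2n^2)^{p+1}$.

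The only real obstacle is this constant bookkeeping. The point is simply to convert the factorial ratio into the clean factor $|\al|!$ while losing no more than a geometric constant per order, and to absorb all powers of $2$ and $n$ into $(2n^2)^{p+1}$; since $|\al| + q = p$, the target constant is a generous over-estimate and no sharpness is needed. Aside from this, the argument is a purely formal manipulation of polynomial coefficients combined with a single invocation of \eqref{jets2}.
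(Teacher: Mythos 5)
Your proof is correct and is essentially the same argument the paper relies on (the paper gives no proof of its own, deferring to \cite[Proposition 10]{ChaumatChollet94}, whose proof rests on exactly your key identity $P^{(\al)}(x)=\sum_{|\ga|\le p-|\al|}\frac{(x-a_1)^\ga}{\ga!}\,(R^p_{a_2}F)^{\al+\ga}(a_1)$ followed by the same combinatorial bookkeeping). Only note the harmless slip in your final display: by \eqref{jets2} the exponent on $|a_1-a_2|$ is $q+1-\ell$ (not $q-\ell$), so the geometric-sum bound should read $\sum_{\ell=0}^{q}(2n|a_1-x|)^\ell|a_1-a_2|^{q+1-\ell}\le(2n)^q(|a_1-x|+|a_1-a_2|)^{q+1}$, after which the accumulated constant $2^{|\al|}(2n)^q=2^p n^q\le(2n^2)^{p+1}$ closes the argument as you indicate.
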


For every $x \in \R^n$ we denote by $\hat x$ some point in $E$ with $|x-\hat x| = d(x,E)$.
For simplicity of notation we shall use the abbreviation $d(x) := d(x,E)$. 
We need a variant of \cite[Proposition 9]{ChaumatChollet94}.

\begin{lemma}\label{proposition9}
Let $S$ and $\dot S$ be weight sequences such that $\si_k^*$ and 
$\dot \si_k^*$ are increasing and
satisfying $\si_{2k} \lesssim \dot \si_k$.
There is a constant $D_1 = D_1(S,\dot S) >1$ such that,
for all Whitney jets $F= (F^\al)_{\al}$ of class $\cB^{\{S\}}$ that satisfy \eqref{jets1} and \eqref{jets2},
all $L \ge D_1 \rh$, all $x \in \R^n$, and $\al\in \N^n$, 
\begin{align}
  |(T_{\hat x}^{2 \Ga_{\dot s} (L d(x))} F)^{(\al)}(x)| &\le C (2L)^{|\al|+1} S_{|\al|},  \label{prop91} 
  \intertext{and, if $|\al| < 2 \Ga_{\dot s}(L d(x))$,}
  |(T_{\hat x}^{2 \Ga_{\dot s}(L d(x))}F)^{(\al)}(x)-F^\al(\hat x)| &\le C (2L)^{|\al|+1} |\al|!\, s_{|\al|+1} d(x).
  \label{prop92}
\end{align}
\end{lemma}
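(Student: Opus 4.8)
The plan is to reduce everything to the pointwise bounds \eqref{jets1}. Writing $p := 2\Ga_{\dot s}(L\, d(x))$, the Taylor polynomial $T^p_{\hat x}F$ is assembled entirely from the values $F^\be(\hat x)$ at the single point $\hat x \in E$, so the Whitney remainder bound \eqref{jets2} will in fact not be needed. First I would differentiate the Taylor polynomial and re-index by setting $\be = \al + \ga$:
\[
  (T^p_{\hat x}F)^{(\al)}(x) = \sum_{|\ga| \le p - |\al|} \frac{(x-\hat x)^\ga}{\ga!}\, F^{\al+\ga}(\hat x).
\]
Applying \eqref{jets1}, using $|(x-\hat x)^\ga| \le d(x)^{|\ga|}$ and the multinomial identity $\sum_{|\ga|=q} 1/\ga! = n^q/q!$, this yields the scalar majorant
\[
  |(T^p_{\hat x}F)^{(\al)}(x)| \le C \sum_{q=0}^{p-|\al|} \frac{n^q}{q!}\, d(x)^q\, \rh^{|\al|+q}\, S_{|\al|+q}.
\]
(If $|\al|>p$ the sum is empty and \eqref{prop91} is trivial; the case $d(x)=0$, i.e.\ $x\in E$, is also trivial since then only the $\ga=0$ term survives.) Everything now comes down to summing this series.

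The key structural input is the hypothesis $\si_{2k}\lesssim\dot\si_k$. Since $s_k = \si^*_1\cdots\si^*_k$ and $\dot s_k = \dot\si^*_1\cdots\dot\si^*_k$, it gives $\si^*_{2k}\lesssim\dot\si^*_k$, so Lemma \ref{claim1} applies to the pair $s,\dot s$ and produces a $\la<1$ with $2\Ga_{\dot s}(t)\le\Ga_s(\la t)$; see \eqref{eq32}. Taking $t=L\,d(x)$ this reads $p\le\Ga_s(\la L\, d(x))$. By the description \eqref{counting2} of $\Ga_s$ (with the sequence $\mu=\si^*$), every index $k\le\Ga_s(\la L\, d(x))$ satisfies $\si^*_k<(\la L\, d(x))^{-1}$, hence $d(x)\,\si^*_k<(\la L)^{-1}$ for all $k\le p$. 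This is the estimate that tames the tail: writing $S_{|\al|+q}/S_{|\al|}=\prod_{j=1}^q\si_{|\al|+j}$ with $\si_{|\al|+j}=(|\al|+j)\si^*_{|\al|+j}$, bounding $(|\al|+q)!/(q!\,|\al|!)\le 2^{|\al|+q}$, and using $\prod_{j=1}^q d(x)\,\si^*_{|\al|+j}\le(\la L)^{-q}$, each term is dominated by $(2\rh)^{|\al|}S_{|\al|}\,(2n\rh/\la L)^q$. Choosing $D_1$ so large that $L\ge D_1\rh$ forces $2n\rh/\la L\le 1/2$, whereupon the geometric series sums to at most $2(2\rh)^{|\al|}S_{|\al|}\le (2L)^{|\al|+1}S_{|\al|}$, giving \eqref{prop91}.

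For \eqref{prop92} I would run the identical computation on the same series with the $q=0$ term removed, since that term is exactly $F^\al(\hat x)$; here the assumption $|\al|<p$ guarantees the remaining sum is nonempty and that all occurring indices still lie in $\{1,\dots,p\}\subseteq\{k\le\Ga_s(\la L\, d(x))\}$. Factoring one power of $d(x)$ and one factor $s_{|\al|+1}=s_{|\al|}\,\si^*_{|\al|+1}$ out of the $q$-th summand leaves only $q-1$ of the small factors $d(x)\,\si^*$, and the same geometric estimate bounds the sum by $4n\rh\,(2\rh)^{|\al|}\,|\al|!\,s_{|\al|+1}\,d(x)$; enlarging $D_1$ so that $4n\rh\le 2L$ absorbs this into $(2L)^{|\al|+1}$. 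The one genuine obstacle is the passage between the two sequences, namely forcing the Taylor degree $p$ (which is measured by $\dot s$) to lie inside the range where $\si^*$ is still small (measured by $s$); this is precisely what $\si_{2k}\lesssim\dot\si_k$ together with Lemma \ref{claim1} delivers, and without it the combinatorial factors $2^{|\al|+q}$ could not be beaten by the $\si^*$-products.
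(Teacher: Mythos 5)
Your proof is correct and takes essentially the same route as the paper's: expand the differentiated Taylor polynomial, bound it via \eqref{jets1}, pass from degree $2\Ga_{\dot s}(Ld(x))$ into the range of $\Ga_s(\la Ld(x))$ by Lemma \ref{claim1}, exploit that $d(x)\,\si^*_k \le (\la L)^{-1}$ for indices up to that point (equivalently, that $k \mapsto s_k t^k$ decreases there), and finish with a geometric series, with the remainder estimate \eqref{jets2} indeed never needed. The only cosmetic differences are your use of the exact multinomial identity and of the quotient description \eqref{counting2} of $\Ga_s$ in place of \eqref{GaProp1}.
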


\begin{proof}
For \eqref{prop91} we may restrict to the case $|\al| \le 2 \Ga_{\dot s}(L d(x))$. By \eqref{jets1},
\begin{align}
  |(T_{\hat x}^{2\Ga_{\dot s}(Ld(x))}F)^{(\al)}(x)|
  &\le\sum_{\substack{\al \le \be\\ |\be|\le2\Ga_{\dot s}(Ld(x))}} 
  \frac{|x-\hat{x}|^{|\be|-|\al|}}{(\be-\al)!} C \rh^{|\be|} S_{|\be|}
  \notag \\
  &\le C |\al|! \sum_{\substack{\al \le \be\\ |\be|\le2\Ga_{\dot s}(Ld(x))}}
  \frac{|\be|!\,(n d(x))^{|\be|-|\al|}}{|\al|!\, (|\be|-|\al|)!}  \rh^{|\be|} s_{|\be|}
  \notag \\
  &\le  \frac{C |\al|! }{(n d(x))^{|\al|}} \sum_{\substack{\al \le \be\\ |\be|\le2\Ga_{\dot s}(Ld(x))}}
  (2n \rh  d(x))^{|\be|} s_{|\be|}
  \notag \\
  &\le \frac{C|\al|! }{(n d(x))^{|\al|}} \sum_{j = |\al|}^{2\Ga_{\dot s}(Ld(x))}
  (2n^2 \rh  d(x))^{j} s_{j},  \label{calculation} 
\end{align}
since the number of $\be \in \N^n$ with $|\be|=j$ is bounded by $n^j$.

The assumption $\si_{2k} \lesssim \dot \si_k$ is equivalent to $\si^*_{2k} \lesssim \dot \si^*_k$.
So, by Lemma \ref{claim1}, 
there is some $\la<1$ such that $2 \Ga_{\dot s}(t) \le \Ga_s(\la t)$ for all $t>0$, and thus
\begin{align*}
  |(T_{\hat x}^{2\Ga_{\dot s}(Ld(x))}F)^{(\al)}(x)|
  &\le \frac{C|\al|! }{(n d(x))^{|\al|}} \sum_{j = |\al|}^{\Ga_s(L\la d(x))}
  (2n^2 \rh  d(x))^{j} s_{j}.  
\end{align*}
By \eqref{GaProp1}, $(L \la d(x))^{j} s_{j}\le (L \la d(x))^{|\al|} s_{|\al|}$ for $|\al|\le j \le \Ga_s(L \la d(x))$,
and hence
\begin{align*}
  |(T_{\hat x}^{2 \Ga_{\dot s}(Ld(x))}F)^{(\al)}(x)|
  &\le C S_{|\al|} \Big(\frac{L \la}{n}\Big)^{|\al|} \sum_{j = |\al|}^{\Ga_s(L\la d(x))}
  \Big(\frac{2n^2 \rh}{L \la } \Big)^{j}.  
\end{align*}
We obtain \eqref{prop91} if $L$ is chosen such that $2n^2\rh /(L\la)\le 1/2$; 
then $D_1= 4n^2/\la$.

For \eqref{prop92} note that, if $|\al| < 2\Ga_{\dot s}(L d(x))$,
\[
  (T_{\hat x}^{2 \Ga_{\dot s}(L d(x))}F)^{(\al)}(x)-F^\al(\hat x) 
  = \sum_{\substack{\al \le \be\\ |\al| < |\be|\le2 \Ga_{\dot s}(Ld(x))}}\frac{(x-\hat{x})^{\be-\al}}{(\be-\al)!} F^\be(\hat x).
\]
Thus the same arguments yield \eqref{prop92}.
\end{proof}

\subsection{The extension theorem}

\begin{theorem}[Extension theorem]\label{extensiontheorem}
Let $\fN$ be an admissible weight matrix, $N \in \fN$, and $S$ the descendant of $N$.
Let $E$ be a compact subset of $\R^n$. 
Then the jet mapping $j^\infty_E : \cB^{\{\fN\}}(\R^n) \to \cB^{\{S\}}(E)$ is surjective.
\end{theorem}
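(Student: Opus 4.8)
The plan is to construct an explicit extension following the scheme of Whitney, Dynkin, and Chaumat--Chollet, adapted to the weight-matrix setting through the partition of unity of Proposition \ref{Proposition6matrix}. Let $F=(F^\al)_\al$ be a Whitney jet of class $\cB^{\{S\}}$ on $E$, so that \eqref{jets1} and \eqref{jets2} hold for some $C>0$ and $\rh\ge 1$. Fix the family of balls $\{B(x_i,r_i)\}_{i\in\N}$ from Lemma \ref{balls} and the associated partition of unity $\{\vh_{i,\ep}\}_{i\in\N}$ from Proposition \ref{Proposition6matrix}, which produces $\ddot N\in\fN$ together with the derivative bound \ref{Proposition6matrix}(4). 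For each $i$ let $\hat x_i\in E$ be a nearest point to $x_i$, and define the extension by $f:=F^0$ on $E$ and
\[
  f(x):=\sum_{i\in\N}\vh_{i,\ep}(x)\,\big(T^{p_i}_{\hat x_i}F\big)(x),\qquad x\in\R^n\setminus E,
\]
where the Taylor degree is chosen adaptively as $p_i:=2\Ga_{\dot s}(L\,d(x_i))$ for a suitable $\dot N\in\fN$ and a large constant $L$ to be fixed. By Lemma \ref{balls}(3) only $n_0$ terms are nonzero at any point, so $f$ is well defined and smooth on $\R^n\setminus E$; the free parameter $\ep$ will be used to control the final bound.

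First I would verify that $f$ is a genuine $C^\infty$-extension with $j^\infty_E(f)=F$. This is a local question at each point of $E$. Using $\sum_i\vh_{i,\ep}\equiv 1$ on $\R^n\setminus E$ together with the remainder estimate \eqref{jets2} and the comparison of Taylor polynomials in Lemma \ref{proposition10}, one shows that for every $\al$ the derivative $\p^\al f(x)$ tends to $F^\al(\hat x)$ as $d(x)\to 0$; the estimate \eqref{prop92} of Lemma \ref{proposition9} supplies the precise rate. A standard Whitney-type argument controlling the increments of $\p^\al f$ across $E$ then yields that $f$ is continuously differentiable to all orders up to $E$ and that $\p^\al f|_E=F^\al$, so $j^\infty_E(f)=F$.

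The heart of the matter is the global derivative estimate, which I expect to be the main obstacle. Fix $x\in\R^n\setminus E$ with projection $\hat x$, and exploit the zero-sum trick: since $\sum_i\vh_{i,\ep}\equiv 1$, for a reference polynomial of suitable degree $p$,
\[
  \p^\be f(x)=\p^\be\big(T^{p}_{\hat x}F\big)(x)
  +\sum_{i}\sum_{\ga\le\be}\binom{\be}{\ga}\vh^{(\ga)}_{i,\ep}(x)\,
  \p^{\be-\ga}\big(T^{p_i}_{\hat x_i}F-T^{p}_{\hat x}F\big)(x).
\]
The first term is bounded directly by \eqref{prop91}. In the double sum the partition-of-unity derivatives are controlled by \ref{Proposition6matrix}(4), which contributes the singular factor $h_s(B_1\ep\,d(x))^{-1}$, while the differences of Taylor polynomials (compared via Lemma \ref{proposition10}, whose adaptive degrees are comparable across overlapping balls because $d(x_i)\sim d(x)$ by Lemma \ref{balls}(2)) contribute a decaying factor $\big(|\hat x_i-x|+|\hat x_i-\hat x|\big)^{p+1-|\be-\ga|}\sim d(x)^{\,p+1-|\be-\ga|}$. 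The crucial cancellation is between this power of $d(x)$ and the singular factor: by \eqref{GaProp2} one has $h_s(t)=s_{\Ga_s(t)}t^{\Ga_s(t)}$, so the choice $p_i\sim 2\Ga_{\dot s}(L\,d(x_i))\sim\Ga_s(c\,d(x))$ makes $d(x)^{\,p_i+1}s_{p_i+1}$ comparable to $h_s(B_1\ep\,d(x))$ and thereby kills the singularity uniformly in $x$.

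Collecting the two contributions yields $|\p^\be f(x)|\le C\,\rh'^{\,|\be|}\,\ddot N_{|\be|}$ for all $\be\in\N^n$ and all $x\in\R^n$, with a new constant $\rh'$. Here the admissibility conditions \ref{admissiblematrix}(4) and \ref{admissiblematrix}(5) are exactly what allow the successive passage from $N$ to $\dot N$ to $\ddot N$ inside $\fN$ while retaining the inequalities $\nu_k\lesssim\dot N_k^{1/k}$ and $\si_{2k}\lesssim\dot\si_k$ required in Proposition \ref{Proposition6matrix} and Lemma \ref{proposition9}. Consequently $f\in\cB^{\{\ddot N\}}(\R^n)\subseteq\cB^{\{\fN\}}(\R^n)$ and $j^\infty_E(f)=F$, which proves surjectivity of $j^\infty_E:\cB^{\{\fN\}}(\R^n)\to\cB^{\{S\}}(E)$. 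The genuine difficulty throughout is the bookkeeping: arranging the adaptively chosen Taylor degree so that the Taylor-difference decay compensates the singular factor $h_s(B_1\ep\,d(x))^{-1}$ \emph{uniformly}, while tuning $\ep$ and $L$ to absorb the accumulated constants and keep the final estimate within the class $\cB^{\{\ddot N\}}$.
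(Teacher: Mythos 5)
Your construction coincides with the paper's own proof: the same extension $f=\sum_i\vh_{i,\ep}\,T^{2\Ga_{\dot s}(Ld(x_i))}_{\hat x_i}F$ built from the partition of unity of Proposition \ref{Proposition6matrix}, the same zero-sum Leibniz decomposition against a reference polynomial $T^{2\Ga_{\dot s}(Ld(x))}_{\hat x}F$, the bound \eqref{prop91} for that reference term, the convergence argument via \eqref{prop92} and \eqref{jets2}, and the same tuning of $\ep$ proportional to $L$ so that one $h$-function factor cancels the singular denominator from \ref{Proposition6matrix}(4). So in outline you have reproduced the paper's argument, and the admissibility conditions enter exactly where you say they do.

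There is, however, one concrete gap. You propose to estimate the differences $T^{p_i}_{\hat x_i}F-T^{p}_{\hat x}F$ ``via Lemma \ref{proposition10}'', arguing that the adaptive degrees are comparable because $d(x_i)\sim d(x)$. But Lemma \ref{proposition10} compares Taylor polynomials of the \emph{same} degree at two base points; it says nothing about a degree mismatch, and comparability of $d(x_i)$ and $d(x)$ does not force $p_i=p$: the counting function $\Ga_{\dot s}$ can jump by arbitrarily large amounts between $La\,d(x)$ and $Lb\,d(x)$, so the mismatch is a genuine term, not an error absorbable into constants. The paper splits $T^{p_i}_{\hat x_i}F-T^{p}_{\hat x}F=H_1+H_2$ with $H_1=T^{p_i}_{\hat x_i}F-T^{p_i}_{\hat x}F$ (equal degrees, Lemma \ref{proposition10} applies) and $H_2=T^{p_i}_{\hat x}F-T^{p}_{\hat x}F$ (equal base points, different degrees); the term $H_2$ requires its own argument, exploiting that it is a polynomial of valuation at least $2\Ga_{\dot s}(Lb\,d(x))$ and degree at most $\Ga_s(\la La\,d(x))$ (here \eqref{GatildeGa} enters), estimated through \eqref{calculation}, \eqref{GaProp1}, \eqref{GaProp2}, and \eqref{mtildem} --- a substantial piece of the proof, not a corollary of Lemma \ref{proposition10}. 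Two further points you gloss over: first, the reason the degree is $2\Ga_{\dot s}$ rather than $\Ga_{\dot s}$ is that \eqref{mtildem} converts $s_{2p+1}$ into $\dot s_p^{\,2}$, producing a \emph{square} of $h$-functions, one factor of which cancels the singularity while the other survives as the decaying factor $h_{\ddot s}(LK_4\,d(x))$ in \eqref{eqclaim2}; mere boundedness of the ratio, which is all your ``comparable to $h_s(B_1\ep\,d(x))$'' mechanism yields, would not give $\p^\al f(x)\to F^\al(a)$ at points of $E$. Second, the claim-type estimates hold only for $d(x)<1$, so one must, as in the paper, multiply by a cut-off of class $\cB^{\{\dddot N\}}$ supported in $\{d(x)<1\}$ before concluding $f\in\cB^{\{\fN\}}(\R^n)$.
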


\begin{proof} 
Let $\ep,L > 0$ be given.
Since $\fN$ is admissible, there exist
\begin{itemize}
  \item $\dot N \in \fN$ such that $\nu_k \lesssim \dot N_k^{1/k}$ and $\nu_{2k} \lesssim \dot \nu_k$,
  \item $\ddot N \in \fN$ such that $\dot \nu_k \lesssim \ddot N_k^{1/k}$ and $\dot \nu_{2k} \lesssim \ddot \nu_k$,
\end{itemize}
and, by 
Lemma \ref{lem:log-convex}, we have $\si^*_{2k+1} \lesssim \si^*_{2k} \lesssim \dot \si^*_k$ and $\dot \si^*_{2k} \lesssim 
\ddot \si^*_k$, 
for the respective descendants. 
Then, by Lemma \ref{claim1} and Remark \ref{rem:claim1}, 
there are constants $B, D\ge 1$ and $\la <1$ such that 
\begin{gather} \label{mtildem}
    s_{2k+1} \le B^{2k+1} \dot s_k^2, \quad  
    \text{ for all } k, \\
    2 \Ga_{\dot s}(t) \le \Ga_s (\la t), \quad \text{ for } t>0, \label{GatildeGa} \\
    h_{\dot s}(t) \le  h_{\ddot s}(Dt)^2, \quad \text{ for } t>0. \label{htildeh}
\end{gather}
Note that $\dot N, \ddot N,  S, \dot S, \ddot S$ and thus also the constants $B,D$ and $\la$ 
only depend on $N$.

Let $\{B(x_i,r_i)\}_{i\in \N}$ be the family of balls provided by Lemma \ref{balls}.
By Proposition \ref{Proposition6matrix}, there is
\begin{itemize}
  \item $\dddot N \in \fN$ and  a collection of $C^\infty$-functions $\{\vh_{i,\ep}\}_{i\in \N}$ 
satisfying \ref{Proposition6matrix}(1)--(3) and
\begin{equation} \label{estvh}
  |\vh^{(\be)}_{i,\ep}(x)| 
  \le \frac{\ep^{|\be|} \dddot N_{|\be|}}{h_{\ddot s}(B_1\ep\, d(x))}, \quad \be \in \N^n,~ x \in \R^n\setminus E,
\end{equation}
for some constant $B_1=B_1(N)$.
\end{itemize}

Let $F = (F^\al)_\al$ be a Whitney jet of class $\cB^{\{S\}}$ on $E$ satisfying \eqref{jets1} and \eqref{jets2}.
We define 
\[
  f(x) := 
  \begin{cases}
    \sum_{i\in \N} \vh_{i,\ep}(x) \, T_{\hat x_i}^{2 \Ga_{\dot s}(L d(x_i)) } F(x),  & \text{ if } x \in \R^n \setminus E, \\
    F^0(x), & \text{ if } x \in  E.
  \end{cases}
\]
Clearly, $f$ is $C^\infty$ in $\R^n \setminus E$.
The theorem will follow from the following claim.

\begin{claim*} 
  There are constants $K_i= K_i(N)$, $i=1,\ldots,4$, such that the following holds.
  If $\ep = K_1 L$ and $L>K_2 \rh$, then for all $x \in \R^n \setminus E$ with $d(x) < 1$ and all $\al \in \N^n$,
  \begin{equation} \label{eqclaim2}
    |\p^\al (f - T_{\hat x}^{2 \Ga_{\dot s}(L d(x)) } F) (x)| \le C (L K_3)^{|\al|+1} \dddot N_{|\al|} h_{\ddot s}(LK_4 d(x)); 
  \end{equation}
  $C$ and $\rh$ are the constants from \eqref{jets1} and \eqref{jets2}.
\end{claim*}

In fact, let us assume that the claim holds.
We may additionally assume that $L \ge D_1 \rh$ for the constant $D_1$ in Lemma \ref{proposition9}. So, by \eqref{prop91} and 
 \eqref{eqclaim2}, for $x \in \R^n \setminus E$ with $d(x)<1$ and $\al \in \N^n$, 
 \begin{align}
    |f^{(\al)}(x)| 
    &\le 
   |(T_{\hat x}^{2 \Ga_{\dot s}(L d(x))} F)^{(\al)}(x)| + |\p^\al (f - T_{\hat x}^{2 \Ga_{\dot s}(L d(x)) } F) (x)|
   \notag \\ \label{final}
   &\le 
    C (L K)^{|\al|+1} \dddot N_{|\al|}
 \end{align}
 for a suitable constant $K=K(n,N)$, because $h_{\ddot s} \le 1$ and $\si \lesssim \dddot \nu$.

 Let us fix a point $a \in E$ and $\al\in \N^n$. 
 Since $\Ga_{\dot s}(t) \to \infty$ as $t \to 0$, we have $|\al| < 2 \Ga_{\dot s}(L d(x))$ if $x \in \R^n \setminus E$ is 
 sufficiently close to $a$. Thus, as $x \to a$, 
 \begin{align*}
   &|f^{(\al)}(x) - F^{\al}(a)|
   \\
   &\le 
   |\p^\al (f - T_{\hat x}^{2 \Ga_{\dot s}(L d(x)) } F) (x)| + 
   |(T_{\hat x}^{2 \Ga_{\dot s}(L d(x))}F)^{(\al)}(x)-F^\al(\hat x)| + |F^\al(\hat x) - F^\al(a)| 
   \\
   & = O(h_{\ddot s}(L K_4 d(x))) + O(d(x)) + O(|\hat x - a|),
 \end{align*}
 by \eqref{jets2}, \eqref{prop92}, and \eqref{eqclaim2}. 
 Hence $f^{(\al)}(x) \to  F^{\al}(a)$ as $x \to a$.
 We may conclude that $f \in C^\infty(\R^n)$.
 After multiplication with a suitable cut-off function of class $\cB^{\{\dddot N\}}$ with support in $\{x : d(x) < 1\}$,
 we find that $f \in \cB^{\{\dddot N\}}(\R^n)$ thanks to \eqref{jets1} and \eqref{final}.
 The result follows.

\bigskip

\paragraph{\emph{Proof of the claim}}

By the Leibniz rule,
\begin{align}
  \p^\al& (f - T_{\hat x}^{2 \Ga_{\dot s}(L d(x)) } F) (x) \notag
  \\
  &=
  \sum_{\be \le \al} \binom{\al}{\be} \sum_i \vh_{i,\ep}^{(\al-\be)}(x) \, 
  \p^\be (T_{\hat x_i}^{2 \Ga_{\dot s}(L d(x_i)) } F - T_{\hat x}^{2 \Ga_{\dot s}(L d(x)) } F) (x). \label{Leibniz}
\end{align}
Let us estimate $\p^\be (T_{\hat x_i}^{2 \Ga_{\dot s}(L d(x_i)) } F - T_{\hat x}^{2 \Ga_{\dot s}(L d(x)) } F) (x) = H_1 + H_2$ 
for $x \in B(x_i,c r_i)$, where 
\begin{align*}
  H_1 &:= \p^\be (T_{\hat x_i}^{2 \Ga_{\dot s}(L d(x_i)) } F - T_{\hat x}^{2 \Ga_{\dot s}(L d(x_i)) } F) (x),
  \\
  H_2 &:= \p^\be (T_{\hat x}^{2 \Ga_{\dot s}(L d(x_i)) } F - T_{\hat x}^{2 \Ga_{\dot s}(L d(x)) } F) (x).
\end{align*}

\medskip

\paragraph{\emph{Estimation of $H_1$}} It suffices to consider $|\be| \le 2 \Ga_{\dot s}(L d(x_i)) =: 2p$. 
By Lemma \ref{proposition10}, 
\begin{align*}
  |H_1| 
  \le C (2n^2 \rh)^{2 p+1} |\be|! \, 
  s_{2 p+1}  (|\hat x_i-x|+|\hat x_i-\hat x|)^{2 p+1-|\be|}.
\end{align*}
By Lemma \ref{balls}(2), for $x \in B(x_i,c r_i)$,
\begin{gather*}
  |\hat x_i-x|\le|\hat x_i-x_i|+|x_i-x|\le d(x_i)+ cr_i \le (1+c/a)d(x_i), \\
  |\hat x_i-\hat x|\le|\hat x_i-x|+|x-\hat x|\le(1+(c+1)/a)d(x_i).
\end{gather*}
If we set $K := 2 (1+(c+1)/a)$ and use \eqref{mtildem}, we obtain 
\begin{align*}
  |H_1| 
  &\le C (2n^2 B \rh)^{2 p+1} 
  |\be|! \, \dot s_{p}^2(K d(x_i))^{2 p+1-|\be|}.
\end{align*} 
Since $h_{\dot s}(L d(x_i)) = \dot s_{p} (L d(x_i))^{p} \le \dot s_{|\be|} (L d(x_i))^{|\be|}$,
by \eqref{GaProp2}, 
and $d(x_i) \le b\, d(x)$, by Lemma \ref{balls}(2),
\begin{align*}
  |H_1| 
  &\le C 2n^2 B K \rh \Big(\frac{2n^2 B K \rh}{L}\Big)^{2 p}  \, 
  b\, d(x)\, |\be|!\,\dot s_{|\be|} L^{|\be|}\, h_{\dot s}(L d(x_i)).
\end{align*}
If $L >  2n^2 B Kb \, \rh$ and $d(x)<1$, then 
\begin{align} \label{H1}
  |H_1| 
  &\le C    
    L^{|\be|+1}  \dot S_{|\be|} \, h_{\dot s}(L d(x_i)).
\end{align}

\medskip

\paragraph{\emph{Estimation of $H_2$}}
Here we differentiate a polynomial $T_{\hat x}^{2 \Ga_{\dot s}(L d(x_i)) } F - T_{\hat x}^{2 \Ga_{\dot s}(L d(x)) } F$
of degree at most $2 \Ga_{\dot s}(La d(x)) \le \Ga_s(L \la a d(x))$, by Lemma \ref{balls}(2) (as $\Ga_{\dot s}$ is decreasing)
and \eqref{GatildeGa}. Again by Lemma \ref{balls}(2), the valuation of the polynomial is at least 
$2 \Ga_{\dot s}(Lb d(x)) =: 2 q$.
Thus, by the calculation in \eqref{calculation}, 
\begin{align*}
  |H_2| 
  &\le \frac{C |\be|!}{(n d(x))^{|\be|}} \sum_{j =2 q }^{\Ga_s(L \la a d(x))}
  (2n^2 \rh  d(x))^{j} s_{j}. 
\end{align*}
By \eqref{GaProp1}, 
$s_j (L \la a d(x))^j \le s_{2 q} (L \la a d(x))^{2 q}$,   
for $j$ in the above sum, and by \eqref{GaProp2}, 
$h_{\dot s}(Lb d(x)) = \dot s_{q} (Lb d(x))^{q} \le \dot s_{|\be|} (Lb d(x))^{|\be|}$.
Hence, using \eqref{mtildem}, we find
\begin{align*}
  |H_2| 
  &\le \frac{C |\be|!}{(n d(x))^{|\be|}} \sum_{j =2 q }^{\Ga_s(L \la a d(x))}
  \Big(\frac{2n^2 \rh}{L\la a} \Big)^{j} s_{2 q} (L \la a d(x))^{2 q}
  \\
  &\le \frac{CB |\be|!}{(n d(x))^{|\be|}} \sum_{j =2 q }^{\Ga_s(L \la a d(x))}
  \Big(\frac{2n^2 \rh}{L\la a} \Big)^{j} \dot s_{q}^2 (B L \la a d(x))^{2 q}
  \\
  &\le  CB \Big(\frac{L b}{n}\Big)^{|\be|} |\be|!\, \dot s_{|\be|}  h_{\dot s}(Lb d(x)) 
  \Big(\frac{\la a}{b}\Big)^{2 q}
  \sum_{j =2 q }^{\Ga_s(L \la a d(x))}
  \Big(\frac{2n^2 B \rh}{L\la a} \Big)^{j}.   
\end{align*}
If we choose $L \ge \frac{4n^2 B \rh}{\la a}$ then the sum is bounded by $2$. 
Let us furthermore assume that $L > 2nB/b$. Then, as $\la<1$, $a<1$, $b>1$,
\begin{align} \label{H2}
  |H_2| 
  &\le  C \Big(\frac{L b}{n}\Big)^{|\be|+1} \dot S_{|\be|}  h_{\dot s}(Lb d(x)).   
\end{align}  

\medskip

Let us finish the proof of the claim. By \eqref{H1} and \eqref{H2}, for $x \in B(x_i,c r_i)$ with $d(x)<1$,
using Lemma \ref{balls}(2) and the fact that $h_{\dot s}$ is increasing,
\begin{align*}
  |\p^\be (T_{\hat x_i}^{2 \Ga_{\dot s}(L d(x_i)) } F - T_{\hat x}^{2 \Ga_{\dot s}(L d(x)) } F) (x)|
  \le C    
    (2bL)^{|\be|+1}  \dot S_{|\be|} \, h_{\dot s }(Lb d(x)).
\end{align*}
Thus, by \eqref{estvh}, \eqref{Leibniz}, and Lemma \ref{balls}(3), 
\begin{align*}
  |\p^\al& (f - T_{\hat x}^{2 \Ga_{\dot s}(L d(x)) } F) (x)| 
  \\
  &\le
  \sum_{\be \le \al} \frac{\al!}{\be!(\al-\be)!} 
  \cdot n_0 \cdot 
   \frac{\ep^{|\al|-|\be|} \dddot N_{|\al|-|\be|}}{h_{\ddot s}(B_1\ep\, d(x))}
  \cdot C    (2bL)^{|\be|+1}  \dot S_{|\be|} \, h_{\dot s}(Lb d(x))
  \\
  &\le  C n_0
  \sum_{j=0}^{|\al|} \frac{|\al|!\, n^{|\al|+j}}{j!(|\al|-j)!}  
   \ep^{|\al|-j} (2bL)^{j+1} \dddot N_{|\al|-j}
        \dot S_{j} \, \frac{h_{\dot s}(Lb d(x))}{h_{\ddot s}(B_1\ep\, d(x))}     
  \\
  &\le  2bL C n_0 n^{|\al|} \dddot N_{|\al|}  \frac{h_{\dot s}(Lb d(x))}{h_{\ddot s}(B_1\ep\, d(x))}
  \sum_{j=0}^{|\al|} \frac{|\al|!\, }{j!(|\al|-j)!}  
   \ep^{|\al|-j} 
      (2bL n A)^{j}
  \\
  &=  2bL C n_0 (n (\ep + 2bL n A))^{|\al|} \dddot N_{|\al|}  \frac{h_{\dot s}(Lb d(x))}{h_{\ddot s}(B_1\ep\, d(x))},            
\end{align*}
since $\dot \si \lesssim \dot \nu  \lesssim \dddot \nu$, whence
$\dot S_j \le A^j \dddot N_j$, and since $\dddot N_{|\al|-j} \dddot N_{j} \le \dddot N_{|\al|}$.
Let us fix $L$, according to the restrictions above, and set $\ep := Lb D/B_1$, where $D$ is the constant from \eqref{htildeh}. 
Then, by \eqref{htildeh}, 
\begin{equation*}
   \frac{h_{\dot s}(Lb d(x))}{h_{\ddot s}(B_1\ep\, d(x))} 
   = \frac{h_{\dot s}(Lb d(x))}{h_{\ddot s}(DLb d(x))} 
   \le  h_{\ddot s}(DLb d(x)),
 \end{equation*} 
 and we obtain \eqref{eqclaim2}. The claim is proved.
\end{proof}

\begin{remark}
  The proof of Theorem \ref{extensiontheorem} shows that for each $\rh>0$ there is a continuous linear extension operator 
  $\cB^S_\rh(E) \to \cB^{\dddot N}_{K\rh}(\R^n)$ for a suitable constant $K$. This extension operator depends on 
  $\rh$ (through $L$ and $\ep$) and in general there is no continuous extension operator 
  $\cB^{\{S\}}(E) \to \cB^{\{\fN\}}(\R^n)$, 
  cf.\ \cite{Petzsche88} and 
  \cite[p.\,223]{schmetsValdivia00}. 
\end{remark}

\subsection{Applications}

\begin{corollary} \label{cor1:extension}
Let $\fN$ be an admissible weight matrix.
Let $\fM$ be a weight matrix 
such that for all $M \in \fM$ there is $N \in \fN$ with 
$\sum_{\ell \ge k} 1/\nu_\ell \lesssim k/ \mu_k$ and $\mu \lesssim \nu$. 
Let $E$ be a compact subset of $\R^n$. 
Then the jet mapping $j^\infty_E : \cB^{\{\fN\}}(\R^n) \to \cB^{\{\fM\}}(E)$ is surjective.
\end{corollary}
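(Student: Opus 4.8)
The plan is to reduce Corollary \ref{cor1:extension} to the Extension Theorem \ref{extensiontheorem} by exploiting the hypotheses on $\fM$ and $\fN$ together with the maximality of the descendant established in Lemma \ref{lem:log-convex}. Since surjectivity of $j^\infty_E : \cB^{\{\fN\}}(\R^n) \to \cB^{\{\fM\}}(E)$ only requires that every Whitney jet of class $\cB^{\{\fM\}}$ on $E$ admit an extension in $\cB^{\{\fN\}}(\R^n)$, and since $\cB^{\{\fM\}}(E) = \on{ind}_{M \in \fM} \cB^{\{M\}}(E)$, it suffices to treat a single $M \in \fM$: I must show that every Whitney jet of class $\cB^{\{M\}}$ on $E$ has an extension in $\cB^{\{\fN\}}(\R^n)$.

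First I would fix $M \in \fM$ and invoke the hypothesis to produce $N \in \fN$ with $\sum_{\ell \ge k} 1/\nu_\ell \lesssim k/\mu_k$ and $\mu \lesssim \nu$. Let $S$ be the descendant of $N$ and $\si$ its generating sequence. The crucial observation is that $\mu$ satisfies exactly the two hypotheses of Lemma \ref{lem:log-convex}(5) relative to $\nu$: indeed $\mu \lesssim \nu$ is one hypothesis, and $\sum_{j \ge k} 1/\nu_j \lesssim k/\mu_k$ is the other. By the maximality asserted in Lemma \ref{lem:log-convex}(5), it follows that $\mu \lesssim \si$. This comparison of the generating sequences is the key link: by \eqref{mucompare} it yields $M_k^{1/k} \le \mu_k \lesssim \si_k$, and since $\si$ is increasing and generates $S$, one gets $M_k^{1/k} \lesssim S_k^{1/k}$. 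Therefore $\cB^{\{M\}} \subseteq \cB^{\{S\}}$ as spaces of jets, by the inclusion criterion for $\cB$-classes recalled in Section \ref{DC}; in particular $\cB^{\{M\}}(E) \subseteq \cB^{\{S\}}(E)$, so every Whitney jet of class $\cB^{\{M\}}$ on $E$ is a fortiori a Whitney jet of class $\cB^{\{S\}}$ on $E$.

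Now I apply Theorem \ref{extensiontheorem} to the admissible weight matrix $\fN$, this very $N \in \fN$, and its descendant $S$: the jet mapping $j^\infty_E : \cB^{\{\fN\}}(\R^n) \to \cB^{\{S\}}(E)$ is surjective. Composing the two facts, any Whitney jet of class $\cB^{\{M\}}$ on $E$ lies in $\cB^{\{S\}}(E)$ and hence is the image under $j^\infty_E$ of some $f \in \cB^{\{\fN\}}(\R^n)$. Ranging over all $M \in \fM$ shows that $j^\infty_E : \cB^{\{\fN\}}(\R^n) \to \cB^{\{\fM\}}(E)$ is surjective, which is the claim.

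The only genuinely substantive step is the invocation of Lemma \ref{lem:log-convex}(5) to obtain $\mu \lesssim \si$; everything else is bookkeeping with the inductive-limit definitions and the inclusion criterion for $\cB$-classes. The main thing to verify carefully is that the hypotheses of the corollary are phrased precisely so as to match items (1)--(2) of the descendant lemma, and that the descendant $S$ used here is the \emph{same} $S$ appearing in Theorem \ref{extensiontheorem} (namely the descendant of the chosen $N \in \fN$), so that no admissibility condition on $\fM$ itself is needed --- admissibility is required only of $\fN$, which is assumed.
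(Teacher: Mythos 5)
Your proposal is correct and follows essentially the same route as the paper: fix $M \in \fM$, pick $N \in \fN$ from the hypothesis, invoke Lemma \ref{lem:log-convex}(5) to get $\mu \lesssim \si$ for the descendant $\si$ of $\nu$, deduce $\cB^{\{M\}}(E) \subseteq \cB^{\{S\}}(E)$, and conclude by Theorem \ref{extensiontheorem}. One small wording point: the inclusion is cleanest obtained by multiplying $\mu_j \le C\si_j$ over $j \le k$ to get $M_k \le C^{k+1} S_k$, hence $M_k^{1/k} \lesssim S_k^{1/k}$; the chain $M_k^{1/k} \le \mu_k \lesssim \si_k$ by itself does not give this, since $S_k^{1/k} \le \si_k$.
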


\begin{proof}
  Let $M \in \fM$ be fixed.
  Lemma \ref{lem:log-convex} implies $\mu \lesssim \si \lesssim \nu$, where $\si$ is the descendant of 
  $\nu$. By Theorem \ref{extensiontheorem}, 
  $j^\infty_E : \cB^{\{\fN\}}(\R^n) \to \cB^{\{S\}}(E)$ is surjective and $\cB^{\{M\}}(E) \subseteq \cB^{\{S\}}(E)$.
\end{proof}

\begin{corollary}[Extension preserving the class] \label{cor2:extension}
Let $\fN$ be an admissible weight matrix such that for all $N \in \fN$ there is $\dot N \in \fN$ with 
$\sum_{\ell \ge k} 1/\dot \nu_\ell \lesssim k/ \nu_k$. 
Let $E$ be a compact subset of $\R^n$. 
Then the jet mapping $j^\infty_E : \cB^{\{\fN\}}(\R^n) \to \cB^{\{\fN\}}(E)$ is surjective.
\end{corollary}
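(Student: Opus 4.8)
The plan is to deduce Corollary~\ref{cor2:extension} directly from Corollary~\ref{cor1:extension} by taking $\fM = \fN$ and verifying that the hypotheses of Corollary~\ref{cor1:extension} are met in this special case. The target surjectivity $j^\infty_E : \cB^{\{\fN\}}(\R^n) \to \cB^{\{\fN\}}(E)$ is exactly the statement of Corollary~\ref{cor1:extension} with $\fM$ replaced by $\fN$, so everything reduces to checking the two conditions required there: for each $M \in \fN$ we must produce some $N \in \fN$ with $\sum_{\ell \ge k} 1/\nu_\ell \lesssim k/\mu_k$ and $\mu \lesssim \nu$.

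First I would take an arbitrary $N \in \fN$ (playing the role of ``$M$'' in the hypotheses of Corollary~\ref{cor1:extension}) and use the assumption of the present corollary to select $\dot N \in \fN$ with $\sum_{\ell \ge k} 1/\dot\nu_\ell \lesssim k/\nu_k$. This $\dot N$ will serve as the ``$N$'' required by Corollary~\ref{cor1:extension}. With these identifications, the first needed condition $\sum_{\ell \ge k} 1/\dot\nu_\ell \lesssim k/\nu_k$ is precisely the hypothesis we just invoked, so it holds by assumption.

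The second condition to verify is $\nu \lesssim \dot\nu$, i.e.\ that the ``smaller'' sequence $\nu$ is dominated by the chosen $\dot\nu$. This should follow from the strong non-quasianalyticity relation itself together with \eqref{mucompare}: since $\sum_{\ell \ge k} 1/\dot\nu_\ell \lesssim k/\nu_k$ and the left-hand side is at least $1/\dot\nu_k$ (taking only the $\ell=k$ term), we get $1/\dot\nu_k \lesssim k/\nu_k$, which only yields $\nu_k \lesssim k\,\dot\nu_k$ and is too weak. The cleaner route is to observe that admissibility condition~\ref{admissiblematrix}(1) forces $\fN$ to be totally ordered in the sense that $\nu \lesssim \dot\nu$ or $\dot\nu \lesssim \nu$; if $\dot\nu \lesssim \nu$ held then $\sum_{\ell\ge k} 1/\nu_\ell \lesssim \sum_{\ell\ge k} 1/\dot\nu_\ell \lesssim k/\nu_k$, which is strong non-quasianalyticity for $N$ itself, so one could simply take $N$ in place of $\dot N$ and the condition $\nu\lesssim\nu$ is trivial; in the complementary case $\nu \lesssim \dot\nu$ holds outright. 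Either way the required domination is secured.

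The main obstacle I anticipate is purely bookkeeping: making sure the role assignments between the two corollaries are consistent, and handling the ordering dichotomy from \ref{admissiblematrix}(1) so that in every case one obtains a single $N' \in \fN$ satisfying \emph{both} $\sum_{\ell \ge k} 1/\nu'_\ell \lesssim k/\nu_k$ and $\nu \lesssim \nu'$ simultaneously. Once that $N'$ is pinned down, Corollary~\ref{cor1:extension} applied with $\fM = \fN$ and this choice of $N'$ gives the surjectivity of $j^\infty_E$ onto $\cB^{\{N\}}(E)$ for each $N$, and taking the union over $N \in \fN$ yields surjectivity onto $\cB^{\{\fN\}}(E)$, completing the proof.
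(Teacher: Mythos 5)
Your proposal is correct and takes essentially the same route as the paper, whose entire proof is the one-line observation that Corollary~\ref{cor2:extension} is the special case $\fM = \fN$ of Corollary~\ref{cor1:extension}. Your additional care with the domination condition $\nu \lesssim \dot\nu$ --- correctly noting that the single-term estimate is too weak and resolving the issue via the ordering dichotomy of Definition~\ref{admissiblematrix}(1), with the fallback $N' = N$ when $\dot\nu \lesssim \nu$ --- soundly fills in a detail that the paper's proof leaves implicit.
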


\begin{proof}
  This is a special case of Corollary \ref{cor1:extension}.
\end{proof}

If $\fN$ consists just of a single weight sequence we recover a slightly sharper version of the result of 
Chaumat and Chollet \cite[Theorem 30]{ChaumatChollet94}.

\begin{corollary}
  Let $N$ be a non-quasianalytic weight sequence of moderate growth. 
  Then the descendant $S$ of $N$ has moderate growth. 
  The mapping $j^\infty_E : \cB^{\{N\}}(\R^n) \to \cB^{\{S\}}(E)$ is surjective for every compact $E \subseteq \R^n$.
\end{corollary}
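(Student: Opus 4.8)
The plan is to deduce both assertions directly from results established earlier, with essentially no new computation. First I would record that the hypotheses on $N$ are exactly the admissibility criterion for a singleton weight matrix: by Example~\ref{single}, a one-element matrix $\{N\}$ is admissible if and only if $N$ is non-quasianalytic and has moderate growth, both of which hold by assumption. Hence $\fN := \{N\}$ is an admissible weight matrix with $N \in \fN$, whose associated descendant is the given $S$.

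For the surjectivity statement I would simply apply Theorem~\ref{extensiontheorem} to this $\fN$: it yields that $j^\infty_E : \cB^{\{\fN\}}(\R^n) \to \cB^{\{S\}}(E)$ is surjective for every compact $E \subseteq \R^n$. Since the inductive limit defining $\cB^{\{\fN\}}$ is taken over the singleton $\fN = \{N\}$, we have $\cB^{\{\fN\}}(\R^n) = \cB^{\{N\}}(\R^n)$, and the asserted surjectivity follows at once.

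For the moderate growth of $S$ I would argue through the characterisations in Lemma~\ref{ChaumatCholletmod}. Moderate growth of $N$ is equivalent to condition~(4) there, i.e.\ $\nu_{2k} \lesssim \nu_k$. I then invoke Lemma~\ref{lem:log-convex}(6) with the choice $\dot N := N$, so that $\dot\si = \si$: its hypothesis $\nu_{2k} \lesssim \dot\nu_k$ becomes precisely $\nu_{2k} \lesssim \nu_k$, and its conclusion $\si_{2k} \lesssim \dot\si_k$ becomes $\si_{2k} \lesssim \si_k$. Because $S$ is itself a weight sequence (the remark following Lemma~\ref{lem:log-convex}), a final application of Lemma~\ref{ChaumatCholletmod}, now with $S$ and $\si$ in the roles of $M$ and $\mu$, turns $\si_{2k}\lesssim\si_k$ back into moderate growth of $S$.

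I expect no genuine obstacle, as everything reduces to bookkeeping with the stated lemmas. The only two points requiring a moment's care are that $\dot N = N$ is a legitimate choice in Lemma~\ref{lem:log-convex}(6)---inspecting that proof, the chain $\dot\ta_k \lesssim \ta_{2k}$ goes through verbatim with $\dot\nu = \nu$, using $\sum_k 1/\nu_k < \infty$, which holds since $N$ is non-quasianalytic---and that one must first know $S$ is a weight sequence before speaking of its moderate growth, which is exactly what the remark after Lemma~\ref{lem:log-convex} supplies.
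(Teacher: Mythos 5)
Your proposal is correct and follows exactly the paper's own route: admissibility of the singleton matrix $\{N\}$ via Example~\ref{single} combined with Theorem~\ref{extensiontheorem} for surjectivity, and Lemma~\ref{ChaumatCholletmod} together with Lemma~\ref{lem:log-convex}(6) applied with $\dot\nu=\nu$ for the moderate growth of $S$. Your write-up merely spells out the bookkeeping (including the check that $S$ is a weight sequence) that the paper's two-line proof leaves implicit.
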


\begin{proof}
  That $S$ has moderate growth follows from Lemma \ref{ChaumatCholletmod} and Lemma \ref{lem:log-convex}(6)
  (applied to $\nu=\dot \nu$). 
\end{proof}

Chaumat and Chollet show that if $M$ is a weight sequence of moderate growth such that $\mu^*$ is increasing 
and $N$ is a non-quasianalytic weight sequence with $\mu \lesssim \nu$ then the following are equivalent:
\begin{itemize}
  \item $j^\infty_E : \cB^{\{N\}}(\R^n) \to \cB^{\{M\}}(E)$ is surjective for every compact $E \subseteq \R^n$. 
  \item $j^\infty_{\{0\}} : \cB^{\{N\}}(\R^n) \to \cB^{\{M\}}(\{0\})$ is surjective. 
  \item $\sum_{\ell \ge k} 1/\nu_\ell \lesssim k/\mu_k$.
\end{itemize}
In the situation of the corollary 
we see, by Lemma \ref{lem:log-convex}(5), that, for arbitrary $E$, $\cB^{\{S\}}(E)$ is the largest space of Whitney jets 
among the $\cB^{\{M\}}(E)$ which is contained in $j^\infty_E \cB^{\{N\}}(\R^n)$.

Let us collect the immediate consequences for classes defined by weight functions.

\begin{corollary}
  Let $\ta$ be an admissible weight function with associated weight matrix $\fT$. 
  Assume that $\om$ is a weight function with associated weight matrix $\fW$ such that 
  for all $W \in \fW$ there is $T \in \fT$ with $\sum_{\ell \ge k} T_{\ell-1}/T_\ell \lesssim k W_{k-1}/ W_k$ 
  and $W_k/W_{k-1} \lesssim T_k/T_{k-1}$. 
  Let $E$ be a compact subset of $\R^n$. 
  Then the jet mapping $j^\infty_E : \cB^{\{\ta\}}(\R^n) \to \cB^{\{\om\}}(E)$ is surjective.
\end{corollary}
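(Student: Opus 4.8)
The plan is to deduce this directly from Corollary \ref{cor1:extension} after translating the weight-function data into weight-matrix language. Concretely, I would apply Corollary \ref{cor1:extension} with $\fN := \fT$ in the role of the admissible target matrix and $\fM := \fW$ in the role of the source matrix of jets. The first point to check is that these matrices are of the required type: since $\ta$ is an admissible weight function, $\fT$ is by definition an admissible weight matrix, so the hypothesis on $\fN$ in Corollary \ref{cor1:extension} is met; and $\fW$, being the weight matrix associated with the weight function $\om$ (Section \ref{associatedmatrix}), is a totally ordered family of weight sequences, hence a weight matrix in the sense required of $\fM$.

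Next I would verify the two comparison hypotheses of Corollary \ref{cor1:extension}. For $M = W \in \fW$ its quotient sequence is $\mu_k = W_k/W_{k-1}$, and for $N = T \in \fT$ its quotient is $\nu_k = T_k/T_{k-1}$; thus $1/\nu_\ell = T_{\ell-1}/T_\ell$ and $k/\mu_k = k\,W_{k-1}/W_k$. Under this dictionary the two standing assumptions of the statement, namely $\sum_{\ell \ge k} T_{\ell-1}/T_\ell \lesssim k\,W_{k-1}/W_k$ and $W_k/W_{k-1} \lesssim T_k/T_{k-1}$, become precisely $\sum_{\ell \ge k} 1/\nu_\ell \lesssim k/\mu_k$ and $\mu \lesssim \nu$. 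Hence for every $W \in \fW$ there is $T \in \fT$ satisfying the hypotheses of Corollary \ref{cor1:extension}, and the corollary yields that $j^\infty_E : \cB^{\{\fT\}}(\R^n) \to \cB^{\{\fW\}}(E)$ is surjective.

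It then remains to identify these matrix classes with the weight-function classes in the statement, and this is where I expect the only (very minor) care to be needed. On the target side, Theorem \ref{representation} gives $\cB^{\{\ta\}}(\R^n) = \on{ind}_{x>0}\cB^{\{T^x\}}(\R^n) = \cB^{\{\fT\}}(\R^n)$; on the source side, $\cB^{\{\om\}}(E)$ is \emph{defined} to be $\cB^{\{\fW\}}(E)$. Substituting these identifications into the surjectivity just obtained gives exactly that $j^\infty_E : \cB^{\{\ta\}}(\R^n) \to \cB^{\{\om\}}(E)$ is surjective. Since all the analytic content is already packaged in Corollary \ref{cor1:extension} (and ultimately in Theorem \ref{extensiontheorem}), there is no genuine obstacle here: the argument is pure bookkeeping, the only subtlety being the notational translation between ratios of consecutive terms $T_{\ell-1}/T_\ell,\ W_{k-1}/W_k$ and the quotient sequences $\nu,\mu$, together with the two identifications of spaces.
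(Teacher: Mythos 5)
Your proof is correct and is exactly the paper's (implicit) argument: the paper states this corollary as an immediate consequence of Corollary \ref{cor1:extension}, obtained by the very translation you give, namely $\fN=\fT$, $\fM=\fW$, $\nu_k=T_k/T_{k-1}$, $\mu_k=W_k/W_{k-1}$, so that the stated hypotheses become $\sum_{\ell\ge k}1/\nu_\ell\lesssim k/\mu_k$ and $\mu\lesssim\nu$. The identifications $\cB^{\{\ta\}}(\R^n)=\cB^{\{\fT\}}(\R^n)$ (Theorem \ref{representation}) and $\cB^{\{\om\}}(E)=\cB^{\{\fW\}}(E)$ (by definition) are invoked correctly, so nothing is missing.
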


\begin{corollary} \label{cor:mixed}
  Let $\om$ be an admissible weight function with associated weight matrix $\fW$ such that 
  for all $W \in \fW$ there is $\dot W \in \fW$ with $\sum_{\ell \ge k} 1/\dot \vt_\ell \lesssim k / \vt_k$. 
  Let $E$ be a compact subset of $\R^n$. 
  Then the jet mapping $j^\infty_E : \cB^{\{\om\}}(\R^n) \to \cB^{\{\om\}}(E)$ is surjective.
\end{corollary}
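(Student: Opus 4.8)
The plan is to reduce the statement to the matrix version already established in Corollary~\ref{cor2:extension}. The key observation is that, by construction, everything in the weight-function setting is merely a shorthand for the corresponding objects attached to the associated weight matrix $\fW = \{W^x\}_{x>0}$, so that the corollary should follow without any new analytic input.

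First I would record the relevant identifications. By Theorem~\ref{representation} we have $\cB^{\{\om\}}(\R^n) = \cB^{\{\fW\}}(\R^n)$ as locally convex spaces, and by the very definition of a Whitney jet of class $\cB^{\{\om\}}$ on $E$ we have $\cB^{\{\om\}}(E) = \cB^{\{\fW\}}(E)$. Hence the jet map $j^\infty_E : \cB^{\{\om\}}(\R^n) \to \cB^{\{\om\}}(E)$ is literally the same map as $j^\infty_E : \cB^{\{\fW\}}(\R^n) \to \cB^{\{\fW\}}(E)$, and it suffices to prove surjectivity of the latter.

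Next I would verify that the hypotheses of Corollary~\ref{cor2:extension} hold for $\fN := \fW$. Since $\om$ is admissible, the associated weight matrix $\fW$ is an admissible weight matrix by the very Definition of an admissible weight function. Moreover, for each weight sequence $W = W^x$ the ratio sequence is precisely $\vt^x$; that is, under the correspondence $\nu \leftrightarrow N$ of Section~\ref{weights} the sequence $\vt$ plays the role of $\nu$ when $N = W$. Therefore the standing assumption that for every $W \in \fW$ there is $\dot W \in \fW$ with $\sum_{\ell \ge k} 1/\dot \vt_\ell \lesssim k/\vt_k$ is exactly the hypothesis $\sum_{\ell \ge k} 1/\dot \nu_\ell \lesssim k/\nu_k$ required in Corollary~\ref{cor2:extension}.

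With these identifications in place the conclusion is immediate: Corollary~\ref{cor2:extension} applied to the admissible weight matrix $\fW$ yields surjectivity of $j^\infty_E : \cB^{\{\fW\}}(\R^n) \to \cB^{\{\fW\}}(E)$, which by the first step is the desired statement. I do not anticipate any genuine obstacle here; the only point requiring care is the purely notational translation between the weight-function language (ratios $\vt$, classes $\cB^{\{\om\}}$) and the weight-matrix language (ratios $\nu$, classes $\cB^{\{\fW\}}$), together with invoking the fact that admissibility of $\om$ is defined precisely so as to make $\fW$ an admissible weight matrix.
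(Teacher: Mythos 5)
Your proposal is correct and is exactly the argument the paper intends: Corollary~\ref{cor:mixed} is stated as an immediate consequence of Corollary~\ref{cor2:extension}, using that admissibility of $\om$ means by definition that $\fW$ is an admissible weight matrix, that $\vt^x$ is the ratio sequence of $W^x$ (so the hypothesis matches that of Corollary~\ref{cor2:extension}), and that $\cB^{\{\om\}}(\R^n)=\cB^{\{\fW\}}(\R^n)$ and $\cB^{\{\om\}}(E)=\cB^{\{\fW\}}(E)$ by Theorem~\ref{representation} and the definition of Whitney jets of class $\cB^{\{\om\}}$. Your write-up simply makes these identifications explicit, which is all the paper's (implicit) proof consists of.
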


\subsection{Characterization of the extension property} \label{ssec:characterization}

In this section we prove a converse to Corollary \ref{cor2:extension}, 
using a result of Schmets and Valdivia \cite{SchmetsValdivia04}.

For weight sequences $M=(M_k)$ and $N=(N_k)$ and positive integers $p$ and $k$ set
\begin{equation*}
  \vh^{M,N}_{p,k} := \sup_{0 \le j < k} \Big(\frac{M_k}{p^k N_j}\Big)^{1/(k-j)}.
\end{equation*}
and consider the condition 
\begin{equation} \label{SV}
     \sum_{j \ge k} \frac{1}{\nu_j} \lesssim \frac{k}{\vh^{M,N}_{p,k}}. 
\end{equation}
Provided that $M \le N$ we have $\vh^{M,N}_{p,k} \le \mu_k$ for every positive integer $p$, indeed,
\[
    \Big(\frac{M_k}{p^k N_j}\Big)^{1/(k-j)} \le \Big(\frac{M_k}{N_j}\Big)^{1/(k-j)} \le (\mu_{j+1} \cdots \mu_k)^{1/(k-j)} 
    \le \mu_k.  
\]
Thus $\sum_{\ell \ge k}  1/\nu_\ell \lesssim k/\mu_k$ implies \eqref{SV} for every 
$p \in \N_{>0}$.  
A partial converse holds for suitable weight matrices.

\begin{lemma} \label{SVgamma1}
  Let $\fN$ be a weight matrix satisfying
  \begin{equation} \label{SVass}
    \A N \in \fN \E \dot N \in \fN : \nu_{k} \lesssim \dot N_k^{1/k}.
  \end{equation} 
  Then the following are equivalent:
  \begin{gather}
    \A N \in \fN \E \dot N \in \fN \E p \in \N_{>0} : 
      \sum_{\ell \ge k} 
     \frac{1}{\dot \nu_\ell} \lesssim \frac{k}{\vh^{N,\dot N}_{p,k}}. \label{SVQ}
     \\
     \A N \in \fN \E \dot N \in \fN :
      \sum_{\ell \ge k}  \frac{1}{\dot \nu_\ell} \lesssim \frac{k}{\nu_k}. \label{CCQ}
  \end{gather}  
\end{lemma}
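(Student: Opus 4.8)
The plan is to prove the two implications separately. The implication \eqref{CCQ} $\Rightarrow$ \eqref{SVQ} is just a quantified version of the elementary computation preceding the lemma, whereas \eqref{SVQ} $\Rightarrow$ \eqref{CCQ} (the \emph{partial converse}) carries the real content and is where the hypothesis \eqref{SVass} is used.

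For \eqref{CCQ} $\Rightarrow$ \eqref{SVQ} I would fix $N \in \fN$ and use \eqref{CCQ} to produce $\dot N \in \fN$ with $\sum_{\ell \ge k} 1/\dot\nu_\ell \lesssim k/\nu_k$. All that the inline estimate needs is a geometric domination $N_k^{1/k} \lesssim \dot N_k^{1/k}$ of the numerator by the denominator (the excess constant being absorbed into $p$); since $\fN$ is totally ordered this can be arranged, if necessary by first enlarging via \eqref{SVass}. The computation before the lemma, applied with numerator sequence $N$ and denominator sequence $\dot N$, then gives $\vh^{N,\dot N}_{p,k} \le \nu_k$ for every $p$, whence $\sum_{\ell \ge k} 1/\dot\nu_\ell \lesssim k/\nu_k \le k/\vh^{N,\dot N}_{p,k}$, which is \eqref{SVQ}.

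The heart of the matter is \eqref{SVQ} $\Rightarrow$ \eqref{CCQ}. The key observation is that, for any weight sequences $M$ and $\dot N$, the single $j=0$ term yields the cheap lower bound
\[
  \vh^{M,\dot N}_{p,k} \ge \Big(\frac{M_k}{p^k \dot N_0}\Big)^{1/k} = \frac{M_k^{1/k}}{p},
\]
since $\dot N_0 = 1$. Taking $M = N$ only reaches $N_k^{1/k}$, which may be strictly smaller than $\nu_k$, so this is too weak; the remedy is exactly \eqref{SVass}. Given $N \in \fN$, I would first pick $\tilde N \in \fN$ with $\nu_k \lesssim \tilde N_k^{1/k}$ and apply \eqref{SVQ} to $\tilde N$ rather than to $N$. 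This produces $\dot N \in \fN$ and $p \in \N_{>0}$ with $\sum_{\ell \ge k} 1/\dot\nu_\ell \lesssim k/\vh^{\tilde N,\dot N}_{p,k}$, and now the bound above reads $\vh^{\tilde N,\dot N}_{p,k} \ge \tilde N_k^{1/k}/p \gtrsim \nu_k/p$. As $p$ is a fixed integer, this gives $k/\vh^{\tilde N,\dot N}_{p,k} \lesssim k/\nu_k$, and therefore $\sum_{\ell \ge k} 1/\dot\nu_\ell \lesssim k/\nu_k$, i.e.\ \eqref{CCQ} with witness $\dot N$.

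I expect the main obstacle to be precisely this converse direction, and specifically the realization that the full supremum defining $\vh^{\tilde N,\dot N}_{p,k}$ is irrelevant: a usable lower bound comes from the endpoint $j=0$ alone, and \eqref{SVass} is the device that upgrades the resulting $N_k^{1/k}$ to $\nu_k$. A secondary technical point, in the forward direction, is the coupling in \eqref{SVQ} which forces the \emph{same} $\dot N$ to appear both in the summability bound and inside $\vh^{N,\dot N}_{p,k}$; this is why one cannot simply take an arbitrary summability witness but must ensure it dominates $N$ geometrically (so that $\vh^{N,\dot N}_{p,k}$ stays $\lesssim \nu_k$), which is where the total order of $\fN$ is needed.
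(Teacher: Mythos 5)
Your proposal is correct and follows essentially the same route as the paper: the forward direction is the inline computation with the constant from $\nu_k \lesssim \dot\nu_k$ absorbed into $p$, and the converse is exactly the paper's argument of first upgrading $N$ via \eqref{SVass}, applying \eqref{SVQ} to the enlarged sequence, and using the $j=0$ term to get $\vh^{\dot N,\ddot N}_{p,k} \ge \dot N_k^{1/k}/p \gtrsim \nu_k/p$. The only cosmetic point is that the geometric domination you arrange in the forward direction is in fact automatic, since $k/\dot\nu_k \le \sum_{\ell \ge k} 1/\dot\nu_\ell \lesssim k/\nu_k$ already yields $\nu_k \lesssim \dot\nu_k$.
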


\begin{proof}
  That \eqref{CCQ} implies \eqref{SVQ} is clear by the arguments above.
  Suppose that \eqref{SVQ} holds and let $N \in \fN$ be fixed. 
  Then, by \eqref{SVass} and \eqref{SVQ}, there exist 
  $\dot N, \ddot N \in \fN$ such that 
  $\nu_{k} \lesssim \dot N_k^{1/k} \le p\, \vh^{\dot N,\ddot N}_{p,k}$ and  
  $\sum_{j \ge k} 1/\ddot \nu_j \lesssim k/\vh^{\dot N,\ddot N}_{p,k}$ which entails \eqref{CCQ}. 
\end{proof}

\begin{proposition} \label{converse}
  Let $\fN$ be an admissible weight matrix. 
  The inclusion $\cB^{\{\fN\}}(\{0\}) 
  \subseteq j^\infty_{\{0\}} \cB^{\{\fN\}}(\{\R\})$ 
  implies \eqref{CCQ}.	
\end{proposition}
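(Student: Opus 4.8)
The plan is to prove the matrix Schmets--Valdivia condition \eqref{SVQ} and then invoke Lemma \ref{SVgamma1}, whose hypothesis \eqref{SVass} is guaranteed by Definition \ref{admissiblematrix}(4). Since $E=\{0\}\subseteq\R$, throughout $n=1$ and jets are indexed by $\N$. For a one-point set the estimate \eqref{jets20} is automatic (with $a=b=0$ the remainder vanishes identically), so $\cB^{\{M\}}(\{0\})$ is exactly the space of sequences $c=(c_k)$ with $|c_k|\le C\rh^{k}M_k$ for some $C,\rh$; thus $\cB^{\{M\}}(\{0\})=\on{ind}_{\rh}\cB^{M}_\rh(\{0\})$, where $\cB^{M}_\rh(\{0\})$ is the Banach space with norm $\sup_k|c_k|/(\rh^{k}M_k)$, and the whole space is a (DFS)-space because the linking maps are compact. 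The Borel map $j:=j^\infty_{\{0\}}$ sends $f$ to $(f^{(k)}(0))_k$ and is continuous into the space of all sequences with its coordinatewise topology.

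The key reduction is a factorization: for each $N\in\fN$ I want a \emph{single} $\dot N\in\fN$ with $\cB^{\{N\}}(\{0\})\subseteq j(\cB^{\{\dot N\}}(\R))$. First I would treat the unit step $B:=\cB^{N}_1(\{0\})$. Fixing a countable cofinal subfamily $\dot N^{(1)}\le\dot N^{(2)}\le\cdots$ of $\fN$, the hypothesis gives $B=\bigcup_{m}A_m$, where $A_m$ is the set of $c\in B$ admitting an extension $f\in\cB^{\dot N^{(m)}}_{m}(\R)$ with $\|f\|^{\dot N^{(m)}}_{m}\le m$. Each $A_m$ is convex and balanced, so $B$ being a Banach (hence Baire) space forces some $\overline{A_{m_0}}$ to have nonempty interior and therefore to contain a ball about $0$. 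A standard open-mapping iteration---summing an absolutely convergent series in the Banach space $\cB^{\dot N^{(m_0)}}_{m_0}(\R)$ and using the coordinatewise continuity of $j$ to identify the jet of the sum---then upgrades this to the genuine inclusion $B\subseteq j(\cB^{\{\dot N\}}(\R))$ with $\dot N:=\dot N^{(m_0)}$.

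To pass from $B=\cB^{N}_1(\{0\})$ to all of $\cB^{\{N\}}(\{0\})$ I would exploit dilation invariance of the target class. Given $c\in\cB^{N}_\rh(\{0\})$, the rescaled jet $\tilde c_k:=\rh^{-k}c_k$ lies in $B$, hence $\tilde c=j(\tilde f)$ for some $\tilde f\in\cB^{\dot N}_\si(\R)$; then $f(x):=\tilde f(\rh x)$ satisfies $f^{(k)}(0)=\rh^{k}\tilde c_k=c_k$ and $|f^{(k)}(x)|\le C(\rh\si)^k\dot N_k$, so $f\in\cB^{\dot N}_{\rh\si}(\R)\subseteq\cB^{\{\dot N\}}(\R)$. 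Thus $c\in j(\cB^{\{\dot N\}}(\R))$, and letting $\rh$ vary yields $\cB^{\{N\}}(\{0\})\subseteq j(\cB^{\{\dot N\}}(\R))$ with the \emph{same} $\dot N$.

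Finally, by total order of $\fN$ I may assume $N\le\dot N$ (if $\dot N\le N$ the two inclusions force $N$ and $\dot N$ to be equivalent and the conclusion is immediate). The inclusion just established means precisely that every Whitney jet in $\cB^{\{N\}}(\{0\})$ extends to a function in $\cB^{\{\dot N\}}(\R)$, i.e.\ the Borel map from $\cB^{\{\dot N\}}(\R)$ onto $\cB^{\{N\}}(\{0\})$ is surjective; the cited theorem of Schmets and Valdivia \cite{SchmetsValdivia04} then yields condition \eqref{SV} for the pair $(N,\dot N)$ and some $p\in\N_{>0}$, which is exactly the inner assertion of \eqref{SVQ}. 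As $N\in\fN$ was arbitrary, \eqref{SVQ} holds, and Lemma \ref{SVgamma1} converts it into \eqref{CCQ}. I expect the main obstacle to be this factorization to a single $\dot N$: the Baire argument only controls a fixed Banach step of the target, and it is the dilation invariance of $\cB^{\{\dot N\}}(\R)$ that promotes it to the entire (DFS)-space without enlarging $\dot N$; one must also ensure a countable cofinal subfamily of $\fN$ is available for the category argument.
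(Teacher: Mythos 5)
Your proposal is correct and follows the same overall route as the paper: reduce to \eqref{SVQ} via Lemma \ref{SVgamma1} (whose hypothesis \eqref{SVass} is Definition \ref{admissiblematrix}(4)), factor the matrix-level hypothesis into a pairwise inclusion $\cB^{\{N\}}(\{0\}) \subseteq j^\infty_{\{0\}}\cB^{\{\dot N\}}(\R)$ for a \emph{single} $\dot N \in \fN$, and feed that pair into the Schmets--Valdivia theorem. The one genuine difference is that the paper outsources the factorization step to ``a simple modification of the proof of \cite[Proposition 3.3]{SchmetsValdivia04}'', whereas you prove it: Baire category on the Banach step $\cB^{N}_1(\{0\})$, the open-mapping iteration (summing an absolutely convergent series of extensions in a fixed Banach step of the target and using coordinatewise continuity of the Borel map), and the dilation trick $f(x)=\tilde f(\rho x)$ to absorb the Roumieu parameter without enlarging $\dot N$; this is essentially the content of the cited proposition, so what your version buys is self-containedness. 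Two loose ends in your write-up are harmless. First, the countable cofinal subfamily you flag as needing justification always exists: a weight matrix is a chain in the pointwise order on $\R^{\N}$, hence order-embeds into $\R$ (e.g.\ via $N \mapsto \sum_k 2^{-k}\arctan N_k$), and every subset of $\R$ has countable cofinality. Second, your final case distinction is unnecessary: the hypothesis $N_k^{1/k} \lesssim \dot N_k^{1/k}$ of the Schmets--Valdivia theorem follows directly from the inclusion you established, since $j^\infty_{\{0\}}\cB^{\{\dot N\}}(\R) \subseteq \cB^{\{\dot N\}}(\{0\})$ and the jets $(\rho^k N_k)_k$ belong to $\cB^{\{N\}}(\{0\})$, so no appeal to the total order (nor to the hand-waved ``equivalence'' case) is needed.
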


\begin{proof}
  By Lemma \ref{SVgamma1}, it suffices to show \eqref{SVQ}. 

  \begin{claim*}
   $\cB^{\{\fN\}}(\{0\}) \subseteq j^\infty_{\{0\}} \cB^{\{\fN\}}(\{\R\})$ if and only if \eqref{SVQ}. 
  \end{claim*}

  We use the following result of Schmets and Valdivia \cite[Theorem 1.1]{SchmetsValdivia04}: 
  \emph{Let $M$ and $N$ be weight sequences such that $M_k^{1/k} \lesssim N_k^{1/k}$ and $N$ is non-quasianalytic. 
     Then $\cB^{\{M\}}(\{0\}) \subseteq j^\infty_{\{0\}} \cB^{\{N\}}(\R)$ if and only if
     \eqref{SV} holds for some $p$.} 
  In \cite{SchmetsValdivia04} the assumptions on $M$ and $N$ are slightly more restrictive, but the same 
  proof yields the result.    

  This result implies the claim, since $\cB^{\{\fN\}}(\{0\}) \subseteq j^\infty_{\{0\}} \cB^{\{\fN\}}(\{\R\})$ entails 
  that for all $N \in \fN$ there is $\dot N \in \fN$ such that 
  $\cB^{\{N\}}(\{0\}) \subseteq j^\infty_{\{0\}} \cB^{\{\dot N\}}(\{\R\})$ 
  which follows from 
  a simple modification of the proof of \cite[Proposition 3.3]{SchmetsValdivia04}.
\end{proof}

\begin{theorem}[Characterization of the extension property] \label{characterization}
Let $\fN$ be an admissible weight matrix. 
The jet mapping $j^\infty_E : \cB^{\{\fN\}}(\R^n) \to \cB^{\{\fN\}}(E)$ is surjective 
for every compact set $E \subseteq \R^n$ if and only if 
\eqref{CCQ}.
\end{theorem}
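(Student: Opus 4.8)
The plan is to prove Theorem \ref{characterization} by combining the forward direction already established in Corollary \ref{cor2:extension} with a converse that reduces the global surjectivity onto arbitrary compact $E$ to the single-point case $E=\{0\}$, where Proposition \ref{converse} applies. Concretely, for the ``if'' direction: assuming \eqref{CCQ}, for every $N \in \fN$ there is $\dot N \in \fN$ with $\sum_{\ell \ge k} 1/\dot\nu_\ell \lesssim k/\nu_k$, which is exactly the hypothesis of Corollary \ref{cor2:extension}, so $j^\infty_E : \cB^{\{\fN\}}(\R^n) \to \cB^{\{\fN\}}(E)$ is surjective for every compact $E$. This direction is essentially immediate.

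For the ``only if'' direction, I would specialize the hypothesis to the point $E = \{0\} \subseteq \R^n$. The surjectivity of $j^\infty_E$ for all compact $E$ in particular gives $\cB^{\{\fN\}}(\{0\}) \subseteq j^\infty_{\{0\}} \cB^{\{\fN\}}(\R^n)$. The main subtlety is a dimension reduction: Proposition \ref{converse} is stated for jets on $\{0\} \subseteq \R$, whereas the theorem concerns $\R^n$. One should check that restricting to functions and jets depending only on the first coordinate (or, dually, restricting along a coordinate axis) shows that $\cB^{\{\fN\}}(\{0\}) \subseteq j^\infty_{\{0\}} \cB^{\{\fN\}}(\R)$ follows from the $n$-dimensional statement; this is a standard and routine reduction since the one-variable jet space embeds into the $n$-variable one and extensions can be taken independent of the remaining variables. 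Once we are in the one-dimensional setting, Proposition \ref{converse} directly yields \eqref{CCQ}.

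The forward-pointing structure is therefore: first dispatch the ``if'' direction via Corollary \ref{cor2:extension}; then, for the converse, extract the inclusion $\cB^{\{\fN\}}(\{0\}) \subseteq j^\infty_{\{0\}} \cB^{\{\fN\}}(\R)$ from the assumed surjectivity (handling the passage from $\R^n$ to $\R$), and finally invoke Proposition \ref{converse} to conclude \eqref{CCQ}. The hardest conceptual point is recognizing that the characterization genuinely reduces to the single-point, one-dimensional case; all the analytic content is already packaged in Proposition \ref{converse} (which in turn rests on the Schmets--Valdivia result) and in Lemma \ref{SVgamma1} relating \eqref{SVQ} and \eqref{CCQ}. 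I do not expect any serious obstacle beyond bookkeeping, since both the sufficiency and the reduction of necessity to $E = \{0\}$ are direct applications of results proved earlier in the excerpt.
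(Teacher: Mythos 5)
Your proposal is correct and takes essentially the same route as the paper, whose entire proof is the one-line citation ``Corollary \ref{cor2:extension} and Proposition \ref{converse}'': sufficiency via Corollary \ref{cor2:extension}, necessity by specializing to $E=\{0\}$ and invoking Proposition \ref{converse}. Your explicit handling of the reduction from $\R^n$ to the one-dimensional setting (embedding a one-variable jet as an $n$-variable jet concentrated on the first coordinate, extending, then restricting to the axis) is a bookkeeping step the paper leaves implicit, and your argument for it is the standard one.
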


\begin{proof}
  Corollary \ref{cor2:extension} and Proposition \ref{converse}.
\end{proof}

For weight functions this implies the following.

\begin{corollary}
	Let $\om$ be an admissible weight function. 
	Then the following are equivalent:
	\begin{enumerate}
	 	\item $j^\infty_E : \cB^{\{\om\}}(\R^n) \to \cB^{\{\om\}}(E)$ is surjective for every compact set $E \subseteq \R^n$. 
	 	\item For all $x>0$ there is $y>0$ such that $\sum_{\ell \ge k} 1/\vt^y_\ell  \lesssim k/\vt^x_k$.
	 \end{enumerate} 
\end{corollary}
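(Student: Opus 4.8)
The plan is to recognize this corollary as the direct specialization of Theorem~\ref{characterization} to the weight matrix associated with $\om$, so that the whole argument reduces to matching up notation. First I would invoke the definition of an \emph{admissible weight function}: by construction $\om$ is admissible precisely when its associated weight matrix $\fW = \{W^x\}_{x>0}$ is an admissible weight matrix in the sense of Definition~\ref{admissiblematrix}. Hence Theorem~\ref{characterization} is applicable with $\fN = \fW$.

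Next I would identify the function and jet spaces. By Theorem~\ref{representation} we have $\cB^{\{\om\}}(\R^n) = \cB^{\{\fW\}}(\R^n)$ as locally convex spaces, and by the definition of Whitney jets of class $\cB^{\{\om\}}$ on a compact set we have $\cB^{\{\om\}}(E) = \cB^{\{\fW\}}(E)$. Consequently the map in statement (1), $j^\infty_E : \cB^{\{\om\}}(\R^n) \to \cB^{\{\om\}}(E)$, is literally the same as $j^\infty_E : \cB^{\{\fW\}}(\R^n) \to \cB^{\{\fW\}}(E)$, and its surjectivity for all compact $E$ is exactly the hypothesis appearing in Theorem~\ref{characterization}.

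Applying that theorem then shows statement (1) is equivalent to condition \eqref{CCQ} for $\fN = \fW$. The final step is purely a translation of quantifiers: the members of $\fW$ are exactly the sequences $W^x$, $x>0$, and for $N = W^x$ the ratio sequence is $\nu = \vt^x$ by the definition $\vt^x_k = W^x_k/W^x_{k-1}$. Thus \eqref{CCQ} reads ``for every $W^x \in \fW$ there is $W^y \in \fW$ with $\sum_{\ell \ge k} 1/\vt^y_\ell \lesssim k/\vt^x_k$'', which is verbatim statement (2). This yields the asserted equivalence.

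I do not expect any genuine obstacle here; the only point requiring a moment of care is that the quantification over $N \in \fW$ in \eqref{CCQ} corresponds exactly to quantification over the parameter $x>0$, which is immediate since $x \mapsto W^x$ exhausts $\fW$. All the analytic content has already been absorbed into Theorem~\ref{characterization} (and, through it, into Corollary~\ref{cor2:extension} and Proposition~\ref{converse}), so this corollary is essentially a restatement in weight-function language.
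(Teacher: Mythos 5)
Your proposal is correct and matches the paper's own (implicit) argument: the corollary is stated there as an immediate specialization of Theorem~\ref{characterization} to the admissible weight matrix $\fW$ associated with $\om$, using exactly the identifications $\cB^{\{\om\}}(\R^n)=\cB^{\{\fW\}}(\R^n)$ and $\cB^{\{\om\}}(E)=\cB^{\{\fW\}}(E)$ and the translation of \eqref{CCQ} into the condition on the sequences $\vt^x$. Your extra care about the quantifier correspondence $N\in\fW \leftrightarrow x>0$ is exactly the only point that needed checking, and it is handled correctly.
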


We want to emphasize that \cite{BBMT91} proved the equivalence of (1) with 
\begin{enumerate}
	\item[(3)] $\int_1^\infty y^{-2}\om(ty) \, dy \le A \om(t) + B$ for positive constants $A,B$, 
\end{enumerate}
for arbitrary weight functions by different methods.

\subsection{A class of admissible weight functions} \label{example}

For $s>1$ consider the weight function (cf.\ \cite[Section 3.10]{Schindl14})
\[
  \om_s(t) := \max\{0,(\log t)^s\}.
\]
Then $\vh_s(t) = t^s$ for $t >0$ and $\vh_s(t) = 0$ for $t \le 0$. 
Let us set $r= s/(s-1)$; then $r>1$ and $r-1 = 1/(s-1)$.
The Young conjugate of $\vh_s$ is $\vh_s^* (t) = C_s \, t^{r}$ where $C_s = (s-1) s^{- r}$.
  The associated weight sequences $(W^{s,\rh}_k)_{k}$, $\rh>0$, are given by
\[
  W^{s,\rh}_k = \exp(C_s \, \rh^{r-1}\, k^{r}).
\]
   
\begin{proposition} \label{prop:omegas}
  Let $s>1$. The weight function $\om_s$ has the following properties.
  \begin{enumerate}
    \item For all $\rh > 0$ we have $\sum_{\ell \ge k} 1/\vt^{s,\rh}_{\ell} \lesssim k/\vt^{s,\rh}_k$ (thus 
    $\si^{s,\rh} \sim \vt^{s,\rh}$ if $\si^{s,\rh}$ denotes the descendant of $\vt^{s,\rh}$). 
    \item Condition \eqref{ugly} holds for $W^{s,\rh}$ if $s \ge 2$ (condition \eqref{om6} holds for no $s>1$),
    \item For all $\rh>0$ and $k \in \N_{>0}$ we have
        $\vt^{s,\rh}_{k+1} \le (W^{s,6\rh}_k)^{1/k}$.
    \item $\int_1^\infty y^{-2} \om_s(ty) \, dy \le A \om_s(t) + B$ for positive constants $A,B$,
    
  \end{enumerate}
  In particular, $\om_s$ is admissible if $s\ge 2$. 
\end{proposition}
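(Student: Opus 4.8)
The plan is to establish the four properties in turn, extracting throughout the elementary asymptotics of the increments and second differences of $x \mapsto x^r$, where $r = s/(s-1) \in (1,\infty)$. Writing $a_\rh := C_s \rh^{r-1}$ one has $\vt^{s,\rh}_k = \exp(a_\rh(k^r - (k-1)^r))$, so that everything is governed by the exponents $b_k := a_\rh(k^r - (k-1)^r)$. The mean value theorem gives $k^r - (k-1)^r = r\xi_k^{r-1}$ with $\xi_k \in (k-1,k)$, whence $k^r - (k-1)^r \sim r k^{r-1}$, while the second difference $(k+1)^r - 2k^r + (k-1)^r \sim r(r-1)k^{r-2}$ stays bounded precisely when $r \le 2$, i.e. when $s \ge 2$ (since $s/(s-1) \le 2 \iff s \ge 2$). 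These two facts drive all four parts. Note also $\sum_k 1/\vt^{s,\rh}_k < \infty$ because $b_k$ grows like $k^{r-1}$ with $r>1$, so the descendant $\si^{s,\rh}$ is defined.

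For (1) I would factor $\sum_{\ell \ge k} 1/\vt^{s,\rh}_\ell = (1/\vt^{s,\rh}_k)\sum_{\ell \ge k} e^{-(b_\ell - b_k)}$ and reduce to showing $\sum_{\ell \ge k} e^{-(b_\ell - b_k)} \lesssim k$. Using the increment asymptotics, $b_\ell - b_k$ is comparable to $\ell^{r-1} - k^{r-1}$ (up to a harmless index shift), so comparing the sum with $\int_k^\infty \exp(-c(t^{r-1} - k^{r-1}))\,dt$ and splitting the range near $t=k$ yields a bound of order $k^{2-r}$. Since $2 - r < 1$ for $r > 1$, this is $\lesssim k$. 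This is \textbf{the main technical obstacle}: in the range $1 < r < 2$ (that is, $s > 2$) the consecutive ratios $\vt^{s,\rh}_{\ell+1}/\vt^{s,\rh}_\ell$ tend to $1$, so no geometric tail estimate is available and the stretched-exponential decay of $1/\vt^{s,\rh}_\ell$ must be exploited through the integral comparison. The equivalence $\si^{s,\rh} \sim \vt^{s,\rh}$ then follows from the Remark after Lemma \ref{lem:log-convex}.

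For (2) I would invoke Lemma \ref{lem:log-convex}(4): \eqref{ugly} holds as soon as $\vt^{s,\rh}_{k+1} \lesssim \vt^{s,\rh}_k$, and this ratio equals $\exp(a_\rh[(k+1)^r - 2k^r + (k-1)^r])$, whose exponent is uniformly bounded exactly when $r \le 2$, i.e. $s \ge 2$. That \eqref{om6} fails for every $s > 1$ is immediate: dividing $2(\log t)^s \le (\log(Ht))^s + H = (\log t + \log H)^s + H$ by $(\log t)^s$ and letting $t \to \infty$ would force $2 \le 1$. For (3) I would compare exponents: the assertion is equivalent to $(k+1)^r - k^r \le 6^{r-1}k^{r-1}$, and the mean value theorem together with $k+1 \le 2k$ gives $(k+1)^r - k^r \le r(k+1)^{r-1} \le r\,2^{r-1}k^{r-1}$, so it remains to check $r\,2^{r-1} \le 6^{r-1}$, i.e. $r \le 3^{r-1}$. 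Since $g(r) := 3^{r-1} - r$ satisfies $g(1) = 0$ and $g'(r) = 3^{r-1}\log 3 - 1 > 0$ for $r \ge 1$, this holds for all $r \ge 1$; the constant $6$ is chosen precisely to make this work.

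For (4), substituting $u = \log y$ turns the integral into $\int_0^\infty e^{-u}(\log t + u)^s\,du$ for $t \ge 1$; the elementary inequality $(x+y)^s \le 2^{s-1}(x^s + y^s)$ and the finiteness of $\int_0^\infty e^{-u}u^s\,du$ then give the bound with $A = 2^{s-1}$ and $B = 2^{s-1}\int_0^\infty e^{-u}u^s\,du$, the case $t < 1$ contributing only to $B$. Finally, for the admissibility statement when $s \ge 2$ I would verify the conditions singled out in the definition of an admissible weight function: non-quasianalyticity follows from $\int_1^\infty t^{-2}\om_s(t)\,dt = \int_0^\infty e^{-u}u^s\,du < \infty$ (again via $u = \log t$), while \ref{admissiblematrix}(3) and (4) are exactly parts (2) and (3) above; the remaining conditions of Definition \ref{admissiblematrix} hold automatically for weight matrices associated with weight functions. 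Note that property (4) of the proposition is the Bonet--Braun--Meise--Taylor-type condition and is recorded separately rather than being needed for admissibility.
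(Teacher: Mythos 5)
Your proof is correct, and in parts (2) and (3) it follows essentially the same route as the paper: bounded second differences of $x\mapsto x^r$ when $r\le 2$ (combined with the ``in particular'' clause of Lemma \ref{lem:log-convex}(4)), and the mean value theorem plus an elementary inequality calibrated so that the constant $6$ suffices (the paper uses $r\le e^{r-1}$ and $2e\le 6$ where you use $r\le 3^{r-1}$; same substance). Where you genuinely diverge is in parts (1) and (4). For (1) the paper shows $\vt^{s,\rh}_{2k}/\vt^{s,\rh}_{k}\to\infty$ and then invokes Petzsche's Proposition 1.1 to convert this ratio condition into strong non-quasianalyticity; you instead prove the tail estimate directly, factoring out $1/\vt^{s,\rh}_k$ and comparing with $\int_k^\infty \exp\bigl(-c(t^{r-1}-k^{r-1})\bigr)\,dt\lesssim \max(1,k^{2-r})\le k$. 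This makes the proposition self-contained and quantitative, at the cost of a longer computation; one detail to handle cleanly is the term $\ell=k$ after your ``harmless index shift'': the crude lower bound $b_\ell-b_k\ge c\bigl((\ell-1)^{r-1}-k^{r-1}\bigr)$ applied at $\ell=k$ produces a spurious factor $e^{c(k^{r-1}-(k-1)^{r-1})}$, unbounded when $r>2$, so that term should simply be bounded by $1$ and the estimate applied only for $\ell\ge k+1$. For (4) the paper cites Komatsu and Rainer--Schindl and mentions the direct route only as an alternative, whereas you carry out that direct computation via $u=\log y$ and $(x+y)^s\le 2^{s-1}(x^s+y^s)$. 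You also prove two things the paper's proof leaves implicit: the failure of \eqref{om6} for every $s>1$, and the explicit assembly of non-quasianalyticity with parts (2) and (3) into admissibility for $s\ge 2$. Net: both arguments are valid; the paper's is shorter by outsourcing (1) and (4) to the literature, while yours is elementary and self-contained throughout.
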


\begin{proof}   
  The function $f(x) = x^r$ is increasing on $(0,\infty)$ with increasing 
    derivative $f'(x) = r x^{r-1}$. Thus $f'(k) \le f(k+1)-f(k) \le f'(k+1)$, i.e.,
    \begin{equation} \label{growthf}
      r k^{r-1} \le (k+1)^r-k^r \le r(k+1)^{r-1}.
    \end{equation}

  (1) By \eqref{growthf}, 
    \begin{align*}
      \frac{\vt^{s,\rh}_{2k}}{\vt^{s,\rh}_{k}} = \exp\big(C_s \rh^{r-1} (2^r-1) (k^{r} - (k-1)^r)\big) \to \infty
      \quad \text{ as $k \to \infty$,}
    \end{align*}
    which implies (1) by \cite[Proposition 1.1]{Petzsche88}. 

  (2) By (1), \eqref{ugly} for $W^{s,\rh}$ is equivalent to $\vt^{s,\rh}_{k+1} \lesssim \vt^{s,\rh}_{k}$.
  We have $s \ge 2$ if and only if $1 < r \le 2$. 
  Then the function $f'$ is concave on $(0,\infty)$ since $f'''(x) = r(r-1)(r-2) x^{r-3} \le 0$. 
  Thus (by a look at its derivative) the function $(x+1)^r + (x-1)^r - 2 x^r$ is decreasing, which implies 
  $\vt^{s,\rh}_{k+1} \lesssim \vt^{s,\rh}_{k}$. 

  (3) By \eqref{growthf},
  \begin{align*}
    \vt^{s,\rh}_{k+1} &=\exp\big(C_s \rh^{r-1} ((k+1)^{r} - k^{r})\big) 
     \le \exp\big(C_s r \rh^{r-1} (k+1)^{r-1}\big)
    \\
    & \le \exp\big(C_s (2 e \rh)^{r-1} k^{r-1}\big) \le (W^{s,6\rh}_k)^{1/k}.
    \end{align*}

    (4) This follows from (1) in view of \cite[Proposition 4.4]{Komatsu73} and \cite[Lemma 5.7]{RainerSchindl12}.
    Alternatively, it can easily be seen by checking some equivalent condition from \cite[Theorem 1.7]{BBMT91}, or directly 
    by using the asymptotic behavior of the incomplete Gamma function.
\end{proof}

Since each $\om_s$ violates \eqref{om6}, and thus the corresponding class cannot be described by a single weight sequence, 
the extension property does not follow from the result of Chaumat and Chollet. 
  Our results imply that
  the jet mapping  
  $j^\infty_E : \cB^{\{\om_s\}}(\R^n) \to \cB^{\{\om_s\}}(E)$ is surjective for every compact subset $E \subseteq \R^n$ 
  provided that $s\ge 2$. However, by \cite{BBMT91} it is so also for $1< s< 2$.


\def\cprime{$'$}
\providecommand{\bysame}{\leavevmode\hbox to3em{\hrulefill}\thinspace}
\providecommand{\MR}{\relax\ifhmode\unskip\space\fi MR }
\providecommand{\MRhref}[2]{%
  \href{http://www.ams.org/mathscinet-getitem?mr=#1}{#2}
}
\providecommand{\href}[2]{#2}

\end{document}